\documentclass[11pt]{article}
\usepackage[utf8]{inputenc}
\usepackage[a4paper,top=2.5cm,bottom=2cm,left=2cm,right=2cm,marginparwidth=1.5cm]{geometry}
\usepackage{amsmath}
\usepackage{amssymb}
\numberwithin{equation}{section}
\usepackage[english]{babel}
\usepackage{amsthm}
\usepackage{bbm}
\usepackage{amsfonts}
\usepackage{comment}
\usepackage{mathrsfs}
\usepackage{hyperref}
\usepackage{titlesec}

\setcounter{secnumdepth}{4}
\hypersetup{
    colorlinks=true,
    linkcolor=blue,
    citecolor=red,
    filecolor=magenta,      
    urlcolor=black,
    pdftitle={Periodic systems of subwavelength halide perovskite resonators},
    pdfpagemode=FullScreen,
    }
\usepackage{graphicx, tikz}
\usepackage{float}
\usepackage{xcolor}
\usepackage{bbm}
\usepackage[format=plain,
            font=it]{caption}

\newtheorem{theorem}{Theorem}[section]
\newtheorem{corollary}{Corollary}[theorem]
\newtheorem{lemma}[theorem]{Lemma}
\newtheorem{proposition}[theorem]{Proposition}
\newtheorem*{remark}{Remark}
\newtheorem*{notation}{Notation}
\newtheorem{definition}[theorem]{Definition}

\newcommand{\upd}{\mathrm{d}}  

\def\a{\alpha }     \def\b{\beta  }         \def\g{\gamma }
     \def\d{\delta}          \def\D{\Delta }
    \def\ve{\varepsilon}    
               
\def\k{\kappa }          \def\l{\lambda }    
\def\L{\Lambda }    \def\m{\mu}             \def\n{\nu}         
    \def\r{\rho}            \def\s{\sigma }     
     \def\t{\tau }

       \def\w{\omega }         \def\W{\Omega }

\newcommand{\axiscol}{gray}

\title{The effect of singularities and damping on the spectra of photonic crystals}
\author{Konstantinos Alexopoulos\thanks{Department of Mathematics, ETH Zurich, R\"amistrasse 101, CH-8092 Zurich, Switzerland.} \and Bryn Davies\thanks{Department of Mathematicsbb, Imperial College London, 180 Queen's Gate, London SW7 2AZ, United Kingdom.}}
\date{}
\begin{document}

\maketitle

\begin{abstract}
    Understanding the dispersive properties of photonic crystals is a fundamental and well-studied problem. However, the introduction of singular permittivities and damping complicates the otherwise straightforward theory. In this paper, we study photonic crystals with a Drude-Lorentz model for the permittivity, motivated by halide perovskites. We demonstrate how the introduction of singularities and damping affects the spectral band structure and show how to interpret the notion of a ``band gap" in this setting. We present explicit solutions for a one-dimensional model and show how integral operators can be used to handle multi-dimensional systems.
\end{abstract}

\tableofcontents

\section{Introduction}



Photonic crystals present interesting and useful wave properties. Even very simple photonic crystals, such as those composed of periodically alternating layers of non-dispersive materials, can display exotic dispersive properties. As a result, they are able to support band gaps: ranges of frequencies that are unable to propagate through the material \cite{soukoulis2012photonic}. These band gaps are the fundamental building blocks of the many different wave guides and wave control devices that have been conceived. Notable examples include flat lenses \cite{pendry2000negative}, invisibility cloaks \cite{milton2006cloaking}, rainbow trapping filters \cite{tsakmakidis2007trapped} and topological waveguides \cite{khanikaev2013photonic}.

When working at certain electromagnetic frequencies (which often includes the visible spectrum), it is important to take into account the oscillatory behaviour of the free electrons in a metal. This behaviour leads to resonances at characteristic frequencies and gives metals a highly dispersive character (even before the introduction of macroscopic structure, as in a photonic crystal). Several different models exist to describe this behaviour. Most models are variants of the Lorentz oscillator model, whereby electrons are modelled as damped harmonic oscillators due to electrostatic attractions with nuclei \cite{maier2007plasmonics}. A popular special case of this is the Drude model, in which case the restoring force is neglected (to reflect the fact that most electrons in metals are not bound to any specific nucleus, so lack a natural frequency of oscillation). Many other variants of these models exist, for instance by adding or removing damping from the various models, \emph{cf.} \cite{schulkin2004analytical} or \cite{li2003photonic}, and by taking linear combinations of the different models, as in \cite{sehmi2020applying}. 

A key feature that unites dispersive permittivity models is the existence of singularities in the permittivity. The position of these poles in the complex plane, which correspond to resonances, are one of the crucial properties that determines how a metal interacts with an electromagnetic wave. In conventional Lorentz models the poles appear in the lower complex plane \cite{maier2007plasmonics}. The imaginary part of the singular frequency is determined by the magnitude of the damping, and the singularities accordingly fall on the real line if the damping is set to zero. In the Drude model, the removal of the restorative force causes the singularities to fall at the origin and on the negative imaginary axis. 


A particularly important example of dispersive materials, that are central to the motivation for this study, are halide perovskites. They have excellent optical and electronic properties and are cheap and easy to manufacture at scale  \cite{MFTHBPZK}. As a result, they are being used in many applications, including optical sensors \cite{GPLLZZSQKJ}, solar cells \cite{S} and light-emitting diodes \cite{WKYZZPLBHL}. The dielectric permittivity of halide perovskites has been shown to depend heavily on excitonic transitions, leading to a permittivity that has symmetric poles in the lower complex plane \cite{MFTHBPZK}.



There are a range of methods that can be used to capture the spectra of photonic crystals. For one-dimensional systems, explicit solutions typically exist and transfer matrices are particularly convenient. These were used for Drude materials in \cite{sigalas1995metallic} and for undamped Lorentz materials in \cite{li2003photonic}, for example. In multiple dimensions, studies often resort to numerical simulation (for instance with finite elements). A valuable approximation strategy is a multi-scale asymptotic method known as high-frequency homogenisation \cite{craster2010high}, which can be extended to approximate the dispersion curves in dispersive media \cite{touboul2023dispersive}.

In this work, we will study photonic crystals composed of metals with permittivity inspired by that of halide perovskites, in the sense that it has symmetric poles in the lower complex plane. After setting out the Floquet-Bloch formulation of the periodic problem in section~\ref{sec:prob}, we will study the one-dimensional periodic Helmholtz problem in section~\ref{sec:1d}. We retrieve the dispersion relation which characterizes the halide perovskite system and show how its properties depend on the characteristics of the dispersive permittivity (namely, being real or being complex and having poles either on or below the real axis). Finally, in section~\ref{sec:multid}, we use integral methods and asymptotic analysis in order to obtain the dispersion relation for the two- and three-dimensional cases, showing how to extend our analysis to multi-dimensional photonic crystals.


\section{Problem setting} \label{sec:prob}

\subsection{Initial Helmholtz formulation}

Let us consider $N\in\mathbb{N}$ particles $D_1,D_2,\dots,D_N$ which together occupy a bounded domain $\Omega \subset \mathbb{R}^d$, for $d\in \{1,2,3\}$. The collection of particles $\Omega$ will be the repeating unit of the periodic photonic crystal. We suppose that permittivity of the particles is given by a Drude--Lorentz-type model, given by
\begin{align}\label{prmtvt}
    \ve(\w) = \ve_0 + \frac{\a}{1 - \b\w^2 - i \g \w},
\end{align}
where $\ve_0$ denotes the background dielectric constant and $\a,\b,\g$ are positive constants. $\a$ describes the strength of the interactions, $\b$ determines the natural resonant frequency and $\g$ is the damping factor. This is motivated by the measured permittivity of halide perovskites, as reported in \cite{MFTHBPZK}. We choose to use this expression as a canonical model for dispersive materials whose permittivities have singularities in the complex frequency space. Notice that \eqref{prmtvt} is singular at two complex values of $\omega$. These are given by
\begin{equation} \label{singularities}
    \omega^*_{\pm}= \frac{1}{2\b}\Big( -i\g \pm \sqrt{4\b-\g^2} \Big).
\end{equation}
By varying the parameters $\alpha$, $\beta$ and $\gamma$ we can force these singularities to lie in the lower half of the complex plane ($\gamma>0$), on the real line ($\gamma=0$) or to vanish completely ($\beta=\gamma=0$). We will make use of this property when trying to interpret the dispersion diagrams we obtain in the following analysis. We suppose that the particles are surrounded by a non-dispersive medium with permittivity $\ve_0$. We assume that the particles are non-magnetic, meaning the magnetic permeability $\m_0$ is constant on all of $\mathbb{R}^d$. 

We consider the Helmholtz equation as a model for the propagation of time-harmonic waves with frequency $\w$. This is a reasonable model for the scattering of transverse magnetic polarised light (see \emph{e.g.} \cite[Remark~2.1]{moiola2019acoustic} for a discussion). The wavenumber in the background $\mathbb{R}^d \setminus \overline{\W}$ is given by $k_0:=\w\ve_0\m_0$ and we will use $k$ to denote the wavenumber within $\W$. Let us note here that, from now on, we will suppress the dependence of $k_0$ and $k$ on $\w$ for brevity.  We, then, consider the system of equations
\begin{align}\label{Helmholtz problem}
    \begin{cases}
    \D u + \w^2 \ve(\w)\m_0 u = 0 \ \ \ &\text{ in } \W, \\
    \D u + k_0^2 u = 0 &\text{ in } \mathbb{R}^d\setminus\overline{\W}, \\
    u|_+ - u|_-=0 &\text{ on } \partial\W, \\
    \frac{\partial u}{\partial \n}|_+ - \frac{\partial u}{\partial \n}|_- = 0 &\text{ on } \partial\W, \\
    u(x) - u_{in}(x) &\text{ satisfies the outgoing radiation condition as } |x|\to \infty,\\
    \end{cases}
\end{align}
where $u_{in}$ is the incident wave, assumed to satisfy $(\D + k_0^2)u_{in}=0$, and the appropriate outgoing radiation condition depends on the dimension of the problem and of the periodic lattice.

\subsection{Periodic formulation}

We will assume that the collection of $N$ particles is repeated in a periodic lattice $\Lambda$. We suppose that the lattice has dimension $d_l$, in the sense that there are lattice vectors $l_1,\dots,l_{d_l}\in\mathbb{R}^d$ which generate $\Lambda$ according to
\begin{align}\label{L}
    \L := \{ m_1 l_1 + \dots + m_{d_l} l_{d_l} \ | \ m_i \in \mathbb{Z} \}.
\end{align}
The fundamental domain of the lattice $\L$ is the set $Y\in\mathbb{R}^{d}$ given by
\begin{align}
    Y := \{ c_1 l_1 + \dots + c_{d_l} l_{d_l} \ | \ 0 \leq c_1,\dots,c_{d_l} \leq 1 \}.
\end{align}
The dual lattice of $\Lambda$, denoted by $\Lambda^*$, is generated by the vectors $\a_1,\dots,\a_{d_l}$ satisfying $\a_i \cdot l_j = 2 \pi \d_{ij}$ for $i,j=1,\dots,d_l$. Finally, the \emph{Brillouin zone} $Y^*$ is defined by
\begin{equation}
    Y^* := (\mathbb{R}^{d_l} \times \{\mathbf{0}\}) / \L^*,
\end{equation}
where $\mathbf{0}$ is the zero vector in $\mathbb{R}^{d-d_l}$. The Brillouin zone $Y^*$ is the space that the reduced unit cell of reciprocal space.

\begin{figure}
\begin{center}
\includegraphics[scale=1.5]{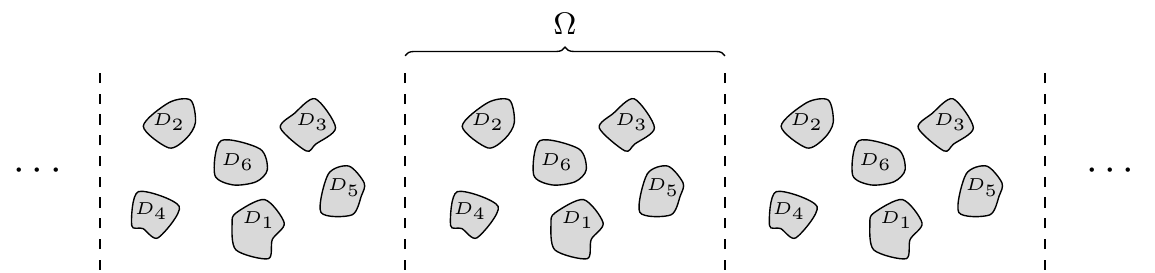}  
\end{center}
\caption{A periodic array of halide perovskite particles. Here, we have six particles $D_1,\dots,D_6$ repeated periodically in one dimension. Each of them has the halide perovskite permittivity $\ve(\w)$ defined by \eqref{prmtvt}.} \label{fig:periodic}
\end{figure}

The periodic structure, denoted by $\mathcal{D}$, is given by
\begin{align*}
    \mathcal{D} = \bigcup_{i=1}^N \left( \bigcup_{m\in\L} D_i + m \right).
\end{align*}
Hence, the problem we wish to study is the following:
\begin{align}\label{Periodic Helmholtz problem}
    \begin{cases}
    \D u + \w^2 \ve(\w)\m_0 u = 0 \ \ \ &\text{ in } \mathcal{D}, \\
    \D u + k_0^2 u = 0 &\text{ in } \mathbb{R}^d\setminus\overline{D}, \\
    u|_+ - u|_-=0 &\text{ on } \partial\mathcal{D}, \\
    \frac{\partial u}{\partial \n}|_+ - \frac{\partial u}{\partial \n}|_- = 0 &\text{ on } \partial\mathcal{D}, \\
    u(x_l,x_0) &\text{ satisfies the outgoing radiation condition as } |x_0|\to\infty.\\
    \end{cases}
\end{align}

\subsection{Floquet-Bloch theory}

In order to study the problem \eqref{Periodic Helmholtz problem}, we will make use of Floquet-Bloch theory \cite{kuchment2016overview}. Let us first give certain definitions which will help with the analysis of the problem. 

\begin{definition}
    A function $f(x)\in L^2(\mathbb{R}^d)$ is said to be $\k$-quasiperiodic, with quasiperiodicity $\k\in Y^*$, if $e^{-i\k\cdot x}f(x)$ is $\L$-periodic.
\end{definition}

\begin{definition}[Floquet transform]
    Let $f\in L^2(\mathbb{R}^d)$. The Floquet transform of $f$ is defined as
    \begin{align*}
        \mathcal{F}[f](x,\k) := \sum_{m\in\L} f(x-m) e^{-i\k\cdot x}, \ \ \ x\in\mathbb{R}^d, \ \k\in Y^*.
    \end{align*}
\end{definition}
We have that $\mathcal{F}[f]$ is $\k$-quasiperiodic in $x$ and periodic in $\k$. The Floquet transform is an invertible map $\mathcal{F}: L^2(\mathbb{R}^d)\rightarrow L^2(Y \times Y^*)$, with inverse given by
\begin{align*}
    \mathcal{F}^{-1}[g](x) = \frac{1}{|Y^*_l|} \int_{Y^*} g(x,\k) \upd \k, \ \ \ x\in\mathbb{R}^d,
\end{align*}
where $g(x,\k)$ is extended quasiperiodically for $x$ outside of the unit cell $Y$.

Let us define $u^{\k}(x):=\mathcal{F}[u](x,\k)$. Then, applying the Floquet transform to \eqref{Periodic Helmholtz problem}, we obtain the following system:
\begin{align}\label{Quasiperiodic Helmholtz problem}
    \begin{cases}
    \D u^{\k} + \w^2 \ve(\w)\m_0 u^{\k} = 0 \ \ \ &\text{ in } \mathcal{D}, \\
    \D u^{\k} + k_0^2 u^{\k} = 0 &\text{ in } \mathbb{R}^d\setminus\overline{D}, \\
    u^{\k}|_+ - u^{\k}|_-=0 &\text{ on } \partial\mathcal{D}, \\
    \frac{\partial u^{\k}}{\partial \n}|_+ - \frac{\partial u^{\k}}{\partial \n}|_- = 0 &\text{ on } \partial\mathcal{D}, \\
    u^{\k}(x_d,x_0) &\text{ is $\k$-quasiperiodic in $x_d$}, \\
    u^{\k}(x_d,x_0) &\text{ satisfies the $\k$-quasiperiodic radiation condition as } |x_0|\to\infty.\\
    \end{cases}
\end{align}

The solutions to \eqref{Quasiperiodic Helmholtz problem} typically take the form of a countable collection of spectral bands, each of which depends continuously on the Bloch parameter $\k$. The goal of our analysis is identifying and explaining the gaps between the spectral band. At frequencies within these band gaps, waves do not propagate in the material and their amplitude decays exponentially. As a result, they are the starting point for building waveguides and other wave control devices.

For real-valued permittivities, it is straightforward to define band gaps as the intervals between the real-valued bands:


\begin{definition}[Band gap for real permittivities] \label{BGreal}
    A frequency $\omega\in\mathbb{R}$ is said to be in a band gap of the periodic structure $\mathcal{D}$ if it is such that \eqref{Quasiperiodic Helmholtz problem} does not admit a non-trivial solution for any $\k\in\mathbb{R}$.
\end{definition}

We are interested in materials for which the permittivity takes complex values, corresponding to the introduction of damping to the model. In which case, we elect to keep the frequency $\omega\in\mathbb{R}$ as a real number but allow the Bloch parameter $\k$ to take complex values. In which case, the imaginary part of $\k$ describes the rate at which the waves amplitude decays. It should be noted that it is also quite common to do the opposite by forcing $\k$ to be real and allowing $\omega$ to be complex valued, as in \cite{ammari2022exceptional, touboul2023dispersive} for example.

In the real-valued case, it is clear that $\k$ belongs to the Brillouin zone $Y^*$ (which has the topology of a torus, due to the periodicity in $\k$). When $\k$ is complex valued, its real part still lives in $Y^*$ but its imaginary part can take arbitrary values. Thus, $\k$ lives in a space that is isomorphic to $Y^*\times\mathbb{R}$. 

In our setting, which is a damped model that is characterized by a complex permittivity, we have that $\k\in\mathbb{C}$ and it is less clear how to define a band gap. Intuitively, a band gap is a range of frequencies at which the damping is particularly large. Hence, we provide a modified definition for the notion of a band gap for complex permittivities in terms of local maxima of the amplitude decay:


\begin{definition}[Band gap for complex permittivities]
    We define a band gap for complex permittivities to be the set of frequencies $\w\in\mathbb{R}$ for which \eqref{Quasiperiodic Helmholtz problem} admits a non-trivial solution with quasiperiodicity $\k\in\mathbb{C}$ and $|\Im(\k)|$ is at a local maximum. 
\end{definition}

We will first study the problem in the one-dimensional setting. In one dimension, the problem is easier to manipulate and we are able to retrieve explicit expressions. Hence, we can get a variety of results concerning the characteristics of the quasiperiodic system. In particular, our main goal is to obtain the dispersion relation, an expression which relates the quasiperiodicities $\k\in \mathbb{C}$ with the frequencies $\w\in\mathbb{R}$, and study its properties. Then, in section \ref{sec:multid} we will provide an analysis for higher dimensional systems and we will give the equivalent relation.

\section{One dimension} \label{sec:1d}

\begin{figure}
\begin{center}
\includegraphics[scale=1.2]{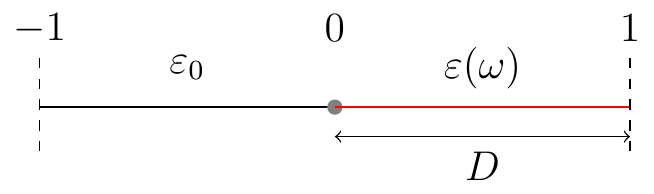}  
\end{center}
\caption{The one-dimensional setting. The periodically repeated cell is of length 2. Here the interval $[-1,0)$ is the background and the interval $[0,1)$ is the particle.}\label{fig:1D}
\end{figure}

Let us first treat the Helmholtz problem \eqref{Quasiperiodic Helmholtz problem} in the one-dimensional case. We will work on the interval $[-1,1]$, with $[-1,0)$ denoting the background and $[0,1)$ the particle. A schematic depiction of this is given in Figure \ref{fig:1D}. Hence, the problem reads as follows: 
\begin{align}\label{1D pb}
    \frac{\upd}{\upd x} \left( \frac{1}{\ve(x,\omega)} \frac{\upd u}{\upd x} \right) + \w^2 \m_0 u(x) = 0,
\end{align}
on the domain $[-1,1]$, where
\begin{align}\label{epsilon}
    \ve(x,\omega) := 
    \begin{cases}
        \ve_0, \ \ &x\in[-1,0) \ \ \text{(background)}, \\
        \ve(\w), \ \ &x\in[0,1] \ \ \quad \text{(particle)},
    \end{cases}
\end{align}
with the boundary conditions
\begin{align}\label{bd1}
    u(1) = e^{2i\k}u(-1)\quad\text{and}\quad \frac{\upd u}{\upd x}(1) = e^{2i\k}\frac{\upd u}{\upd x}(-1).
\end{align}

\subsection{Dispersion relation}

We will now retrieve an expression for the solution to \eqref{1D pb}. Let us define the quantities
\begin{align}\label{def disp}
    \s_0 := \w\sqrt{\ve_0 \m_0}, \quad\text{and}\quad \s_c := \w\sqrt{\ve(\w) \m_0}. 
\end{align}
For many of the results that follow, the crucial quantity will be the contrast between the material inside the particles and the background medium. With this in mind, we introduce the frequency-dependent contrast $\rho$ as
\begin{equation} \label{def:contrast}
    \r(\omega) := \frac{\s_c}{\s_0} = \sqrt{\frac{\ve(\w)}{\ve_0}}.
\end{equation}
Then, the following expression holds for the solution to \eqref{1D pb}.

\begin{lemma}\label{almost explicit solution}
    Let $u$ denote a solution to \eqref{1D pb}. Then, $u$ is given by
    \begin{align}\label{u cst simple}
    u(x) = 
    \begin{cases}
        \mathcal{A} \r \sin\Big( \s_0 x \Big) +\mathcal{B} \cos\Big( \s_0 x \Big) , &x\in[-1,0) \\
        \mathcal{A} \sin\Big( \s_c x \Big) +\mathcal{B} \cos\Big( \s_c x \Big) , &x\in[0,1],
    \end{cases}
\end{align}
where $ \mathcal{A},\mathcal{B}\in\mathbb{C}$ are two constants.
\end{lemma}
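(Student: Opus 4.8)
The plan is to solve \eqref{1D pb} piecewise, exploiting the fact that $\ve(x,\w)$ from \eqref{epsilon} is piecewise constant, and then to reconnect the two pieces across the interface $x=0$. On the open subinterval $(-1,0)$ the permittivity equals $\ve_0$, so $1/\ve(x,\w)$ is constant there and \eqref{1D pb} collapses to $u'' + \w^2\m_0\ve_0\,u = 0$; on $(0,1)$ the permittivity equals $\ve(\w)$ and \eqref{1D pb} collapses to $u'' + \w^2\m_0\ve(\w)\,u = 0$. By \eqref{def disp} these are $u'' + \s_0^2 u = 0$ and $u'' + \s_c^2 u = 0$ respectively. The elementary theory of second-order linear ODEs with constant coefficients then gives, on each piece, a two-dimensional solution space spanned by the $\sin$ and $\cos$ of the relevant argument; nothing changes if $\s_c$ (equivalently $\ve(\w)$) is complex, since $\sin$ and $\cos$ are entire functions. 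Hence there are constants $C_1,C_2,C_3,C_4$ with $u(x)=C_1\sin(\s_0 x)+C_2\cos(\s_0 x)$ on $[-1,0)$ and $u(x)=C_3\sin(\s_c x)+C_4\cos(\s_c x)$ on $[0,1]$.

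The next step is to impose the interface conditions at $x=0$ inherited from \eqref{Periodic Helmholtz problem}, namely continuity of $u$ and of its first derivative. Evaluating the two representations at $x=0$ annihilates the $\sin$ terms, so continuity of $u$ forces $C_2=C_4$; set $\mathcal{B}:=C_2=C_4$. Differentiating and evaluating at $x=0$ gives $C_1\s_0=C_3\s_c$, hence $C_1=C_3\,\s_c/\s_0=C_3\,\r$ by the definition \eqref{def:contrast} of the contrast. Setting $\mathcal{A}:=C_3$ and substituting these relations back into the two representations produces exactly \eqref{u cst simple}, with $\mathcal{A}$ and $\mathcal{B}$ the two surviving free constants.

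There is no genuine obstacle here: this is simply superposition for a piecewise-homogeneous one-dimensional Helmholtz operator, and the same computation could equivalently be packaged as a transfer matrix acting across $x=0$. The one point worth watching is where the contrast $\r$ enters: it appears \emph{only} through the derivative-matching condition, via the ratio $\s_c/\s_0=\r$, and it ends up multiplying the \emph{background} $\sin$ term rather than the in-particle one purely because of the normalisation choice $\mathcal{A}:=C_3$ (the $\sin$-coefficient inside the particle); a different labelling of the constants would redistribute it. Note also that the quasiperiodic boundary conditions \eqref{bd1} play no role at this stage — they will be used subsequently to determine for which pairs $(\w,\k)$ a nontrivial choice of $\mathcal{A},\mathcal{B}$ exists, i.e. to derive the dispersion relation.
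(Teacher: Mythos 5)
Your proposal is correct and takes essentially the same route as the paper: write the general sinusoidal solution on each homogeneous subinterval, then impose continuity of $u$ and of $u'$ at $x=0$ (the transmission conditions from \eqref{Periodic Helmholtz problem}) to reduce the four constants to two, with the contrast $\r=\s_c/\s_0$ entering exactly through the derivative-matching condition. Your remarks on the role of the normalisation $\mathcal{A}:=C_3$ and on the fact that \eqref{bd1} is not yet used are accurate and consistent with how the paper proceeds.
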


\begin{proof}
    We know that a solution to \eqref{1D pb} must be given by
    \begin{align}\label{u cst}
        u(x) = 
        \begin{cases}
        \mathcal{A}_1 \sin\Big( \w\sqrt{\ve_0\m_0} x \Big) +\mathcal{B}_1 \cos\Big( \w\sqrt{\ve_0\m_0} x \Big) , \ \ \ \ &x\in[-1,0), \\
        \mathcal{A}_2 \sin\Big( \w\sqrt{\ve(\w)\m_0} x \Big) +\mathcal{B}_2 \cos\Big( \w\sqrt{\ve(\w)\m_0} x \Big) , \ \ \ \ &x\in[0,1],
    \end{cases}
    \end{align}
    where $ \mathcal{A}_1, \mathcal{A}_2, \mathcal{B}_1, \mathcal{B}_2 \in \mathbb{C}$ are constants to be defined. This, also, gives
    \begin{align*}
    \frac{\upd u}{\upd x}(x) = 
    \begin{cases}
        \mathcal{A}_1 \w\sqrt{\ve_0\m_0} \cos\Big( \w\sqrt{\ve_0\m_0} x \Big) - \mathcal{B}_1 \w\sqrt{\ve_0\m_0} \sin\Big( \w\sqrt{\ve_0\m_0} x \Big) , \ \ \ \ &x\in[-1,0), \\
        \mathcal{A}_2 \w\sqrt{\ve(\w)\m_0} \cos\Big( \w\sqrt{\ve(\w)\m_0} x \Big) - \mathcal{B}_2 \w\sqrt{\ve(\w)\m_0} \sin\Big( \w\sqrt{\ve(\w)\m_0} x \Big) , \ \ \ \ &x\in[0,1].
    \end{cases}
    \end{align*}
    Now, from the boundary transmission conditions in \eqref{Periodic Helmholtz problem}, we require
    \begin{align*} 
        \lim_{x\to0^{-}} u(x) = \lim_{x\to0^{+}} u(x) \quad \quad \text{ and } \quad \quad \lim_{x\to0^{-}} \frac{\upd u}{\upd x}(x) = \lim_{x\to0^{+}} \frac{\upd u}{\upd x}(x).
    \end{align*}
    These conditions mean we must have that $\mathcal{B}_1 = \mathcal{B}_2$ and $\mathcal{A}_1 = \sqrt{{\ve(\w)}/{\ve_0}} \mathcal{A}_2 = \r(\w) \mathcal{A}$, 
    which gives the desired result.
\end{proof}

Using the boundary conditions \eqref{bd1}, we can obtain the dispersion relation for the one-dimensional problem. This is a well-known result, that first appeared in a quantum-mechanical setting \cite{kronig1931quantum} and has since been shown to describe a range of periodic classical wave systems also \cite{adams2008bloch, movchan2002asymptotic}. We include a brief proof, for completeness.

\begin{theorem}[Dispersion relation]
    Let $u$ denote the solution to \eqref{1D pb} along with the boundary conditions \eqref{bd1}. Then, for $u$ to be non-trivial, the quasiperiodicities $\k\in \mathbb{C}$ satisfies the dispersion relation
    \begin{align}\label{1D dispersion relation}
        \cos(2\k) = \cos(\s_0) \cos(\r \s_0) - \frac{1+\r^2}{2\r} \sin(\s_0) \sin(\r\s_0).
    \end{align}
\end{theorem}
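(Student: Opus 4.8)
The plan is to feed the explicit solution from Lemma~\ref{almost explicit solution} into the quasiperiodic boundary conditions \eqref{bd1} and read off the solvability condition. Recalling from \eqref{def disp}--\eqref{def:contrast} that $\s_c=\r\s_0$, the two branches of \eqref{u cst simple} are $\mathcal{A}\r\sin(\s_0 x)+\mathcal{B}\cos(\s_0 x)$ on $[-1,0)$ and $\mathcal{A}\sin(\r\s_0 x)+\mathcal{B}\cos(\r\s_0 x)$ on $[0,1]$; the transmission conditions at $x=0$ are already encoded in this form, so only the conditions at $x=\pm1$ remain. I would evaluate $u$ and $\upd u/\upd x$ at $x=1$ using the particle branch and at $x=-1$ using the background branch (so that $\sin$ picks up a sign and $\cos$ does not), and substitute into $u(1)=e^{2i\k}u(-1)$ and $\tfrac{\upd u}{\upd x}(1)=e^{2i\k}\tfrac{\upd u}{\upd x}(-1)$. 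After dividing the derivative equation by the common factor $\s_0$ and then by $\r$, this produces a homogeneous $2\times2$ system $M(\k)(\mathcal{A},\mathcal{B})^\top=0$ whose entries are $\sin(\r\s_0)+e^{2i\k}\r\sin(\s_0)$, $\cos(\r\s_0)-e^{2i\k}\cos(\s_0)$ (appearing twice, on the off-diagonal), and $-\sin(\r\s_0)-e^{2i\k}\r^{-1}\sin(\s_0)$.

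A nontrivial $u$ then exists if and only if $\det M(\k)=0$. I would expand this determinant with $z:=e^{2i\k}$ as an abbreviation: the diagonal product is $-\bigl(\sin(\r\s_0)+z\r\sin(\s_0)\bigr)\bigl(\sin(\r\s_0)+z\r^{-1}\sin(\s_0)\bigr)$ and the square of the off-diagonal term is $\bigl(\cos(\r\s_0)-z\cos(\s_0)\bigr)^2$. Subtracting and using $\sin^2+\cos^2=1$ twice collapses the $z^0$ and $z^2$ coefficients both to $-1$, leaving $\det M(\k)=-1-z^2+z\bigl(2\cos(\s_0)\cos(\r\s_0)-(\r+\r^{-1})\sin(\s_0)\sin(\r\s_0)\bigr)$. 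Setting this to zero, dividing by $z\neq0$, and using $z+z^{-1}=e^{2i\k}+e^{-2i\k}=2\cos(2\k)$ yields \eqref{1D dispersion relation} after dividing by $2$.

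There is no genuine obstacle here — the whole argument is a finite computation — so the points that need care are purely bookkeeping: getting the signs right when evaluating at $x=-1$, and checking that the cancellation in $\det M(\k)$ via the Pythagorean identity is carried out correctly (a sign slip there is the most likely source of error). It is also worth noting in passing that dividing by $\s_0$, $\r$ and $z$ is harmless, since $z=e^{2i\k}$ never vanishes and the degenerate case $\s_0=0$ (equivalently $\w=0$) can be excluded, so that \eqref{1D dispersion relation} captures exactly the condition for \eqref{1D pb}--\eqref{bd1} to admit a non-trivial solution.
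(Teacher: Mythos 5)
Your proposal is correct and follows essentially the same route as the paper: substitute the explicit solution of Lemma~\ref{almost explicit solution} into the quasiperiodic conditions \eqref{bd1}, require the resulting homogeneous $2\times 2$ system in $(\mathcal{A},\mathcal{B})$ to be singular, and simplify the determinant via $e^{2i\k}+e^{-2i\k}=2\cos(2\k)$ to reach \eqref{1D dispersion relation}. The only (harmless) cosmetic difference is that you normalise the derivative equation by $\s_0\r$ before expanding the determinant, which makes the Pythagorean cancellations slightly tidier than in the paper's version.
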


\begin{proof}
    From Lemma \ref{almost explicit solution}, we have that $u$ is given by \eqref{u cst simple}. Then, using \eqref{bd1}, we have
\begin{align}\label{system cst}
    \begin{cases}
        \Big[ \sin( \s_c ) + e^{2i\k}\r\sin(\s_0) \Big] \mathcal{A} + \Big[ \cos( \s_c ) - e^{2i\k}\cos(\s_0) \Big] \mathcal{B} = 0, \\
        \Big[ \s_c\cos(\s_c) -e^{2i\k}\r\s_0\cos(\s_0) \Big]\mathcal{A} - \Big[ \s_c\sin(\s_c) + e^{2i\k}\s_0\sin(\s_0) \Big]\mathcal{B} = 0.
    \end{cases}
\end{align}
We observe that for \eqref{1D pb} to have a non-zero solution, it should hold
\begin{align*}
    \Big[ \sin( \s_c ) + e^{2i\k}\r\sin(\s_0) \Big] \cdot \Big[ \s_c &\sin(\s_c) + e^{2i\k}\s_0\sin(\s_0) \Big] + \\
    &+ \Big[ \cos( \s_c ) - e^{2i\k}\cos(\s_0) \Big] \cdot \Big[ \s_c\cos(\s_c) -e^{2i\k}\r\s_0\cos(\s_0) \Big] = 0,
\end{align*}
which gives
\begin{align}\label{eq:2}
    \sqrt{\ve(\w)} e^{4i\k} + \left[ \frac{\ve_0+\ve(\w)}{\sqrt{\ve_0}}\sin(\s_0)\sin(\s_c) - 2 \sqrt{\ve(\w)} \cos(\s_0) \cos(\s_c) \right] e^{2i\k} + \sqrt{\ve(\w)} = 0.
\end{align}
Making some algebraic rearrangements, we observe that 
\begin{align}\label{eq:3}
\begin{aligned}   
    2 \sqrt{\frac{\ve(\w)}{\ve_0}}\Big[ \cos(\s_0) \cos(\s_c) - \cos(2\k) \Big] - \frac{\ve_0+\ve(\w)}{\ve_0} \sin(\s_0) \sin(\s_c) = 0.
\end{aligned}
\end{align}
Finally, making the substitutions $\s_c = \r \s_0$ and $\sqrt{\ve(\w)} = \r \sqrt{\ve_0}$, we obtain the desired result. 
\end{proof}

The dispersion relation \eqref{1D dispersion relation} can be used to plot the dispersion curves. For a given frequency $\omega$, $\rho(\omega)$ can be calculated to yield the right hand side of \eqref{1D dispersion relation}, which can subsequently be solved to find $\kappa$. This is shown in Figure~\ref{fig:HPdispersion}. Since $\epsilon(\omega)$ is complex valued, $\kappa$ will generally take complex values. We plot only the absolute values of both the real and imaginary parts; as we will see below, this is sufficient to characterise the full dispersion relation. Notice also that $\Re(\k)\in Y^*=[-\pi/2,\pi/2)$.

\begin{figure} 
    \centering
    \includegraphics[width=0.6\linewidth]{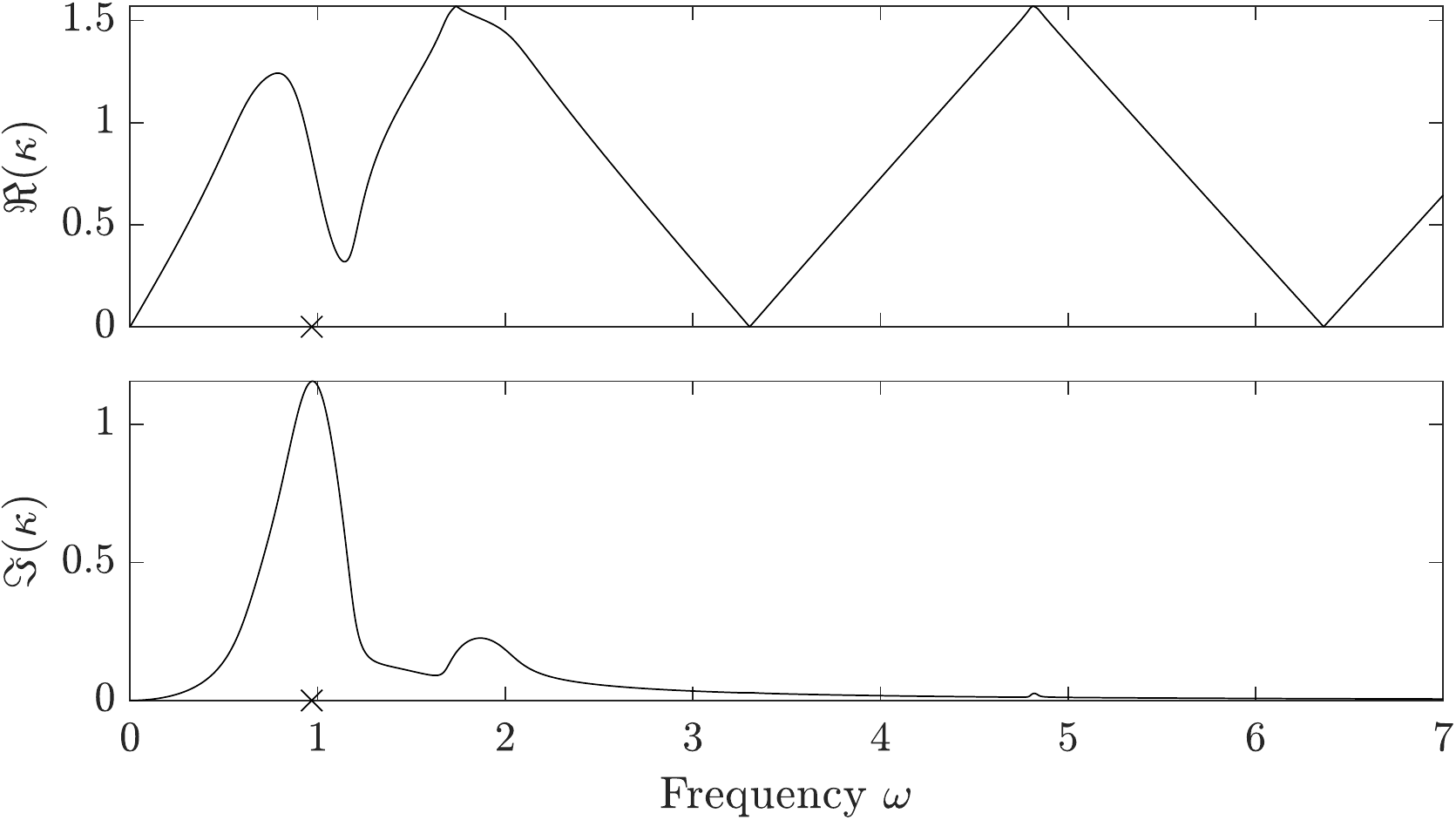}
    \begin{tikzpicture}
    \node[anchor=west] at (-1.75,1.5) {Singularities:};
    \draw[<->,\axiscol] (-1.5,0) -- (1.5,0);
    \draw[<->,\axiscol] (0,-1) -- (0,1);
    \node[anchor=north,scale=0.6,\axiscol] at (1.5,0) {Re$(\omega)$};
    \node[anchor=east,scale=0.6,\axiscol] at (0,1) {Im$(\omega)$};
    \node[anchor=west] at (0.5,-0.5) {$\times \, \omega^*_+$};
    \node[anchor=east] at (-0.5,-0.5) {$\omega^*_- \, \times$};

    \draw[\axiscol] (0.78,0.1) -- (0.78,-0.1) node[at end,anchor=north,scale=0.5]{0.97};
    \draw[\axiscol] (-0.78,0.1) -- (-0.78,-0.1) node[at end,anchor=north,scale=0.5]{-0.97};
    \draw[\axiscol] (-0.1,-0.55) -- (0.1,-0.55) node[at end,anchor=west,scale=0.5]{-0.25};
    
    \node[white] at (-2,-3) {.}; 
    \end{tikzpicture}
    \caption{The dispersion relation of the halide perovskite photonic crystal. We model a material with permittivity given by \eqref{prmtvt} with $\alpha=1$, $\beta=1$ and $\gamma=0.5$. The frequency $\omega$ is chosen to be real and the Bloch parameter $\kappa$ allowed to take complex values. The permittivity is singular at two points, which are in the lower complex plane and are symmetric about the imaginary axis, as indicated in the sketch on the right and (the real parts) by the crosses on the frequency axes of the plots. }
    \label{fig:HPdispersion}
\end{figure}

\subsection{Properties of the dispersion relation}\label{Properties of the dispersion relation}

The dispersion relation \eqref{1D dispersion relation} describes the behaviour of the periodic system and reveals the relationship between the quasiperiodicities $\k\in \mathbb{C}$, the frequencies $\w\in\mathbb{R}$ and the permittivity $\ve(\w)$ of the material. We can use it to derive some simple results about the dispersion curves. The first thing to understand is the symmetries of the dispersion curves.

\begin{lemma}[Opposite quasiperiodicities]\label{Opposite quasiperiodicities}
    Let $\k\in \mathbb{C}$ be a complex quasiperiodicity satisfying the dispersion relation \eqref{1D dispersion relation} for a given frequency $\omega\in\mathbb{R}$. Then, the opposite quasiperiodicity, i.e. $-\k$, satisfies the same dispersion relation.
\end{lemma}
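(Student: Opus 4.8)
The plan is to observe that the dispersion relation \eqref{1D dispersion relation} depends on $\k$ only through the quantity $\cos(2\k)$ on the left-hand side, while the right-hand side is a fixed complex number once the frequency $\w$ (and hence $\r=\r(\w)$) is chosen. Since cosine is an even function, $\cos(2(-\k)) = \cos(-2\k) = \cos(2\k)$, so if $\k$ produces a given value of the right-hand side then so does $-\k$. Therefore $-\k$ satisfies \eqref{1D dispersion relation} for the same $\w$ whenever $\k$ does.

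Concretely, I would write: fix $\w\in\mathbb{R}$ and set
\begin{align*}
    R(\w) := \cos(\s_0)\cos(\r\s_0) - \frac{1+\r^2}{2\r}\sin(\s_0)\sin(\r\s_0),
\end{align*}
where $\s_0 = \s_0(\w)$ and $\r = \r(\w)$ are as in \eqref{def disp}--\eqref{def:contrast}, so that \eqref{1D dispersion relation} reads $\cos(2\k) = R(\w)$. If $\k$ satisfies this, then $\cos(2(-\k)) = \cos(2\k) = R(\w)$, which is precisely the statement that $-\k$ also satisfies \eqref{1D dispersion relation} for the same $\w$. That completes the argument.

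There is essentially no obstacle here: the result is an immediate consequence of the evenness of $\cos$ and the specific form in which $\k$ enters the dispersion relation. The only point worth a sentence of care is that the claim is really a statement about the dispersion relation \eqref{1D dispersion relation} itself rather than about the solution $u$; one could add a remark that the eigenfunction associated with $-\k$ is obtained from that of $\k$ by the reflection $x \mapsto -x$ (which swaps the two boundary conditions in \eqref{bd1} appropriately), but this is not needed for the stated lemma. I expect the proof to be only a few lines long.
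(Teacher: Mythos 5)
Your argument is correct and is exactly the paper's own proof: both rest on the evenness of cosine, so that $\cos(2(-\k))=\cos(2\k)$ and $-\k$ satisfies \eqref{1D dispersion relation} whenever $\k$ does. The extra remark about the reflected eigenfunction is a nice observation but, as you note, not needed.
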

\begin{proof}
    We just have to use that $\cos(\cdot)$ is an even function. Then, if $\k\in\mathbb{C}$ is such that \eqref{1D dispersion relation} holds, from the fact that $\cos(-2\k) = \cos(2\k)$, we get that $-\k\in\mathbb{C}$ also satisfies \eqref{1D dispersion relation}. This concludes the proof.
\end{proof}

It is with Lemma~\ref{Opposite quasiperiodicities} in mind that we are able to plot only the absolute values of the imaginary parts in Figure~\ref{fig:HPdispersion} and the subsequent figures.

\subsubsection{Real and imaginary parts}

In the analysis that will follow, it will be useful to be able to describe the behaviour of the real and imaginary part of the quasiaperiodicity with respect to the permittivity. In particular, we will decompose both the quasiperiodicity $\k$ and $\r$ into real and imaginary parts, and we will derive this dependence from the dispersion relation. Since $\k\in\mathbb{C}$ and $\r\in\mathbb{C}$, let us define:
\begin{align}\label{a complex}
    \k = \k_1 + i\k_2 \ \ \ \text{ and } \ \ \ \r = \r_1 + i\r_2,
\end{align}
with $\k_1,\k_2,\r_1,\r_2\in\mathbb{R}$. We will also define $\mathscr{L}_1$ and $\mathscr{L}_2$, which depend on $\w\in\mathbb{R}$, as follows:
\begin{align}\label{L1}
\begin{aligned}
    \mathscr{L}_1(\omega):=&\cos(\s_0) \cos(\s_0\r_1)\cosh(\s_0\r_2) - \frac{\sin(\s_0)}{2(\r_1^2+\r_2^2)}\Big[ \r_1(1+\r_1^2+\r_2^2)\sin(\s_0\r_1)\cosh(\s_0\r_2)-\\
    &-\r_2(\r_2^2-1+\r_1^2) \cos(\s_0\r_1) \sinh(\s_0\r_2)\Big],
\end{aligned}
\end{align}
and
\begin{align}\label{L2}
\begin{aligned}
    \mathscr{L}_2(\omega):=&\cos(\s_0) \sin(\s_0\r_1) \sinh(\s_0\r_2) + \frac{\sin(\s_0)}{2(\r_1^2+\r_2^2)}\Big[\r_2(\r_2^2-1+\r_1^2)\sin(\s_0\r_1)\cosh(\s_0\r_2) +\\
    &+ \r_1(1+\r_1^2+\r_2^2)\cos(\s_0\r_1)\sinh(\s_0\r_2) \Big],
\end{aligned}
\end{align}
where we note that $\rho_1$, $\rho_2$ and $\sigma_0$ all depend on the frequency $\omega$, as specified in \eqref{def disp} and \eqref{def:contrast}. Then, we have the following result.

\begin{proposition}\label{real and im}
    Let $\k\in\mathbb{C}$, given by \eqref{a complex}, satisfying the dispersion relation \eqref{1D dispersion relation} for a given frequency $\omega\in\mathbb{R}$. Then, its real and imaginary parts are given by
    \begin{align}\label{a_1}
        \Re(\k) = \pm \frac{1}{2} \arccos\left(\frac{\mathscr{L}_1}{\cosh(2\Im(\k))}\right),
    \end{align}
    and
    \begin{align}\label{a_2}
        \Im(k) = \frac{1}{2} \text{\emph{arcsinh}}\left( \pm \sqrt{ \frac{1}{2} \Big[\mathscr{L}_1^2+\mathscr{L}_2^2-1+\sqrt{(1-\mathscr{L}_1^2-\mathscr{L}_2^2)^2+4\mathscr{L}_2^2}\Big] } \right),
    \end{align}
    where $\mathscr{L}_1$ and $\mathscr{L}_2$ are given by \eqref{L1} and \eqref{L2}, respectively. We, also, note that the choice of $+$ or $-$ should be the same in \eqref{a_1} and \eqref{a_2}.
\end{proposition}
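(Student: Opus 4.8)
The plan is to start from the dispersion relation \eqref{1D dispersion relation} and substitute the decompositions \eqref{a complex} to separate it into its real and imaginary parts. First I would compute $\cos(2\k)$ with $\k = \k_1 + i\k_2$, which gives $\cos(2\k_1)\cosh(2\k_2) - i\sin(2\k_1)\sinh(2\k_2)$. The right-hand side of \eqref{1D dispersion relation} is a function of $\s_0$ (which is real, since $\w\in\mathbb{R}$) and $\r = \r_1 + i\r_2$; expanding $\cos(\r\s_0) = \cos(\s_0\r_1)\cosh(\s_0\r_2) - i\sin(\s_0\r_1)\sinh(\s_0\r_2)$, $\sin(\r\s_0) = \sin(\s_0\r_1)\cosh(\s_0\r_2) + i\cos(\s_0\r_1)\sinh(\s_0\r_2)$, and the prefactor $\frac{1+\r^2}{2\r}$ (which one rationalises by multiplying numerator and denominator by $\bar\r$), one collects real and imaginary parts. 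The claim is precisely that the real part of the right-hand side equals $\mathscr{L}_1(\w)$ and the imaginary part equals $-\mathscr{L}_2(\w)$, so that the dispersion relation splits into the two scalar equations
\begin{align}\label{eq:splitplan}
    \cos(2\k_1)\cosh(2\k_2) = \mathscr{L}_1 \qquad\text{and}\qquad \sin(2\k_1)\sinh(2\k_2) = \mathscr{L}_2.
\end{align}
Verifying these two identities is essentially bookkeeping with the prefactor $\frac{1+\r^2}{2\r} = \frac{(1+\r^2)\bar\r}{2|\r|^2} = \frac{\r_1(1+\r_1^2+\r_2^2) + i\,\r_2(\r_1^2+\r_2^2-1)}{2(\r_1^2+\r_2^2)}$, and matches the structure visible in \eqref{L1}--\eqref{L2}.

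Next I would solve the system \eqref{eq:splitplan} for $\k_1$ and $\k_2$. Write $X = \cos(2\k_1)$, so $\sin^2(2\k_1) = 1 - X^2$, and let $C = \cosh(2\k_2)$, $S = \sinh(2\k_2)$, with $C^2 - S^2 = 1$. From \eqref{eq:splitplan}, $XC = \mathscr{L}_1$ and $(1-X^2)S^2 = \mathscr{L}_2^2$. Substituting $X = \mathscr{L}_1/C$ and $S^2 = C^2 - 1$ gives $(1 - \mathscr{L}_1^2/C^2)(C^2-1) = \mathscr{L}_2^2$, i.e. a quadratic in $C^2$: $C^4 - (1 + \mathscr{L}_1^2 + \mathscr{L}_2^2)C^2 + \mathscr{L}_1^2 = 0$. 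Solving and taking the root that is $\geq 1$ (so that $C = \cosh(2\k_2)$ is attainable for real $\k_2$) yields $C^2 = \tfrac12\big[1 + \mathscr{L}_1^2 + \mathscr{L}_2^2 + \sqrt{(1+\mathscr{L}_1^2+\mathscr{L}_2^2)^2 - 4\mathscr{L}_1^2}\big]$. Then $S^2 = C^2 - 1 = \tfrac12\big[\mathscr{L}_1^2 + \mathscr{L}_2^2 - 1 + \sqrt{(1+\mathscr{L}_1^2+\mathscr{L}_2^2)^2 - 4\mathscr{L}_1^2}\big]$, and one checks that the discriminant simplifies: $(1+\mathscr{L}_1^2+\mathscr{L}_2^2)^2 - 4\mathscr{L}_1^2 = (1-\mathscr{L}_1^2-\mathscr{L}_2^2)^2 + 4\mathscr{L}_2^2$. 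This gives $\Im(\k) = \k_2 = \tfrac12\,\mathrm{arcsinh}(\pm\sqrt{S^2})$, which is \eqref{a_2}. Finally, from $XC = \mathscr{L}_1$ we get $X = \cos(2\k_1) = \mathscr{L}_1/\cosh(2\k_2)$, hence $\Re(\k) = \k_1 = \pm\tfrac12\arccos(\mathscr{L}_1/\cosh(2\Im(\k)))$, which is \eqref{a_1}; the sign correlation between \eqref{a_1} and \eqref{a_2} follows by tracking the sign of $\sin(2\k_1)\sinh(2\k_2) = \mathscr{L}_2$ through both extractions.

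The main obstacle, and the only genuinely delicate point, is the first step: confirming that the real and imaginary parts of the right-hand side of \eqref{1D dispersion relation} are exactly $\mathscr{L}_1$ and $-\mathscr{L}_2$ as written in \eqref{L1}--\eqref{L2}. This requires care with the rationalisation of $\frac{1+\r^2}{2\r}$ and with the signs in the product-of-sines and product-of-cosines expansions; the groupings of terms like $\r_1(1+\r_1^2+\r_2^2)$ and $\r_2(\r_2^2-1+\r_1^2)$ in the definitions are exactly what one expects to emerge, so this is a matter of being meticulous rather than of finding a clever idea. A secondary subtlety is branch selection: I should note that both $\arccos$ and $\mathrm{arcsinh}$ together with the $\pm$ ambiguity (justified by Lemma~\ref{Opposite quasiperiodicities}, which guarantees $-\k$ is also a solution) capture the full set of solutions in the strip $\Re(\k)\in Y^*$, and that choosing the larger root $C^2 \geq 1$ of the quadratic is forced by the requirement that $\k_2$ be real — the smaller root gives $C^2 = \mathscr{L}_1^2/C^2 \cdot \cdots \leq 1$ which is inadmissible unless it equals $1$. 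With those two points handled, the remainder is algebraic simplification that I would not spell out in full.
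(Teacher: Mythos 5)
Your proposal is correct and follows essentially the same route as the paper: decompose $\k$ and $\r$ into real and imaginary parts, rationalise the prefactor $\tfrac{1+\r^2}{2\r}$ to identify the system $\cos(2\k_1)\cosh(2\k_2)=\mathscr{L}_1$, $\sin(2\k_1)\sinh(2\k_2)=\mathscr{L}_2$, and then eliminate to obtain a quadratic whose admissible root gives $\Im(\k)$. The only cosmetic difference is that you solve a quadratic in $\cosh^2(2\k_2)$ and invoke the identity $(1+\mathscr{L}_1^2+\mathscr{L}_2^2)^2-4\mathscr{L}_1^2=(1-\mathscr{L}_1^2-\mathscr{L}_2^2)^2+4\mathscr{L}_2^2$, whereas the paper works directly with a quadratic in $\sinh^2(2\k_2)$; the two are equivalent.
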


\begin{proof}
    From \eqref{a complex}, the dispersion relation \eqref{1D dispersion relation} becomes
    \begin{align*}
        \cos(2\k_1+i2\k_2) = \cos(\s_0)\cos(\s_0\r_1+i\s_0\r_2) - \frac{1+(\r_1+i\r_2)^2}{2(\r_1+i\r_2)}\sin(\s_0)\sin(\s_0\r_1+i\s_0\r_2),
    \end{align*}
    which is,
    \begin{align*}
        \cos(2\k_1)&\cosh(2\k_2) - i \sin(2\k_1)\sinh(2\k_2) = \\
        &=\cos(\s_0)\Big[ \cos(\s_0\r_1)\cosh(\s_0\r_2) - i \sin(\s_0\r_1) \sinh(\s_0\r_2) \Big] -\\
        &\quad- \frac{1+\r_1^2+2i\r_1\r_2-\r_2^2}{2(\r_1^2+\r_2^2)}(\r_1-i\r_2)\sin(\s_0)\Big[ \sin(\s_0\r_1)\cosh(\s_0\r_2) + i\cos(\s_0\r_1)\sinh(\s_0\r_2) \Big].
    \end{align*}
    Taking real and imaginary parts, we obtain, for the real part,
    \begin{align}\label{real part condition}
            \cos(2\k_1)\cosh(2\k_2) = &\cos(\s_0) \cos(\s_0\r_1)\cosh(\s_0\r_2) - \frac{\sin(\s_0)}{2(\r_1^2+\r_2^2)}\Big[ \r_1(1+\r_1^2+\r_2^2)\sin(\s_0\r_1)\cosh(\s_0\r_2)+\nonumber \\
            &-\r_2(\r_2^2-1+\r_1^2) \cos(\s_0\r_1) \sinh(\s_0\r_2)\Big],
    \end{align}
    and, for the imaginary part,
    \begin{align}\label{imaginary part condition}
        \sin(2\k_1)\sinh(2\k_2) = &\cos(\s_0) \sin(\s_0\r_1) \sinh(\s_0\r_2) + \frac{\sin(\s_0)}{2(\r_1^2+\r_2^2)}\Big[\r_2(\r_2^2-1+\r_1^2)\sin(\s_0\r_1)\cosh(\s_0\r_2) +\nonumber \\
        &+ \r_1(1+\r_1^2+\r_2^2)\cos(\s_0\r_1)\sinh(\s_0\r_2) \Big].
    \end{align}
    So, from \eqref{L1} and \eqref{L2}, we obtain the system
    \begin{align}\label{system with L}
        \begin{cases}
            \cos(2\k_1)\cosh(2\k_2) = \mathscr{L}_1,\\
            \sin(2\k_1)\sinh(2\k_2) = \mathscr{L}_2.
        \end{cases}
    \end{align}
    From the first equation, we immediately see that
    \begin{align*}
        \k_1 =\pm \frac{1}{2} \arccos\left(\frac{\mathscr{L}_1}{\cosh(2\k_2)}\right),
    \end{align*}
    then, substituting into the second equation gives
    \begin{align*}
        \sin\left[\arccos\left(\frac{\mathscr{L}_1}{\cosh(2\k_2)}\right)\right]\sinh(2\k_2) = \pm\mathscr{L}_2.
    \end{align*}
    We know that for $x\in[-1,1]$, we have the identity $\sin[\arccos(x)] = \sqrt{1-x^2}$. Hence, from the above, we get
    \begin{align}\label{dagger}
        \sqrt{1 - \frac{\mathscr{L}_1^2}{\cosh^2(2\k_2)} } \sinh(2\k_2) = \pm\mathscr{L}_2.
    \end{align}
    Similarly, using the fact that $\cosh^2(x) - \sinh^2(x) = 1$ for $x\in\mathbb{R}$, we find that 
    \begin{align}\label{dagger 2}
        \sqrt{1 - \frac{\mathscr{L}_1^2}{1+\sinh^2(2\k_2)} } \sinh(2\k_2) = \pm\mathscr{L}_2.
    \end{align}
    Hence, we have
    \begin{align*}
        \sinh^4(2\k_2) + (1-\mathscr{L}_1^2-\mathscr{L}_2^2) \sinh^2(2\k_2) - \mathscr{L}_2^2 = 0.
    \end{align*}
    Using the quadratic formula, this gives
    \begin{align*}
        \sinh^2(2\k_2) = \frac{1}{2}\Big[\mathscr{L}_1^2+\mathscr{L}_2^2-1+\sqrt{(1-\mathscr{L}_1^2-\mathscr{L}_2^2)^2+4\mathscr{L}_2^2}\Big],
    \end{align*}
    and so, we get
    \begin{align*}
        \k_2 = \frac{1}{2} \text{arcsinh}\left( \pm \sqrt{ \frac{1}{2} \Big[\mathscr{L}_1^2+\mathscr{L}_2^2-1+\sqrt{(1-\mathscr{L}_1^2-\mathscr{L}_2^2)^2+4\mathscr{L}_2^2}\Big] } \right).
    \end{align*}
    This gives the desired result.
\end{proof}

\begin{remark}
    Another way of viewing that the choice of $+$ or $-$ in \eqref{a_1} is the same as the one in \eqref{a_2} is from the fact that we have shown that if $\k\in\mathbb{C}$ satisfies \eqref{1D dispersion relation}, then $-\k$ does as well, but $\overline{\k}$ does not.
\end{remark}

\subsubsection{Imaginary part decay}

From \eqref{a_2}, we obtain a result on the decay of the imaginary part of the quasiperiodicity $\k$ as $\w\to\infty$. We will first state some preliminary results, before proving the main theorem.

\begin{lemma}\label{r limit}
    Let the frequency-dependent contrast $\r\in\mathbb{C}$ be given by \eqref{def:contrast}. Then, it holds
    \begin{align}\label{limit r}
        \lim_{\w\to\infty} |\r| = 1,
    \end{align}
    and 
    \begin{align}\label{limit r_1 and r_2}
        \lim_{\w\to\infty} |\Re(\r)| = 1 \ \ \ \text{ and } \ \ \ \lim_{\w\to\infty} |\Im(\r)| = 0.
    \end{align}
\end{lemma}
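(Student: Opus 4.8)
The plan is to analyse the large-frequency behaviour of $\ve(\w)$ directly from the Drude--Lorentz formula \eqref{prmtvt} and then pass to the contrast $\r$ via \eqref{def:contrast}. First I would observe that as $\w\to\infty$ the denominator $1-\b\w^2-i\g\w$ of the fractional term in \eqref{prmtvt} has modulus growing like $\b\w^2$, so the whole fraction $\a/(1-\b\w^2-i\g\w)\to0$. Hence $\ve(\w)\to\ve_0$, and therefore $\ve(\w)/\ve_0\to1$. Since $\r=\sqrt{\ve(\w)/\ve_0}$ with the branch of the square root fixed so that $\r\to+1$ (continuity from the static limit), we get $\lim_{\w\to\infty}|\r|=1$, which is \eqref{limit r}.

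Next I would obtain the componentwise statement \eqref{limit r_1 and r_2} by a more quantitative expansion. Write $\ve(\w)/\ve_0 = 1 + \a/\big(\ve_0(1-\b\w^2-i\g\w)\big)$; for large $\w$ the correction term is $O(1/\w^2)$, and more precisely it equals $-\a/(\ve_0\b\w^2) + O(1/\w^3)$, which is real to leading order with an imaginary part that is $O(1/\w^3)$. Taking the square root, $\r = 1 + \tfrac12\big(\ve(\w)/\ve_0 - 1\big) + O(1/\w^4) = 1 - \a/(2\ve_0\b\w^2) + O(1/\w^3)$. Reading off real and imaginary parts: $\Re(\r) = 1 - \a/(2\ve_0\b\w^2) + O(1/\w^3)\to1$, so $|\Re(\r)|\to1$; and $\Im(\r) = O(1/\w^3)\to0$, so $|\Im(\r)|\to0$. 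This establishes \eqref{limit r_1 and r_2}.

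Alternatively, and perhaps more cleanly, one can deduce \eqref{limit r_1 and r_2} from \eqref{limit r} together with the observation that $\Im(\ve(\w)) = \a\g\w/\big((1-\b\w^2)^2+\g^2\w^2\big) > 0$ for $\w>0$ but tends to $0$ like $1/\w^3$; since $\ve_0>0$, the argument of $\ve(\w)/\ve_0$ tends to $0$, hence so does the argument of its principal square root $\r$, and combined with $|\r|\to1$ this forces $\r\to1$, i.e. $\Re(\r)\to1$ and $\Im(\r)\to0$. Either route is a short computation.

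I do not anticipate a genuine obstacle here: the only subtlety worth a sentence in the write-up is the choice of branch of the square root in \eqref{def:contrast} — one should note that near $\w=\infty$ the quantity $\ve(\w)/\ve_0$ lies in a small neighbourhood of $1$ in $\mathbb{C}\setminus(-\infty,0]$, so the principal branch is well-defined and continuous there, and it is this branch that gives $\r\to+1$ rather than $\r\to-1$. Everything else is elementary asymptotics of a rational function.
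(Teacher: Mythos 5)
Your argument is correct and follows essentially the same route as the paper: both reduce the claim to the large-$\w$ asymptotics of the real and imaginary parts of $\ve(\w)/\ve_0$, namely $a(\w)=1+O(1/\w^2)$ and $b(\w)=O(1/\w^3)$, and then pass to the square root. The only cosmetic difference is that the paper extracts $\Re(\r)$ and $\Im(\r)$ from the exact surd formula for $\sqrt{a+ib}$ (carrying a global $\pm$, which is why the lemma is stated with absolute values), whereas you fix the principal branch and Taylor-expand $\sqrt{1+z}$; both are valid and your remark on the branch choice covers the one point of care.
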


\begin{proof}
    From \eqref{def disp}, we have
    \begin{align*}
        \r &= \sqrt{1 + \frac{\a}{\ve_0(1-\b\w^2 - i\g\w)}} 
    \end{align*}
    which gives directly $\lim_{\w\to\infty} |\r| = 1$. This can be rewritten as
    \begin{align*}
        \r &= \sqrt{1 + \frac{\a}{\ve_0}\frac{1-\b\w^2}{(1-\b\w^2)^2 + \g^2 \w^2} + i \frac{\a}{\ve_0} \frac{\g\w}{(1-\b\w^2)^2 + \g^2 \w^2}}
    \end{align*}
    To ease the notation, let us write
    \begin{align}\label{a and b}
        a(\w) := 1 + \frac{\a}{\ve_0}\frac{1-\b\w^2}{(1-\b\w^2)^2 + \g^2 \w^2} \ \ \ \text{ and } \ \ \ b(\w) := \frac{\a}{\ve_0} \frac{\g\w}{(1-\b\w^2)^2 + \g^2 \w^2}.
    \end{align}
    Then, we have
    \begin{align}\label{r as a+ib}
        \r = \pm \left( \sqrt{\frac{\sqrt{a(\w)^2 + b(\w)^2} + a(\w)}{2}} + i \frac{b(\w)}{|b(\w)|} \sqrt{\frac{\sqrt{a(\w)^2 + b(\w)^2} - a(\w)}{2}} \right)
    \end{align}
    We observe, from \eqref{a and b}, that, as $\w\to\infty$,
    \begin{align}\label{a and b asymptotic behaviour}
        a(\w) = 1 + O\left(\frac{1}{\w^2}\right) \ \ \ \text{ and } \ \ \ b(\w) = O\left(\frac{1}{\w^3}\right),
    \end{align}
    which gives
    \begin{align*}
        \lim_{\w\to\infty} a(\w) = 1 \ \ \ \text{ and } \ \ \ \lim_{\w\to\infty} b(\w) = 0.
    \end{align*}
    Also, since $\a,\g,\ve_0>0$, it holds that
    \begin{align*}
        \lim_{\w\to\infty} \frac{b(\w)}{|b(\w)|} = 1.
    \end{align*}
    Hence, combining these results, we get
    \begin{align*}
        \lim_{\w\to\infty} |\r_1| = \lim_{\w\to\infty} \left| \sqrt{\frac{\sqrt{a(\w)^2 + b(\w)^2} + a(\w)}{2}} \right| = 1
    \end{align*}
    and
    \begin{align*}
        \lim_{\w\to\infty} |\r_2| = \lim_{\w\to\infty} \left| \frac{b(\w)}{|b(\w)|} \sqrt{\frac{\sqrt{a(\w)^2 + b(\w)^2} - a(\w)}{2}} \right| = 0.
    \end{align*}
    This concludes the proof.
\end{proof}

\begin{lemma}\label{limit L1 L2}
    As $\w\to\infty$, we have that
    \begin{align}\label{L_i estimate}
        |\mathscr{L}_1| \leq 1 \ \ \ \text{ and } \ \ \ \mathscr{L}_2 \to 0,
    \end{align}
    where $\mathscr{L}_1=\mathscr{L}_1(\omega)$ and $\mathscr{L}_2=\mathscr{L}_2(\omega)$ were defined in \eqref{L1} and \eqref{L2}.
\end{lemma}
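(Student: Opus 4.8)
The plan is to identify $\mathscr{L}_1$ and $\mathscr{L}_2$ with the real part and minus the imaginary part of the right-hand side of the dispersion relation \eqref{1D dispersion relation}, and then feed in the asymptotics of the contrast $\r$ from Lemma~\ref{r limit}. Comparing the definitions \eqref{L1}--\eqref{L2} with the expansion carried out in the proof of Proposition~\ref{real and im} (together with \eqref{system with L}) shows that
\begin{equation*}
    \mathscr{L}_1 - i\mathscr{L}_2 = \cos(\s_0)\cos(\r\s_0) - \frac{1+\r^2}{2\r}\sin(\s_0)\sin(\r\s_0).
\end{equation*}
First I would apply the product-to-sum identities to $\cos(\s_0)\cos(\r\s_0)$ and $\sin(\s_0)\sin(\r\s_0)$ and collect terms; this turns the right-hand side into the compact form
\begin{equation*}
    \mathscr{L}_1 - i\mathscr{L}_2 = \frac{(1+\r)^2}{4\r}\cos\big((1+\r)\s_0\big) - \frac{(1-\r)^2}{4\r}\cos\big((1-\r)\s_0\big),
\end{equation*}
which separates the ``large'' combination $(1+\r)\s_0$ from the ``small'' combination $(1-\r)\s_0$. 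I would also note that this expression is unchanged under $\r\mapsto-\r$, so I may assume the branch of the square root in \eqref{def:contrast} with $\r_1>0$.

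Next I would invoke Lemma~\ref{r limit}: as $\w\to\infty$, $|\r|\to1$, hence $\r_1\to1$, $\r_2\to0$, $\tfrac{(1+\r)^2}{4\r}\to1$ and $\tfrac{(1-\r)^2}{4\r}\to0$. The one delicate ingredient is that $\s_0\r_2\to0$; here I would use not merely the limit but the decay \emph{rate} obtained inside the proof of Lemma~\ref{r limit}, namely $\r_2=O(\w^{-3})$ (from \eqref{r as a+ib} and \eqref{a and b asymptotic behaviour}) together with $\s_0=\w\sqrt{\ve_0\m_0}=O(\w)$, so that $\s_0\r_2=O(\w^{-2})\to0$. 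With this in hand I would expand $\cos\big((1+\r)\s_0\big)=\cos\big((1+\r_1)\s_0\big)\cosh(\r_2\s_0)-i\sin\big((1+\r_1)\s_0\big)\sinh(\r_2\s_0)$; since $\cosh(\r_2\s_0)\to1$, $\sinh(\r_2\s_0)\to0$ and $(1+\r_1)\s_0$ is real (so both trigonometric factors are bounded by $1$), this equals $\cos\big((1+\r_1)\s_0\big)+o(1)$. Likewise $|\cos\big((1-\r)\s_0\big)|\le\cosh(\r_2\s_0)$ stays bounded. Substituting these into the compact form gives
\begin{equation*}
    \mathscr{L}_1 - i\mathscr{L}_2 = \cos\big((1+\r_1)\s_0\big) + o(1), \qquad \w\to\infty.
\end{equation*}

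Finally, since $(1+\r_1)\s_0\in\mathbb{R}$, comparing real and imaginary parts yields $\mathscr{L}_2=o(1)$, i.e.\ $\mathscr{L}_2\to0$, and $\mathscr{L}_1=\cos\big((1+\r_1)\s_0\big)+o(1)$, hence $|\mathscr{L}_1|\le|\cos\big((1+\r_1)\s_0\big)|+o(1)\le1+o(1)$; that is, $\limsup_{\w\to\infty}|\mathscr{L}_1|\le1$, which is what the bound $|\mathscr{L}_1|\le1$ means here (note $\mathscr{L}_1$ itself oscillates and does not converge). The hard part is exactly the interplay of $\s_0\to\infty$ with $\r\to1$: one cannot pass to the limit factor by factor in \eqref{1D dispersion relation}, since $\cos(\r\s_0)$ alone neither converges nor has modulus $\le1$; the product-to-sum reorganisation, combined with the quantitative bound $\r_2=O(\w^{-3})$ that kills the imaginary part $\r_2\s_0$ of the argument even though $\s_0$ grows, is what makes the estimate go through. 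As a cross-check one could instead bound the six terms of \eqref{L1}--\eqref{L2} directly, using $\cosh(\s_0\r_2)\to1$, $\sinh(\s_0\r_2)\to0$, $\r_1^2+\r_2^2\to1$ and the boundedness of $\cos(\s_0\r_1),\sin(\s_0\r_1),\cos(\s_0),\sin(\s_0)$; the bookkeeping is heavier but the conclusion is identical.
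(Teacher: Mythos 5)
Your proof is correct and follows essentially the same route as the paper's: both rest on Lemma~\ref{r limit} together with the quantitative observation that $\s_0\r_2\to0$ even though $\s_0\to\infty$, and both ultimately reduce $\mathscr{L}_1$ to $\cos\big((1+\r_1)\s_0\big)+o(1)$. Your product-to-sum repackaging of the complex expression $\mathscr{L}_1-i\mathscr{L}_2$ is simply a tidier organisation of the same estimates (the paper instead bounds the terms of \eqref{L1}--\eqref{L2} directly, using a triangle inequality for $\mathscr{L}_2$), and your remark that the conclusion for $\mathscr{L}_1$ should be read as $\limsup_{\w\to\infty}|\mathscr{L}_1|\le1$, since $\mathscr{L}_1$ oscillates and has no limit, is a welcome clarification of the paper's slightly loose statement.
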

\begin{proof}
    From Lemma \ref{r limit}, we have that, as $\w\to\infty$,
    \begin{align*}
        |\r_1|\to1, \ \ \ |\r_2|\to0
    \end{align*}
    and from \eqref{def disp}, we have that 
    \begin{align*}
        \s_0 \to \infty.
    \end{align*}
    So, it is essential to understand the behaviour of $\s_0\r_2$ as $\w\to\infty$. Using the same notations as in the proof of Lemma \ref{r limit}, we have, without loss of generality on the $\pm$ of \eqref{r as a+ib},
    \begin{align*}
        \r_2 = \frac{b(\w)}{|b(\w)|} \sqrt{\frac{\sqrt{a(\w)^2 + b(\w)^2} - a(\w)}{2}},
    \end{align*}
    and hence, from \eqref{def disp}, we have
    \begin{align*}
        \s_0 \r_2 = \sqrt{\m_0\ve_0} \frac{b(\w)}{|b(\w)|} \sqrt{\frac{\sqrt{\w^4\Big(a(\w)^2 + b(\w)^2\Big)} - \w^2 a(\w)}{2}}.
    \end{align*}
    From \eqref{a and b asymptotic behaviour}, we see that, as $\w\to\infty$,
    \begin{align*}
        \w^4 a(\w)^2 = \w^4 + O(1), \ \ \ \w^2 a(\w) = \w^2 + O(1) \ \ \ \text{ and } \ \ \ \w^4 b(\w)^2 = O\left(\frac{1}{\w^2}\right).
    \end{align*}
    Hence, as $\w\to\infty$,
    \begin{align*}
        \sqrt{\frac{\sqrt{\w^4\Big(a(\w)^2 + b(\w)^2\Big)} - \w^2 a(\w)}{2}} \to 0,
    \end{align*}
    which gives,
    \begin{align*}
        \lim_{\w\to\infty} \s_0 \r_2 = 0,
    \end{align*}
    and so
    \begin{align}\label{limit cosh sinh}
        \lim_{\w\to\infty} |\cosh(\s_0\r_2)| = 1 \ \ \ \text{ and } \ \ \ \lim_{\w\to\infty} |\sinh(\s_0\r_2)| = 0.
    \end{align}
    Thus, \eqref{L1} gives
    \begin{align*}
        \lim_{\w\to\infty}|\mathscr{L}_1| &= \lim_{\w\to\infty}\Big|\cos(\s_0)\cos(\s_0 \r_1) - \sin(\s_0)\sin(\s_0 \r_1)\Big|\\
        &=\lim_{\w\to\infty} \Big|\cos\Big(\s_0(1+\r_1)\Big)\Big| \leq 1,
    \end{align*}
    which is the desired bound for $\mathscr{L}_1$. Similarly, from the triangle inequality applied on \eqref{L2}, we have
    \begin{align}\label{L2 bound}
    \begin{aligned}
        |\mathscr{L}_2| \leq |\sinh(\s_0\r_2)| + \frac{1}{2(\r_1^2+\r_2^2)}\Big[ |\r_2&(\r_2^2-1+\r_1^2)||\cosh(\s_0\r_2)| +\\
        &+ |\r_1|(1+\r_1^2+\r_2^2)|\sinh(\s_0\r_2)| \Big].
    \end{aligned}
    \end{align}
    Using Lemma \ref{r limit} and \eqref{limit cosh sinh}, we obtain
    \begin{align*}
        \lim_{\w\to\infty} \mathscr{L}_2 = 0.
    \end{align*}
    This concludes the proof.
\end{proof}

Using these results, we will describe the behaviour of the imaginary part $\k_2$ of the quasiperiodicity $\k\in\mathbb{C}$ as the frequency tends to infinity, i.e. $\w\to\infty$.

\begin{proposition}\label{im decay}
    Let us consider a complex quasiperiodicity $\k\in \mathbb{C}$ satisfying the dispersion relation \eqref{1D dispersion relation} with $\a,\b,\g\in\mathbb{R}_{>0}$. Then, it holds that
    \begin{align}\label{limi a_2}
        \lim_{\w\to\infty} \Im(\k) = 0.
    \end{align}
\end{proposition}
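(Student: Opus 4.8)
The plan is to deduce the limit directly from the explicit formula \eqref{a_2} for $\Im(\k)$ together with the asymptotic information collected in Lemmas~\ref{r limit} and~\ref{limit L1 L2}. Recall from \eqref{a_2} that
\begin{align*}
    \Im(\k) = \frac{1}{2}\,\mathrm{arcsinh}\left( \pm \sqrt{ \tfrac{1}{2}\Big[\mathscr{L}_1^2+\mathscr{L}_2^2-1+\sqrt{(1-\mathscr{L}_1^2-\mathscr{L}_2^2)^2+4\mathscr{L}_2^2}\Big] } \right),
\end{align*}
so since $\mathrm{arcsinh}$ is continuous with $\mathrm{arcsinh}(0)=0$, it suffices to show that the quantity $Q(\w):=\tfrac12\big[\mathscr{L}_1^2+\mathscr{L}_2^2-1+\sqrt{(1-\mathscr{L}_1^2-\mathscr{L}_2^2)^2+4\mathscr{L}_2^2}\big]$ tends to $0$ as $\w\to\infty$.

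First I would invoke Lemma~\ref{limit L1 L2}, which gives $\mathscr{L}_2(\w)\to 0$ and $|\mathscr{L}_1(\w)|\le 1$ in the limit. The key additional observation is that in the limiting regime $\mathscr{L}_1$ behaves like $\cos(\s_0(1+\r_1))$, whose square is at most $1$; this already forces $1-\mathscr{L}_1^2\ge 0$ asymptotically. Then I would write $1-\mathscr{L}_1^2-\mathscr{L}_2^2 = (1-\mathscr{L}_1^2)-\mathscr{L}_2^2$ and note that, since $0\le 1-\mathscr{L}_1^2\le 1$, the quantity under the square root satisfies
\begin{align*}
    (1-\mathscr{L}_1^2-\mathscr{L}_2^2)^2 + 4\mathscr{L}_2^2 = \big[(1-\mathscr{L}_1^2)+\mathscr{L}_2^2\big]^2 + \big(4\mathscr{L}_2^2 - 4(1-\mathscr{L}_1^2)\mathscr{L}_2^2\big) = \big[(1-\mathscr{L}_1^2)+\mathscr{L}_2^2\big]^2 + 4(1-\mathscr{L}_1^2+?)\cdots
\end{align*}
— more cleanly, I would simply bound $Q(\w)$ from above and below. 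For the lower bound, $Q(\w)\ge \tfrac12[\mathscr{L}_1^2+\mathscr{L}_2^2-1+|1-\mathscr{L}_1^2-\mathscr{L}_2^2|]\ge 0$ (and once $\mathscr{L}_1^2+\mathscr{L}_2^2\le 1$ this equals $0$). For the upper bound, using $\sqrt{(1-\mathscr{L}_1^2-\mathscr{L}_2^2)^2+4\mathscr{L}_2^2}\le |1-\mathscr{L}_1^2-\mathscr{L}_2^2| + 2|\mathscr{L}_2|$, one gets $Q(\w)\le \tfrac12[\mathscr{L}_1^2+\mathscr{L}_2^2-1+|1-\mathscr{L}_1^2-\mathscr{L}_2^2|] + |\mathscr{L}_2|$. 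In the regime where $\mathscr{L}_1^2+\mathscr{L}_2^2\le 1$ (which holds for $\w$ large up to an asymptotically vanishing error, by Lemma~\ref{limit L1 L2}) the bracket vanishes, leaving $0\le Q(\w)\le |\mathscr{L}_2(\w)| + o(1) \to 0$. Hence $Q(\w)\to 0$, and by continuity of $\mathrm{arcsinh}$ at $0$ we conclude $\Im(\k)\to 0$.

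The main obstacle, and the point that needs the most care, is the fact that Lemma~\ref{limit L1 L2} only gives $|\mathscr{L}_1|\le 1$ \emph{in the limit} rather than a uniform bound for all large $\w$, and strictly speaking $\mathscr{L}_1$ could slightly exceed $1$ along a subsequence before settling down; so the clean identity $1-\mathscr{L}_1^2\ge0$ must be replaced by $1-\mathscr{L}_1^2 \ge -\eta(\w)$ with $\eta(\w)\to 0$. I would handle this by fixing $\ve>0$, choosing $\w$ large enough that $|\mathscr{L}_1|\le 1+\ve$ and $|\mathscr{L}_2|\le\ve$ simultaneously (possible by the two lemmas), and then showing $Q(\w) = O(\ve)$ by the elementary estimate above with the error terms tracked explicitly; letting $\ve\to 0$ then gives $\limsup_{\w\to\infty} Q(\w)=0$, and since $Q\ge 0$ this yields $\lim_{\w\to\infty}Q(\w)=0$. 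A cosmetically cleaner alternative is to go back to the system \eqref{system with L}: from $\cos(2\k_1)\cosh(2\k_2)=\mathscr{L}_1$ and $\sin(2\k_1)\sinh(2\k_2)=\mathscr{L}_2$ one obtains $\cosh^2(2\k_2) \le \mathscr{L}_1^2 + \mathscr{L}_2^2\cdot\tfrac{\cosh^2(2\k_2)}{\sinh^2(2\k_2)}$ and with a little manipulation $\sinh^2(2\k_2)\le \tfrac{\mathscr{L}_1^2+\mathscr{L}_2^2-1 + \sqrt{\cdots}}{2}$, recovering the same bound; either way the punchline is that $\sinh^2(2\Im\k)$ is squeezed between $0$ and a quantity governed by $\max(|\mathscr{L}_2|, 1-\min(1,\mathscr{L}_1^2))\to 0$, so $\Im(\k)\to 0$.
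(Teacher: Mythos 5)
Your proposal is correct and follows essentially the same route as the paper: substitute into the explicit formula \eqref{a_2}, use Lemma~\ref{limit L1 L2} to show the quantity under the square root tends to zero, and conclude by continuity of $\mathrm{arcsinh}$ at $0$. Your extra $\ve$-bookkeeping around the fact that $|\mathscr{L}_1|\le 1$ holds only asymptotically is a point the paper glosses over (it writes the limit as if $\mathscr{L}_1$ converged, when it merely stays bounded by $1$ in the limit), so your version is, if anything, slightly more careful.
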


\begin{proof}
    Indeed, since $\k\in\mathbb{C}$, let us define $\k_1:=\Re(\k)$ and $\k_2:=\Im(\k)$. Then, from \eqref{a_2}, we have that $\k_2$ is given by
    \begin{align*}
        \k_2 = \frac{1}{2} \text{arcsinh}\left( \pm \sqrt{ \frac{1}{2} \Big[\mathscr{L}_1^2+\mathscr{L}_2^2-1+\sqrt{(1-\mathscr{L}_1^2-\mathscr{L}_2^2)^2+4\mathscr{L}_2^2}\Big] } \right)
    \end{align*}
    From Lemma \ref{limit L1 L2}, we have that, as $\w\to\infty$, $\mathscr{L}_1$ remains bounded, whereas $\mathscr{L}_2\to0$. Thus, the following holds
    \begin{align*}
        \lim_{\w\to\infty}\Big[\mathscr{L}_1^2+\mathscr{L}_2^2-1+\sqrt{(1-\mathscr{L}_1^2-\mathscr{L}_2^2)^2+4\mathscr{L}_2^2}\Big] &=  \Big[\mathscr{L}_1^2-1+|1-\mathscr{L}_1^2|\Big] =0,
    \end{align*}
    since we have that $|\mathscr{L}_1|\leq1$ also from Lemma \ref{limit L1 L2}. Then, from the continuity of the $\mathrm{arcsinh}(\cdot)$ function, the desired result follows.
\end{proof}

The decay predicted by Proposition \ref{im decay} is shown in Figure \ref{fig:HPdispersion}. Due to the damping in the model, the imaginary part has discernible peaks at the first few gaps, but then decays steadily to zero at higher frequencies.

\subsection{The effect of singularities and damping}

As mentioned before, the dispersion relation of the halide perovskite particles leads to dispersion curves which are not trivial to understand in terms of the traditional viewpoint of band gaps. In order to understand the behaviour, we will examine each distint feature of the halide perovskite permittivity, to understand the effect it has on the spectrum of the photonic crystal.

In particular, we will begin with the simplest case when the permittivity is real and constant with respect to the frequency $\w$. Then, we will introduce a dispersive behaviour to the permittivity by adding singularities at non-zero frequencies. We will initially suppose that these poles lie on the real axis and will study the behaviour close to these regions. Finally, we will study the effect of introducing a complex permittivity, corresponding to damping. Taken together, these results will allow us to explain the spectra the halide perovskite photonic crystal.

\subsubsection{Constant permittivity} \label{sec:realconst}

The first case we will consider is the one of a real-valued, non-dispersive permittivity, constant with respect to the frequency $\w$. In our setting this translates into having \eqref{prmtvt} with $\b=\g=0$ and $\a>0$, i.e.
\begin{align}\label{e_cst}
    \ve(\w) = \ve_0 + \a \in \mathbb{R}_{>0}.
\end{align}
This setting has been studied quite extensively. We refer to \cite{brillouin1953wave}, as a classical reference for studying the dispersive nature of waves in periodic systems. In Figure~\ref{fig:plot:e_cst}, we provide an example for the dispersion curves when the permittivity is constant and real. It is worth noting the following result.

\begin{figure} 
    \centering
    \includegraphics[width=0.6\linewidth]{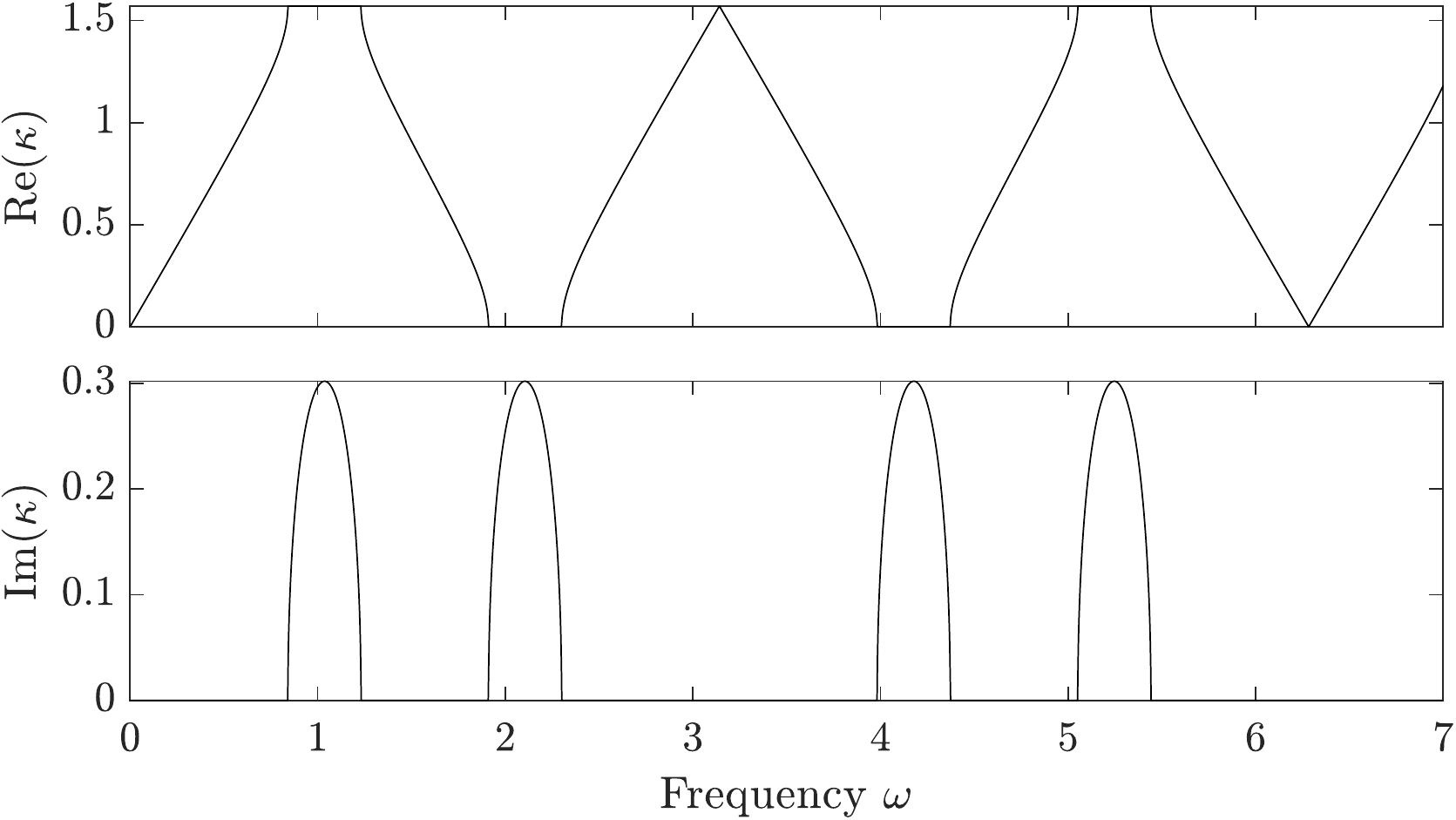}
    \caption{The dispersion relation of a photonic crystal with frequency-independent material parameters. We model a material with permittivity given by \eqref{prmtvt} with $\alpha=1$, $\beta=0$ and $\gamma=0$. The frequency $\omega$ is chosen to be real and the Bloch parameter $\kappa$ allowed to take complex values. The permittivity is never singular in this case. }
    \label{fig:plot:e_cst}
\end{figure}

\begin{lemma} \label{lem:realsymm}
    Let $\ve(\w)$ be the real-valued, non-dispersive permittivity given by \eqref{e_cst}. Then, if $\k\in\mathbb{C}$ is a quasiperiodicity satisfying \eqref{1D dispersion relation} for a given frequency $\w\in\mathbb{R}$, then so does $\overline{\k}\in\mathbb{C}$.
\end{lemma}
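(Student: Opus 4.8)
The plan is to show that when $\varepsilon(\omega) = \varepsilon_0 + \alpha$ is a positive real constant, the dispersion relation \eqref{1D dispersion relation} has real coefficients as a function of the unknown $\cos(2\kappa)$, so that complex conjugation maps solutions to solutions. First I would observe that under \eqref{e_cst} the contrast $\rho = \sqrt{(\varepsilon_0+\alpha)/\varepsilon_0}$ is a positive \emph{real} number, and that $\sigma_0 = \omega\sqrt{\varepsilon_0\mu_0}$ is real for real $\omega$. Hence the entire right-hand side of \eqref{1D dispersion relation}, namely $\cos(\sigma_0)\cos(\rho\sigma_0) - \tfrac{1+\rho^2}{2\rho}\sin(\sigma_0)\sin(\rho\sigma_0)$, is a real number; call it $R(\omega)\in\mathbb{R}$.

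Next I would use the fact that $\kappa\in\mathbb{C}$ satisfies \eqref{1D dispersion relation} precisely when $\cos(2\kappa) = R(\omega)$. Taking complex conjugates of both sides and using that $R(\omega)$ is real gives $\overline{\cos(2\kappa)} = R(\omega)$. Since $\cos$ is an entire function with real Taylor coefficients, $\overline{\cos(z)} = \cos(\overline{z})$ for all $z\in\mathbb{C}$; applying this with $z = 2\kappa$ yields $\cos(2\overline{\kappa}) = \cos(\overline{2\kappa}) = \overline{\cos(2\kappa)} = R(\omega)$. Therefore $\overline{\kappa}$ also satisfies \eqref{1D dispersion relation}, which is the claim.

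There is no real obstacle here — the only thing to be careful about is making explicit why $\overline{\cos(2\kappa)} = \cos(2\overline{\kappa})$ (the real-analyticity / real-coefficient argument above), and why $R(\omega)$ is genuinely real, which in turn rests entirely on $\rho$ being real and positive under \eqref{e_cst}. One could alternatively phrase the conjugation-invariance directly in terms of the system \eqref{system cst} or \eqref{eq:2}: with $\varepsilon(\omega)$ real, equation \eqref{eq:2} is a quadratic in $e^{2i\kappa}$ with real coefficients, so its roots come in conjugate pairs $e^{2i\kappa}$ and $\overline{e^{2i\kappa}} = e^{-2i\overline{\kappa}}$, and combining this with Lemma~\ref{Opposite quasiperiodicities} (which allows $\kappa \mapsto -\kappa$) again gives $\overline{\kappa}$. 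I would present the short $\cos(2\kappa) = R(\omega)$ argument as the main line, since it is the cleanest.
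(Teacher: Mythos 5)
Your proof is correct, and it takes a genuinely different route from the paper. You argue by conjugation symmetry: since $\rho$ and $\sigma_0$ are real under \eqref{e_cst}, the right-hand side $R(\omega)$ of \eqref{1D dispersion relation} is real, and the identity $\overline{\cos(2\kappa)} = \cos(2\overline{\kappa})$ (valid because $\cos$ has real Taylor coefficients) immediately gives $\cos(2\overline{\kappa}) = R(\omega)$. The paper instead splits $\kappa = \kappa_1 + i\kappa_2$ and the equation $\cos(2\kappa) = A$ into its real and imaginary parts, obtaining $\cos(2\kappa_1)\cosh(2\kappa_2) = A$ and $\sin(2\kappa_1)\sinh(2\kappa_2) = 0$, and then treats the two resulting cases ($\kappa_2 = 0$, or $\kappa_1 = \tfrac{m}{2}\pi$ with $\cosh$ even in $\kappa_2$) separately. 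Your argument is shorter and cleaner as a proof of the stated claim; the paper's case analysis is longer but yields as a by-product the structural fact, used in the discussion immediately following the lemma, that in this non-dispersive setting $\kappa$ is either purely real (inside a band) or has real part pinned to a half-integer multiple of $\pi$ with nonzero imaginary part (inside a gap). Your secondary remark --- that \eqref{eq:2} is a quadratic in $e^{2i\kappa}$ with real coefficients, so its roots come in conjugate pairs, which combined with Lemma~\ref{Opposite quasiperiodicities} again yields $\overline{\kappa}$ --- is also valid. No gaps.
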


\begin{proof}
    Indeed, since $\ve(\w)\in\mathbb{R}_{>0}$, then $\r\in\mathbb{R}_{>0}$. Let us take $\k\in\mathbb{C}$ satisfying \eqref{1D dispersion relation}. Then, we can write $\k=\k_1+i\k_2$, with $\k_1,\k_2\in\mathbb{R}$. Now, since $\r>0$, we can write
    \begin{align*}
        \cos(\s_0) \cos(\r\s_0) - \frac{1+\r^2}{2\r} \sin(\s_0)\sin(\r\s_0) =: A>0.
    \end{align*}
    Thus, \eqref{1D dispersion relation} gives us
    \begin{align*}
        \cos(2\k_1+2i\k_2) = A,
    \end{align*}
    which becomes the following system
    \begin{align*}
        \begin{cases}
            \cos(2\k_1) \cdot \cosh(2\k_2) = A, \\
            \sin(2\k_1) \cdot \sinh(2\k_2) = 0.
        \end{cases}
    \end{align*}
    This implies that
    \begin{align*}
        \k_2 = 0 \ \ \ \text{ or } \ \ \ \k_1 = \frac{m}{2}\pi, \ m\in\mathbb{Z}.
    \end{align*}
    If $\k_2=0$, then $\k=\k_1\in\mathbb{R}$. Thus, $\k=\overline{\k}$, which gives the desired result. If $\k_1 = \frac{m}{2}\pi$, for $m\in\mathbb{Z}$, then $\k$ satisfies \eqref{1D dispersion relation} if and only if
    \begin{align}\label{random}
        \cosh(2\k_2) = \pm A.
    \end{align}
    Since $\cosh(\cdot)$ is an even function, we have that $-\k_2$ satisfies \eqref{random} for the same frequency $\w$. Hence, $\overline{\k}=\k_1-i\k_2$ satisfies \eqref{1D dispersion relation} and this concludes the proof. 
\end{proof}

Crucially, the dispersion curves shown in Figure~\ref{fig:plot:e_cst} consist of a countable sequence of disjoint bands in which $\k$ is real valued. Between each band there is a band gap, defined in the sense of Definition~\ref{BGreal}, in which $\k$ is purely imaginary, corresponding to the decay of the wave. The occurence of $\k$ being either purely real or purely imaginary is the mechanism behind Lemma~\ref{lem:realsymm}. As we will see below, when we add singularities or damping to the model, the band gap structure is less straightforward to interpret.

\subsubsection{Singular permittivity with no damping} \label{sec:sing}

Let us now study the case where the permittivity has a dispersive (and singular) character with respect to the frequency $\omega$, but there is no damping, i.e. we consider $\a,\b>0$ and $\g=0$. This implies that
\begin{align}\label{prmtvt no damping}
    \varepsilon(\w) = \ve_0 + \frac{\alpha}{1-\beta\w^2}.
\end{align}
The interesting aspect in this setting is the existence of real poles for the permittivity. They are given by
\begin{align*}
    \w^*_{\pm} = \pm \frac{1}{\sqrt{\beta}}.
\end{align*}
In Figure \ref{fig:plot:e_no_damping}, we observe that near the pole of the permittivity there are infinitely many band-gaps. This was similarly observed recently by \cite{touboul2023dispersive}. Noting that a band gap occurs when the magnitude of the right-hand side of \eqref{1D dispersion relation} is greater than one. We define the function
\begin{align}\label{def:f}
    f(\w) := \cos\Big(\s_0(\w)\Big)\cos\Big(\r(\w)\s_0(\w)\Big) - \frac{1+\r(\w)^2}{2\r(\w)} \sin\Big(\s_0(\w)\Big) \sin\Big(\r(\w)\s_0(\w)\Big),
\end{align}
which is the right-hand side of \eqref{1D dispersion relation}. We will prove that this takes values greater than one on a countably infinite number of disjoint intervals within any neighbourhood of the singularity. To do so, we will introduce the following notation, which will be used in our analysis.

\begin{figure}
    \centering
    \includegraphics[width=0.6\linewidth]{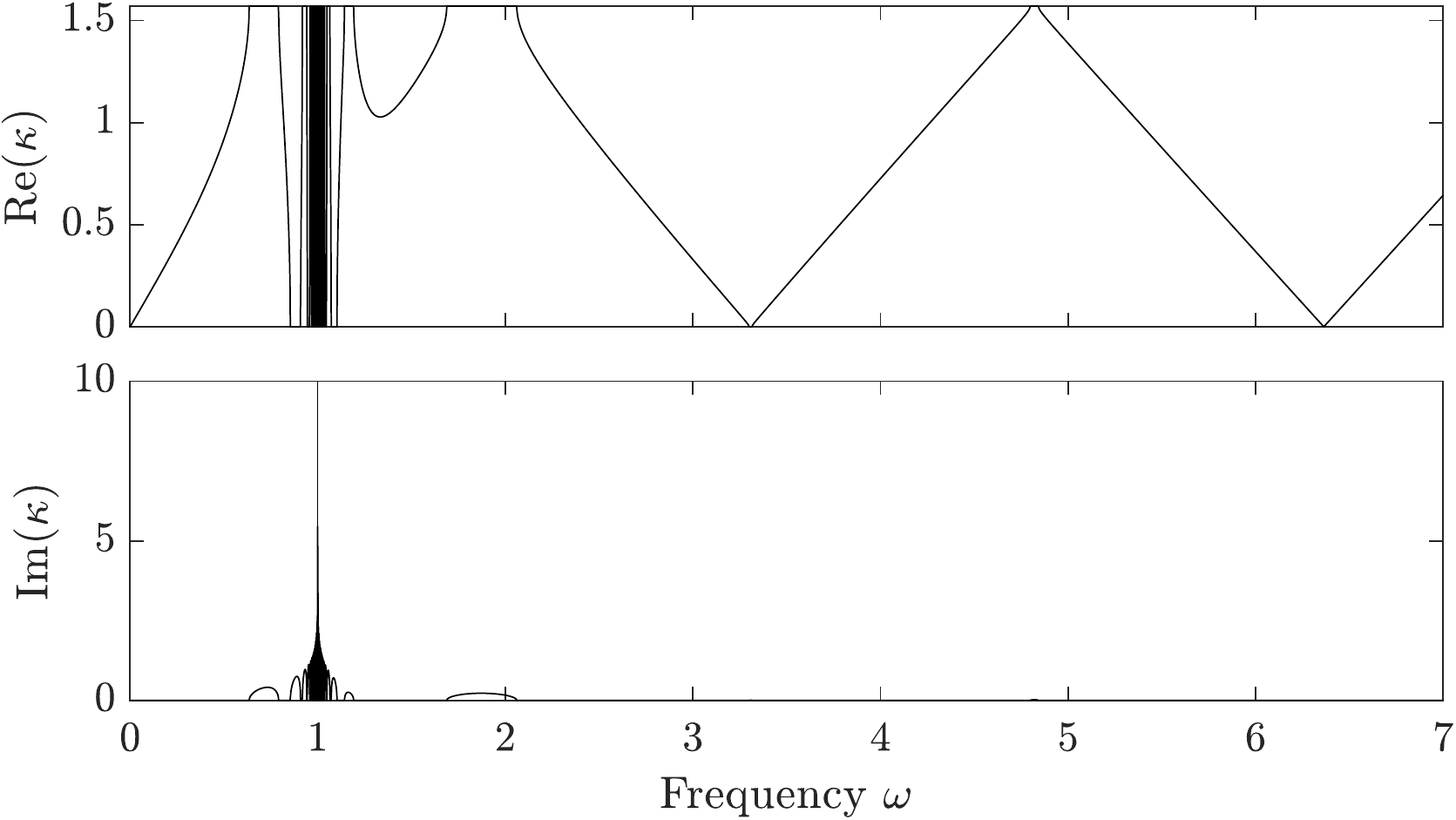}
    \begin{tikzpicture}
    \node[anchor=west] at (-1.75,1.5) {Singularities:};
    \draw[<->,\axiscol] (-1.5,0) -- (1.5,0);
    \draw[<->,\axiscol] (0,-1) -- (0,1);
    \node[anchor=north,scale=0.6,\axiscol] at (1.5,0) {Re$(\omega)$};
    \node[anchor=east,scale=0.6,\axiscol] at (0,1) {Im$(\omega)$};
    
    \node at (0.78,0) {$\times$};
    \node[anchor=south] at (0.78,0) {$\omega^*_+$};

    \node at (-0.78,0) {$\times$};
    \node[anchor=south] at (-0.78,0) {$\omega^*_-$};
    
    \draw[\axiscol] (0.78,0.1) -- (0.78,-0.1) node[at end,anchor=north,scale=0.5]{1};
    \draw[\axiscol] (-0.78,0.1) -- (-0.78,-0.1) node[at end,anchor=north,scale=0.5]{-1};
    
    \node[white] at (-2,-3) {.}; 
    \end{tikzpicture}

    \vspace{0.4cm}

    \includegraphics[width=0.48\linewidth]{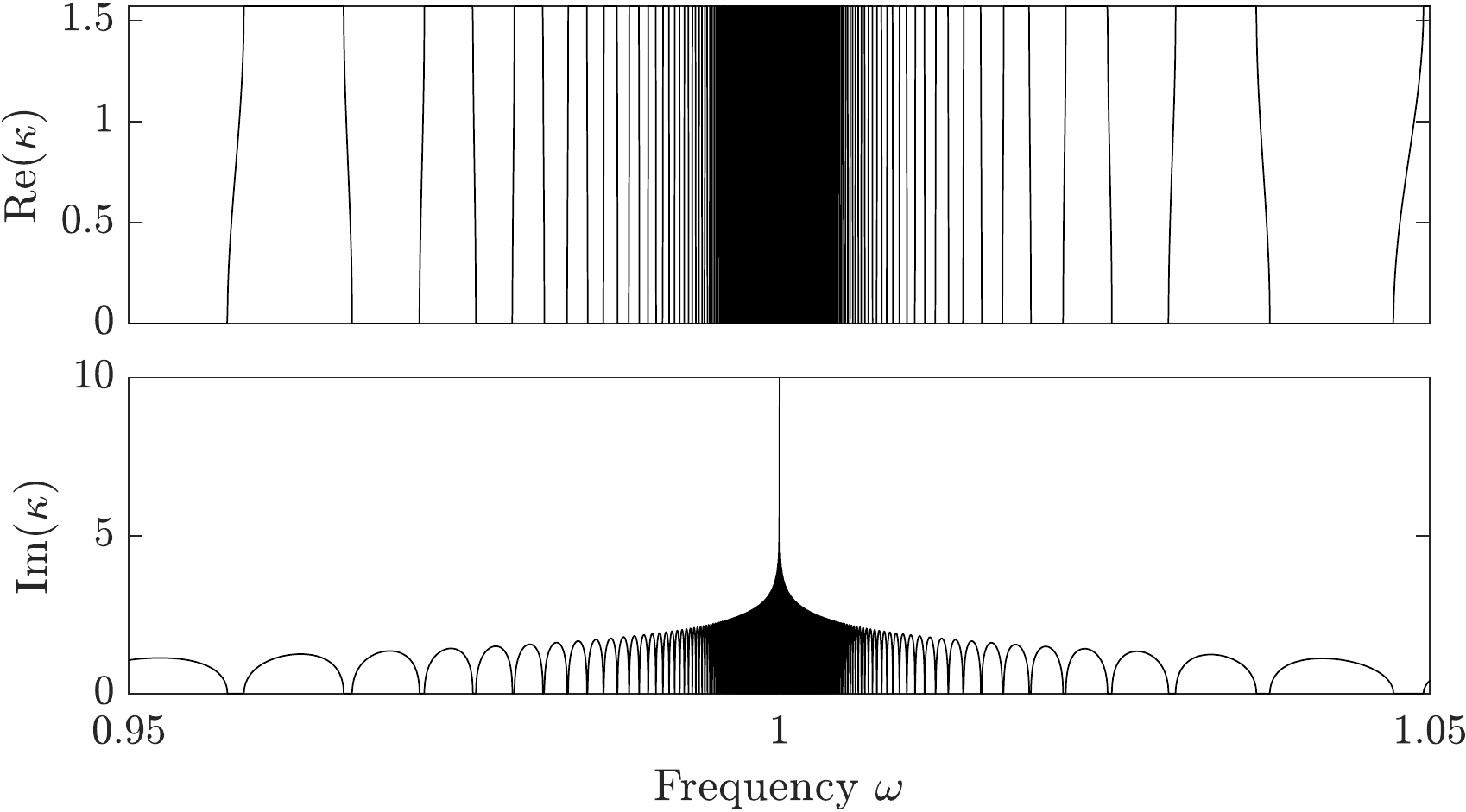}
    \includegraphics[width=0.48\linewidth]{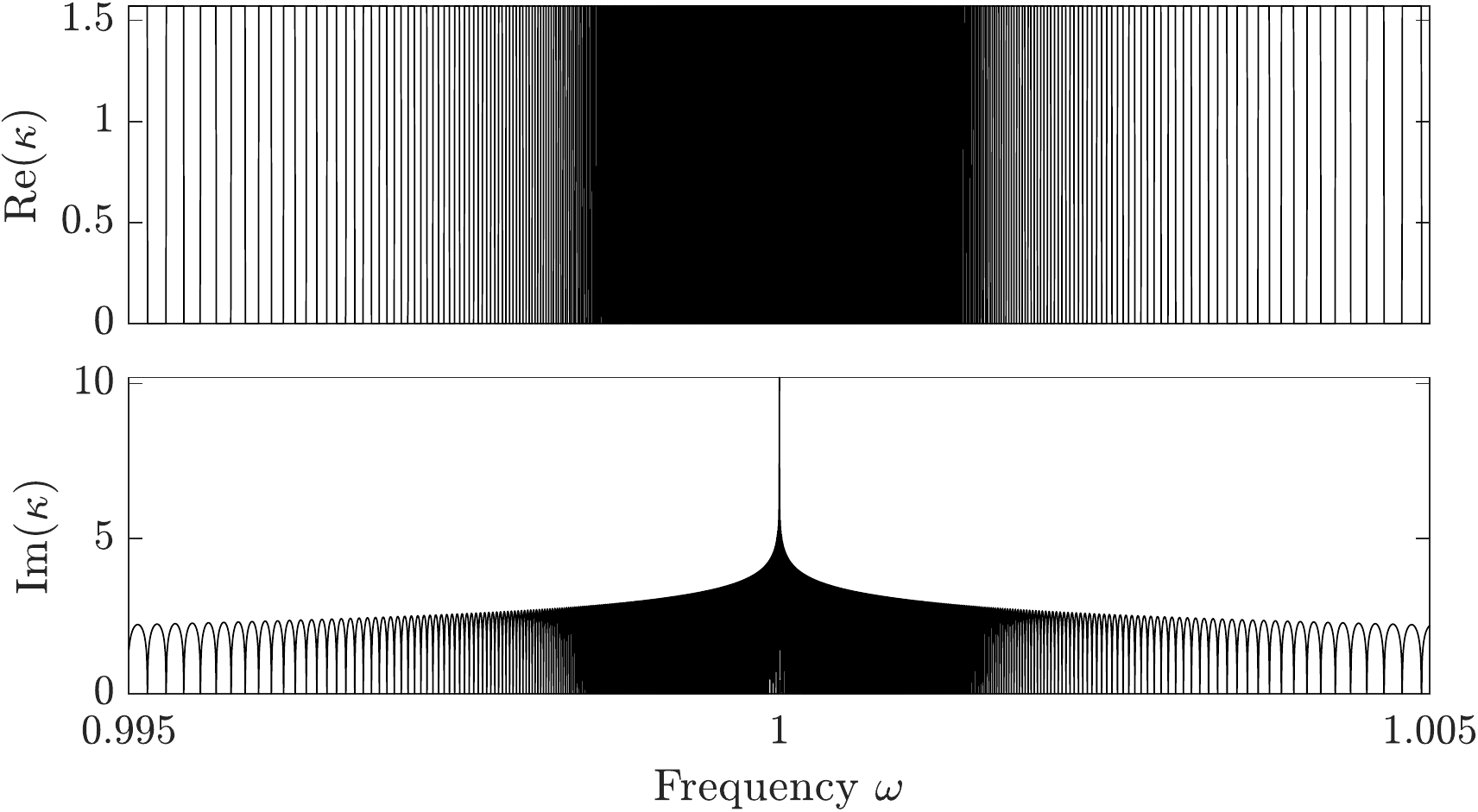}
    
    \caption{The dispersion relation of a photonic crystal with frequency-independent material parameters. We model a material with permittivity given by \eqref{prmtvt} with $\alpha=1$, $\beta=1$ and $\gamma=0$. The frequency $\omega$ is chosen to be real and the Bloch parameter $\kappa$ allowed to take complex values. The permittivity is singular when $\omega=1$. The lower two plots are display the same dispersion curves, zoomed into the region around the singularity.}
    \label{fig:plot:e_no_damping}
\end{figure}

\begin{notation}
    Let $x,y\in\mathbb{R}$. Then, we use $x \downarrow y$ when $x \to y$ and $x>y$. Similarly, we use $x \uparrow y$ when $x \to y$ and $x<y$.
\end{notation}

Then, close to a permittivity pole, the following holds.

\begin{theorem}\label{thm:inf b-g}
    Let $\w^*$ denote a pole of the permittivity $\ve(\w)$ given by \eqref{prmtvt no damping}, i.e. $\w^*\in\left\{\pm\frac{1}{\sqrt{\beta}}\right\}$. Then, for $\d>0$, the intervals $[\w^*-\d,\w^*)$ and $(\w^*,\w^*+\d]$ contain infinitely many disjoint sub-intervals, denoted by $\mathcal{I}_{i}$ and $\mathcal{J}_{i}$, $i=1,2,\dots$, respectively, that are band gaps.
\end{theorem}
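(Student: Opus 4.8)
The starting point, already observed in the text preceding the statement, is the criterion: a frequency $\w\in\mathbb{R}$ lies in a band gap in the sense of Definition~\ref{BGreal} if and only if $|f(\w)|>1$, where $f$ is the function in \eqref{def:f}. Indeed the dispersion relation \eqref{1D dispersion relation} reads $\cos(2\k)=f(\w)$, which has no real solution $\k$ precisely when $|f(\w)|>1$. So the plan is to exhibit, inside each of the punctured one-sided neighbourhoods $[\w^*-\d,\w^*)$ and $(\w^*,\w^*+\d]$, a sequence of disjoint open subintervals on which $|f|>1$, separated by points where $|f|\le 1$; the connected components of $\{|f|>1\}$ that contain these subintervals are then the required $\mathcal{I}_i$ (resp. $\mathcal{J}_i$).

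Since a smaller one-sided interval sits inside a larger one, it suffices to treat all sufficiently small $\d$. Exactly one of the two sides of $\w^*$ is the side on which $\ve(\w)\to+\infty$; assume without loss of generality it is $[\w^*-\d,\w^*)$. There, for $\d$ small, $\r(\w)=\sqrt{\ve(\w)/\ve_0}$ is real and positive with $\r(\w)\to+\infty$ as $\w\uparrow\w^*$, while $\s_0(\w)=\w\sqrt{\ve_0\m_0}\to\s_0^*:=\w^*\sqrt{\ve_0\m_0}$. Setting $\th(\w):=\r(\w)\s_0(\w)=\w\sqrt{\m_0\ve(\w)}$, we have $f=\cos\s_0\cos\th-\tfrac{1+\r^2}{2\r}\sin\s_0\sin\th$, where $\th$ is continuous on $[\w^*-\d,\w^*)$ with $\th(\w)\to+\infty$ (monotonically, close enough to $\w^*$). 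Thus $\cos\th$ and $\sin\th$ oscillate while the amplitude $\tfrac{1+\r^2}{2\r}\ge\tfrac{\r}{2}$ diverges, which is the mechanism producing the infinitely many gaps.

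By the intermediate value theorem I would then pick, for $n$ large, points $\w_n\uparrow\w^*$ with $\th(\w_n)=(n+\tfrac12)\pi$, interlaced with points $\w_n'$ satisfying $\w_{n-1}<\w_{n-1}'<\w_n<\w_n'$ and $\th(\w_n')=(n+1)\pi$. At $\w_n$ one has $\cos\th=0$ and $|\sin\th|=1$, so $|f(\w_n)|=\tfrac{1+\r(\w_n)^2}{2\r(\w_n)}\,|\sin\s_0(\w_n)|\ge\tfrac12\r(\w_n)\,|\sin\s_0(\w_n)|\to\infty$, hence $|f(\w_n)|>1$ for all large $n$; at $\w_n'$ one has $\sin\th=0$, so $f(\w_n')=\pm\cos\s_0(\w_n')$ and $|f(\w_n')|\le 1$. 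Consequently the connected component $\mathcal{I}_n$ of $\{\w\in[\w^*-\d,\w^*):|f(\w)|>1\}$ containing $\w_n$ is an open interval trapped inside $(\w_{n-1}',\w_n')$; these components are pairwise disjoint, accumulate at $\w^*$, and are band gaps by the criterion above. Relabelling gives infinitely many disjoint band-gap subintervals in $[\w^*-\d,\w^*)$, and the same scheme — now with $\r$ purely imaginary and $f$ rewritten through $\cosh$ and $\sinh$ — is meant to yield the $\mathcal{J}_i$ in $(\w^*,\w^*+\d]$.

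The step I expect to be the real obstacle is the quantitative estimate $\tfrac12\r(\w_n)\,|\sin\s_0(\w_n)|\to\infty$: it rests on the non-degeneracy $\sin\s_0^*\ne 0$ (equivalently $\w^*\sqrt{\ve_0\m_0}\notin\pi\mathbb{Z}$), without which $|\sin\s_0|=O(|\w-\w^*|)$ cancels the $O(|\w-\w^*|^{-1/2})$ growth of $\r$, $f$ stays bounded near $\w^*$, and the conclusion fails — so some such assumption (or a restriction of the parameters $\ve_0,\m_0,\b$) seems to be needed. A second point deserving care is the side $\ve(\w)\to-\infty$: there $f$ does not oscillate but grows (in fact exponentially in $\s_0\,|\Im\r|$), so one must check whether this genuinely produces infinitely many disjoint gaps accumulating at $\w^*$ or only finitely many, since the interlacing argument above does not transfer verbatim. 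The remaining ingredients — continuity of $f$ on $[\w^*-\d,\w^*)$ for small $\d$, and eventual monotonicity of $\th$ so that the $\w_n,\w_n'$ are well defined and correctly ordered — are routine.
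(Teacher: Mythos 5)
Your proposal follows essentially the same route as the paper's proof: the band-gap criterion $|f(\w)|>1$, the divergence of the amplitude $\tfrac{1+\r^2}{2\r}$ as $\r\to+\infty$, and the sampling at points where $\sin(\r\s_0)=\pm1$ (your $\th(\w_n)=(n+\tfrac12)\pi$ are the paper's $\w_i^{(\pm)}$) are exactly the ingredients used there. Your treatment of disjointness is in fact cleaner: by interlacing with the zeros $\th(\w_n')=(n+1)\pi$, where $|f|\le 1$, you separate the connected components of $\{|f|>1\}$ without choosing any radius. The paper instead fixes a single $s>0$ for \emph{all} $i$, which cannot work uniformly since the points $\w_i^{(\pm)}$ accumulate at $\w^*$ and the half-widths must shrink; your version quietly repairs that defect.

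Both caveats you raise are genuine, and they are present in the paper's argument as well rather than being resolved by it. First, the inequality $\tfrac{1+\r^2}{2\r}>\tfrac{1}{\sin(\s_0)}$ silently assumes $\sin\bigl(\s_0(\w^*)\bigr)\neq 0$; if $\w^*\sqrt{\ve_0\m_0}\in\pi\mathbb{Z}$ then $\sin(\s_0)=O(|\w-\w^*|)$ while $\r=O(|\w-\w^*|^{-1/2})$, the product $\tfrac{1+\r^2}{2\r}\sin(\s_0)$ tends to $0$, and the mechanism producing $|f|>1$ disappears. Second, on the side of $\w^*$ where $\ve(\w)\to-\infty$, $\r$ is purely imaginary, and writing $\r=i\tau$ gives
\begin{equation*}
f(\w)=\cos(\s_0)\cosh(\tau\s_0)+\frac{\tau^2-1}{2\tau}\sin(\s_0)\sinh(\tau\s_0),
\end{equation*}
which grows without oscillating; the set $\{|f|>1\}$ near $\w^*$ on that side is then (generically) a \emph{single} interval, not infinitely many disjoint ones. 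The paper's closing remark that ``the proof remains the same'' as $\w\downarrow\w^*$ does not address this, so the half of the statement concerning that side is not actually supported by either argument. In short: your proof establishes the theorem on the $\ve\to+\infty$ side under the stated non-degeneracy assumption, which is precisely what the paper's proof legitimately establishes, and you have correctly located the two points where the published argument is incomplete.
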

\begin{proof}
    We will first prove this result for the interval $[\w^*-\d,\w)$, for $\d>0$. It suffices to show that there are infinitely many points $\w^{\dagger}_i\in[\w^*-\d,\w^*)$, $i=1,2,\dots,$ for $\d>0$ for which $f(\w_i^{\dagger})>1$ or $f(\w_i^{\dagger})<-1$. Then, the continuity of $f$ around these points gives us the existence of intervals of the form $\mathcal{I}_i := [\w_i^{\dagger}-s,\w_i^{\dagger}+s]$, for $i=1,2,\dots$, for small $s>0$, such that,
    \begin{align*}
        f(\w) > 1 \ \ \text{ or } \ \ f(\w)<-1, \ \ \ \forall \ \w\in \mathcal{I}_i, \ \ i=1,2,\dots.
    \end{align*}
    From \eqref{1D dispersion relation} and \eqref{def:f}, this gives
    \begin{align*}
        \cos(2\k) > 1 \ \ \text{ or } \ \  \cos(2\k)<-1, \ \ \ \forall \ \w\in \mathcal{I}_i, \ \ i=1,2,\dots,
    \end{align*}
    This is equivalent to the $\mathcal{I}_i$'s, $i=1,2,\dots$ being band gaps, since $\k$ becomes complex in these intervals, i.e. $|\Im(\k)|\ne0$. Hence, since $|\Im(\k)|$ is continuous with respect to $\w$, we get that it has a local maximum in each of the $\mathcal{I}_i$'s, for $i=1,2,\dots$.\\
    We observe that $\lim_{\w\uparrow\w^{*}}\ve(\w) = +\infty$.  Then, this implies that $\lim_{\w\uparrow\w^{*}}\r(\w) = +\infty$, and so, we get
    \begin{align*}
        \lim_{\w\uparrow\w^{*}} \frac{1+\r(\w)^2}{2\r(\w)} = +\infty.
    \end{align*}
    Also, as $\w\uparrow\w^{*}$, we have that $\s_0$ is constant and so, without loss of generality, we can assume that $\sin(\s_0)>0$ (the same argument holds for taking $\sin(\s_0)<0$). Hence, there exists $\d_1>0$ such that for all $\w\in[\w^*-\d_1,\w^*)$, we have that
    \begin{align}
         \frac{1+\r(\w)^2}{2\r(\w)} > \frac{1}{\sin\Big(\s_0(\w)\Big)} \geq 1.
    \end{align}
    Now, since $\lim_{\w\uparrow\w^{*}}\r(\w) = +\infty$, we have that
    \begin{align}\label{s_0 r}
        \lim_{\w\uparrow\w^{*}}\r(\w)\s_0(\w) = +\infty.
    \end{align}
    This implies that, for all $K>0$, there exists $\d_2>0$ such that for all $\w\in[\w^*-\d_2,\w^*)$ it holds that $|\r(\w)\s_0(\w)|>K$.\\
    Now, let $\d := \max\{\d_1,\d_2\}$ and let $I_{\d}^{(-)} := [\w^*-\d,\w^*)$. Then, \eqref{s_0 r} implies that the exist two families of infinitely many points in $I_{\d}^{(-)}$, denoted by  $\{\w^{(+)}_i\}_{i=1,\dots,\infty}$ and $\{\w^{(-)}_i\}_{i=1,\dots,\infty}$, such that, for all $i=1,\dots,\infty$, we have
    \begin{align}
        \sin\Big( \r(\w^{(+)}_i)\w^{(+)}_i \Big) = 1 \ \ \ \text{ and } \ \ \ \sin\Big( \r(\w^{(-)}_i)\w^{(-)}_i \Big) = -1.
    \end{align}
    We also note that this implies, for all $i=1,\dots,\infty$, that
    \begin{align}
        \cos\Big( \r(\w^{(+)}_i)\w^{(+)}_i \Big) = \cos\Big( \r(\w^{(-)}_i)\w^{(-)}_i \Big) = 0.
    \end{align}
    Thus, for all $i=1,\dots,\infty$, we have
    \begin{align}
        f \Big( \w^{(+)}_i \Big) = - \frac{1+\r(\w^{(+)}_i)^2}{2\r(\w^{(+)}_i)} \sin\Big(\s_0(\w^{(+)}_i)\Big) < -1
    \end{align}
    and
    \begin{align}
        f \Big( \w^{(-)}_i \Big) = \frac{1+\r(\w^{(-)}_i)^2}{2\r(\w^{(-)}_i)} \sin\Big(\s_0(\w^{(-)}_i)\Big) > 1
    \end{align}
    In particular, without loss of generality, let us assume that 
    $\w^{(+)}_0$ is the smallest of the elements in both families $\{\w^{(+)}_i\}_{i=1,\dots,\infty}$ and $\{\w^{(-)}_i\}_{i=1,\dots,\infty}$. Then, the periodicity of $\sin(\cdot)$ shows that the elements of these families respect the following ordering:
    \begin{align}
        \w^{(+)}_0 < \w^{(-)}_0 < \w^{(+)}_1 < \w^{(-)}_1 < \dots.
    \end{align}
    Now, the continuity of $f$ around these points allows us to take $s>0$ such that
    \begin{align*}
        f(\w) <-1, \ \ \ \forall \ \w \in \Big[\w_i^{(+)}-s, \w_i^{(+)}+s\Big], \ \ \ i=1,2,\dots,
    \end{align*}
    \begin{align*}
        f(\w) >1, \ \ \ \forall \ \w \in \Big[\w_i^{(-)}-s, \w_i^{(-)}+s\Big], \ \ \ i=1,2,\dots,
    \end{align*}
    and
    \begin{align*}
        \Big[\w_i^{(+)}-s, \w_i^{(+)}+s\Big] \bigcap \Big[\w_i^{(-)}-s, \w_i^{(-)}+s\Big] = \emptyset, \ \ i=1,2,\dots \ .
    \end{align*}
    Finally, the infinity of elements in the families $\{\w^{(+)}_i\}_{i=1,\dots,\infty}$ and $\{\w^{(-)}_i\}_{i=1,\dots,\infty}$ gives us the desired result.
    
    We note that for the neighborhood of the form $\w\in(\w^*,\w^*+\d]$ for $\d>0$, the proof remains the same with the slight change of taking the limits as $\w\downarrow\w^{*}$.
\end{proof}

In addition to the occurrence of a countable number of band gaps close to the pole, in Figure \eqref{fig:plot:e_no_damping}, we observe that there is an interesting behaviour of the imaginary part $\Im(\k)$ of the quasiperiodicity $\k$ as the frequency $\w\in\mathbb{R}$ approaches a permittivity pole. In fact, we see that close to a pole, $|\Im(\k)|$ becomes arbitrarily big. This due to the resonance occurring here and is strongly related to the existence of infinitely many band gaps close to the pole. Actually, it is  a corollary of Theorem \ref{thm:inf b-g}. 

\begin{corollary}\label{lemma:f(w)}
    Let $\w\in\mathbb{R}$ and $\k\in\mathbb{C}$ be the associated quasiperiodicity satisfying the dispersion relation \eqref{1D dispersion relation} and let $f(\w)$ be the function defined in \eqref{def:f}. Let $\w^*\in\mathbb{R}$ denote a pole of the undamped permittivity $\ve(\w)$, given by \eqref{prmtvt no damping}. Then, for all $K>0$, there exists $\d>0$, such that for all $p\in[-K,K]$, there exists $\w_p\in[\w^*-\d,\w^*)$ such that $\Im(\k(\w_p)) = p$. That is, $|\Im(\k(\w))|$ takes arbitrarily large values as $\w\uparrow\w^*$. The same result holds as $\w\downarrow\w^*$.
\end{corollary}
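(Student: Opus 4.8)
The plan is to reduce everything to a single intermediate-value argument for the function $\w\mapsto|\Im(\k(\w))|$, which I will show is continuous and single-valued near $\w^*$, using as the essential input the blow-up of the right-hand side $f(\w)$ of \eqref{1D dispersion relation} near $\w^*$ that is already exploited in the proof of Theorem~\ref{thm:inf b-g}.

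First I would note that, since $\g=0$, the contrast satisfies $\r(\w)^2=\ve(\w)/\ve_0\in\mathbb{R}$, so on $[\w^*-\d_0,\w^*)$ (with $\d_0<\w^*$, where $\ve(\w)>\ve_0>0$) the number $\r(\w)$ is real and positive and hence $f(\w)$, defined in \eqref{def:f}, is real-valued; equivalently, $\r_2=0$, so $\mathscr{L}_2=0$ and $\mathscr{L}_1=f(\w)$ in \eqref{L1}--\eqref{L2}. Writing $\k=\k_1+i\k_2$ and separating real and imaginary parts of $\cos(2\k)=f(\w)$ as in the proof of Lemma~\ref{lem:realsymm} gives $\sin(2\k_1)\sinh(2\k_2)=0$ and $\cos(2\k_1)\cosh(2\k_2)=f(\w)$, which forces, regardless of which solution $\k$ is taken,
\[
    |\Im(\k(\w))|\;=\;g(\w)\;:=\;\tfrac12\,\mathrm{arccosh}\!\big(\max(|f(\w)|,1)\big)
\]
(the same identity also drops out of Proposition~\ref{real and im} once $\mathscr{L}_2=0$). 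Since $f$ is continuous and $\mathrm{arccosh}$ is continuous on $[1,\infty)$, $g$ is continuous on $[\w^*-\d_0,\w^*)$.

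Next, given $K>0$, I would invoke the mechanism in the proof of Theorem~\ref{thm:inf b-g}: there are points $\w^{(+)}_i$ and $\w^{(-)}_i$, both tending to $\w^*$ from below, with $\w^{(+)}_i<\w^{(-)}_i$, at which $\sin(\r\s_0)$ equals $+1$ and $-1$ respectively, and along which (taking $\sin(\s_0)>0$ near $\w^*$, as there) $f(\w^{(+)}_i)=-\tfrac{1+\r^2}{2\r}\sin(\s_0)\to-\infty$ and $f(\w^{(-)}_i)=\tfrac{1+\r^2}{2\r}\sin(\s_0)\to+\infty$. I would pick $i$ so large that $f(\w^{(-)}_i)\ge\cosh(2K)$ and $f(\w^{(+)}_i)\le-\cosh(2K)$, and set $\d:=\w^*-\w^{(+)}_i>0$, so that $[\w^{(+)}_i,\w^{(-)}_i]\subset[\w^*-\d,\w^*)$. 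By the intermediate value theorem $f$ vanishes at some $\w^\circ\in(\w^{(+)}_i,\w^{(-)}_i)$, where $g(\w^\circ)=0$, while $g(\w^{(-)}_i)=\tfrac12\mathrm{arccosh}\big(f(\w^{(-)}_i)\big)\ge K$. Applying the intermediate value theorem to the continuous function $g$ on $[\w^\circ,\w^{(-)}_i]$, for every $q\in[0,K]$ there is $\w_q$ in this interval with $g(\w_q)=q$; for $p\in[-K,K]$ I set $\w_p:=\w_{|p|}\in[\w^*-\d,\w^*)$. At $\w_p$ there is then a solution $\k$ of \eqref{1D dispersion relation} with $\Im(\k)=\pm|p|$, and by Lemma~\ref{Opposite quasiperiodicities} $-\k$ solves it too, so one of them has $\Im(\k)=p$, as required.

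The case $\w\downarrow\w^*$ goes through identically once one checks that $f$ is again real-valued there: for $\w$ slightly above $\w^*$ we have $\ve(\w)\to-\infty$, so $\r$ is purely imaginary, $\r=it$, and substituting into \eqref{def:f} the imaginary parts cancel, leaving $f(\w)=\cos(\s_0)\cosh(t\s_0)+\tfrac{t^2-1}{2t}\sin(\s_0)\sinh(t\s_0)\to+\infty$. I expect the main obstacle to be precisely the bookkeeping that makes $|\Im(\k(\w))|$ a genuine continuous function of $\w$: $\k$ itself is multivalued and its imaginary part can change sign, which is why the argument is routed through the single-valued quantity $g(\w)$ and Lemma~\ref{Opposite quasiperiodicities} rather than through $\k$ directly. (As in Theorem~\ref{thm:inf b-g}, this tacitly assumes $\sin(\s_0(\w^*))\neq0$; in the degenerate case $\tfrac{1+\r^2}{2\r}\sin(\s_0)$ may fail to blow up and a finer expansion near $\w^*$ would be needed.)
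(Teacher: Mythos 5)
Your proposal is correct and follows essentially the same route as the paper: both arguments feed the blow-up and oscillation of $f$ near $\w^*$ established in Theorem~\ref{thm:inf b-g} into the dispersion relation and read off $|\Im(\k)|$, and your function $g(\w)=\tfrac12\mathrm{arccosh}\big(\max(|f(\w)|,1)\big)$ is exactly the paper's $\tfrac12\big|\ln|f(\w)\pm\sqrt{f(\w)^2-1}|\big|$ in disguise. Your version is somewhat tighter: you verify explicitly that $f$ is real just below the pole, work with a single-valued continuous quantity, and apply the intermediate value theorem to $g$ directly, whereas the paper argues more informally that ``$f$ oscillates between $+\infty$ and $-\infty$, hence so does $\ln|\cdot|$.'' One caveat: your parenthetical claim that the case $\w\downarrow\w^*$ ``goes through identically'' is too quick --- there $\ve(\w)\to-\infty$, $\r$ is purely imaginary, $\sin(\r\s_0)$ no longer oscillates, and (as your own formula shows) $f$ tends to $\pm\infty$ without sign changes, so the two interlaced families $\w_i^{(\pm)}$ used to produce both large values and zeros of $g$ inside a small right-neighbourhood do not exist on that side; the paper's proof dismisses this case with the same one-line remark, so this gap is inherited from, rather than introduced by, your argument.
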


\begin{proof}
    From Theorem \ref{thm:inf b-g}, we have that for all $K>0$, there exists $\d>0$, such that, for all $\w\in[\w^*-\d,\w)$,
    \begin{align*}
        \frac{1+\r(\w)^2}{2\r(\w)} > K.
    \end{align*}
    We have that $\cos(\s_0(\w))\cos(\r(\w)\s_0(\w))$ remains bounded close to $\w^*$ and because of the continuity of $\sin(\cdot)$, we can take $\sin(\s_0(\w))>0$ in $[\w^*-\d,\w)$. Then, it follows that, for all $K>0$, in $[\w^*-\d,\w)$,
    \begin{align*}
        \frac{1+\r(\w)^2}{2\r(\w)}\sin\Big(\s_0(\w)\Big) > K.
    \end{align*}
    But, from Theorem \eqref{thm:inf b-g}, we have the existence of infinitely many points $\w^{(+)}_0 < \w^{(-)}_0 < \w^{(+)}_1 < \w^{(-)}_1 < \dots$ in $[\w^*-\d,\w)$, for which, $\sin\Big(\r(\w)\s_0(\w)\Big)$ oscillates between 1 and -1 in each of the intervals of the form $[\w^{(+)}_0 , \w^{(-)}_0]$, $[\w^{(-)}_0 , \w^{(+)}_1]$, $\dots$, denoted by $\mathcal{I}_i$, $i=1,2,\dots.$ This implies that, for all $K>0$, for all $p\in[-K,K]$, there exists $\w_p\in\mathcal{I}_i$, for $i=1,2,\dots,$ such that $f(\w_p) = p$. Since this holds for all $K>0$, it translates to $f$ oscillating and taking all values between $+\infty$ and $-\infty$ in $[\w^*-\d,\w^*)$ as we get closer to $\w^*$.   \\
    Now, from \eqref{1D dispersion relation} and \eqref{def:f}, we get that
    \begin{align}\label{k as ln}
        \k = \frac{-i}{2}\ln\Big(f(\w)\pm\sqrt{f(\w)^2-1}\Big),
    \end{align}
    where $\ln(\cdot)$ denotes the complex logarithm. We see that
    \begin{align*}
        \Im(\k) = \frac{1}{2}\Re\Big(\ln\Big(f(\w)\pm\sqrt{f(\w)^2-1}\Big)\Big).
    \end{align*}
    Although, since we are using the complex logarithm, we have that
    \begin{align*}
        \Re\Big(\ln\Big(f(\w)\pm\sqrt{f(\w)^2-1}\Big)\Big) = \ln\Big|f(\w)\pm\sqrt{f(\w)^2-1}\Big|.
    \end{align*}
    Hence, since we have shown that in $[\w^*-\d,\w^*)$, the function $f(\w)$ oscillates between $+\infty$ and $-\infty$, we have that $\Big|f(\w)\pm\sqrt{f(\w)^2-1}\Big|$ has the same behaviour in $[\w^*-\d,\w^*)$, but the oscillation takes place between $0$ and $+\infty$. Finally, since $\ln(\cdot)$ is an increasing function, we obtain the desired result. Let us note that the proof is the same when we consider $\w\downarrow\w^*$, with the slight change that we consider neighborhoods of the form $(\w^*,\w^*+\d]$.
    
    \end{proof}


\subsubsection{Complex permittivity} \label{sec:complex}

We will now study the effect that introducing damping though allowing the permitivitty to be complex has on our one-dimensional system. Starting from the straightforward real-valued, non-dispersive model considered in Section~\ref{sec:realconst}, we subsequently add damping. For this, we take $\a\in\mathbb{C}$ and $\b=\g=0$, i.e.
\begin{align}\label{e_complex}
    \ve(\w) = \ve_0 + \a \in \mathbb{C}.
\end{align}
The dispersion curves for this setting are shown in Figure~\ref{fig:plot:e_complex}. They are plotted for $\alpha$ with successively larger imaginary parts, to show the effect of gradually increasing the damping. We see that the clear structure of successive bands and gaps is gradually blurred out, eventually to the point that the spectrum bears no clear relation to the original undamped spectrum.

In many ways, the spectrum we obtain in this setting appears to be similar to the actual halide perovskite particles, as plotted in Figure~\ref{fig:HPdispersion}. Indeed, all of the results proved in subsection \ref{Properties of the dispersion relation} hold, with the exception of the imaginary part decay. In fact, the converse is true, as made precise by the following result.

\begin{figure} 
    \centering
    \includegraphics[width=0.55\linewidth]{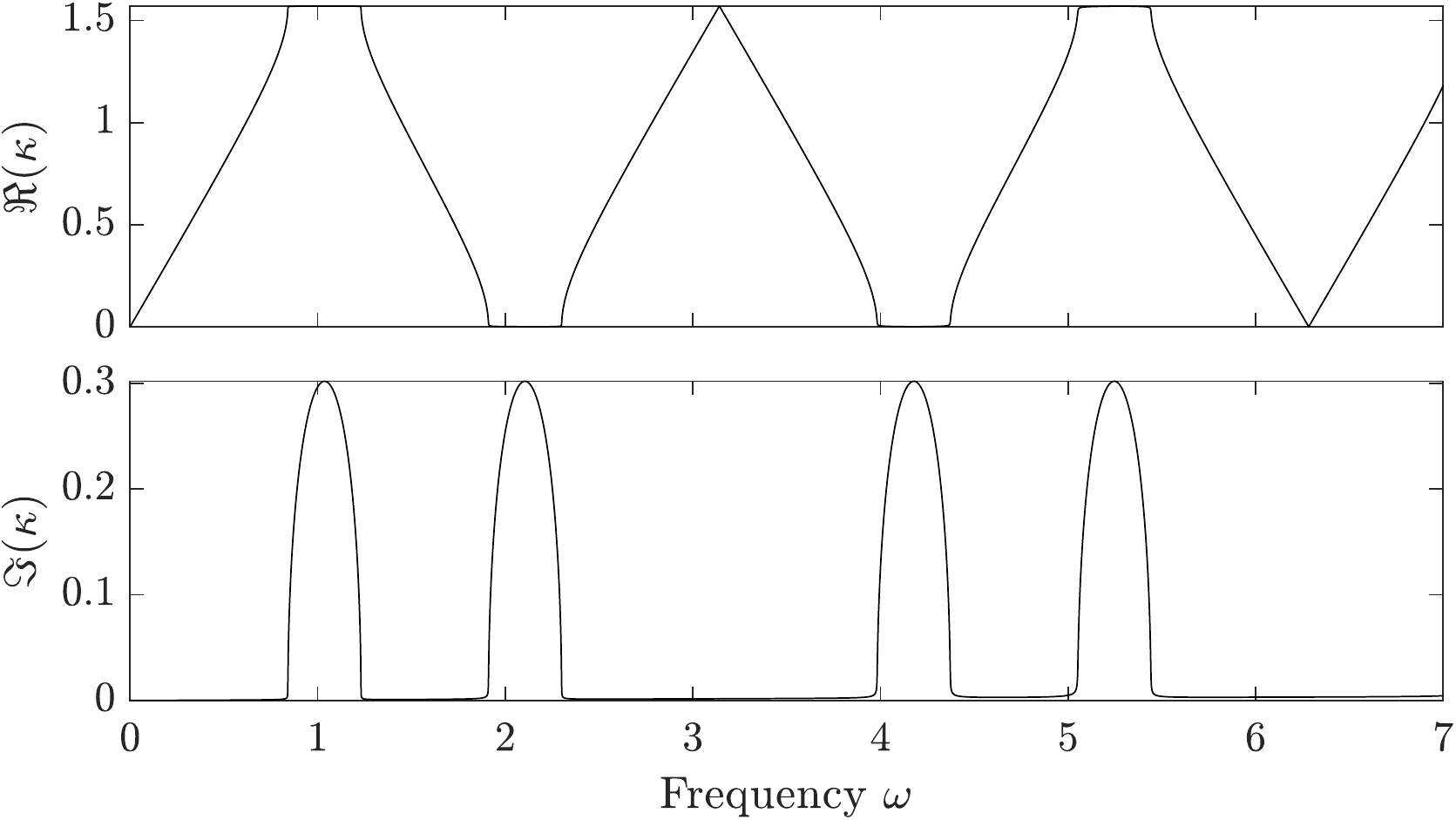}
    \begin{tikzpicture}
    \node[anchor=west] at (-1.75,1.5) {(a) $\alpha = 1+0.001i$};
    \node[white] at (-2,-3) {.};
    \node[white] at (1.3,-3) {.};
    \end{tikzpicture}

    \vspace{0.4cm}
    
    \includegraphics[width=0.55\linewidth]{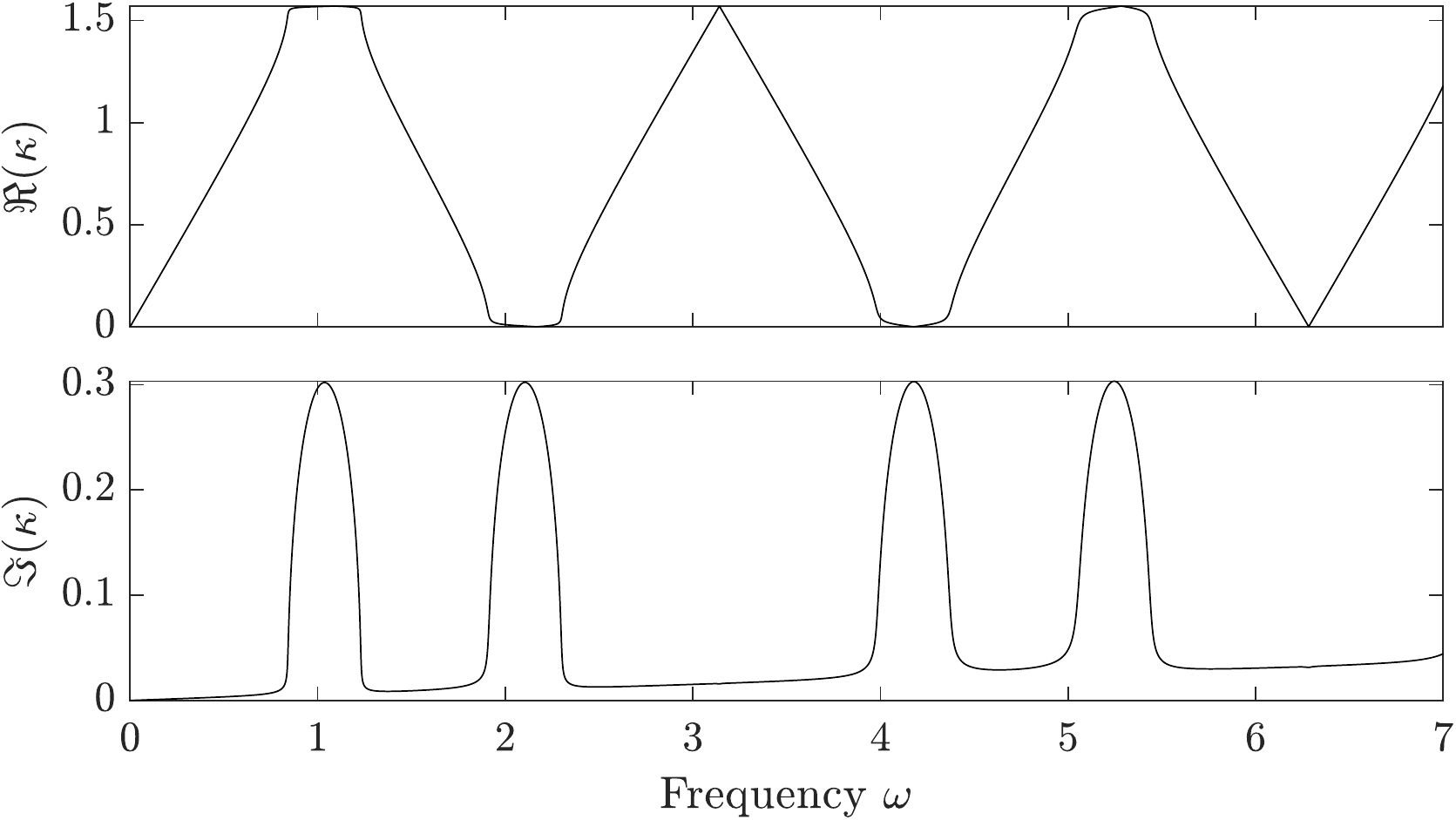}
    \begin{tikzpicture}
    \node[anchor=west] at (-1.75,1.5) {(b) $\alpha = 1+0.01i$};
    \node[white] at (-2,-3) {.};
    \node[white] at (1.3,-3) {.};
    \end{tikzpicture}

    \vspace{0.4cm}
    
    \includegraphics[width=0.55\linewidth]{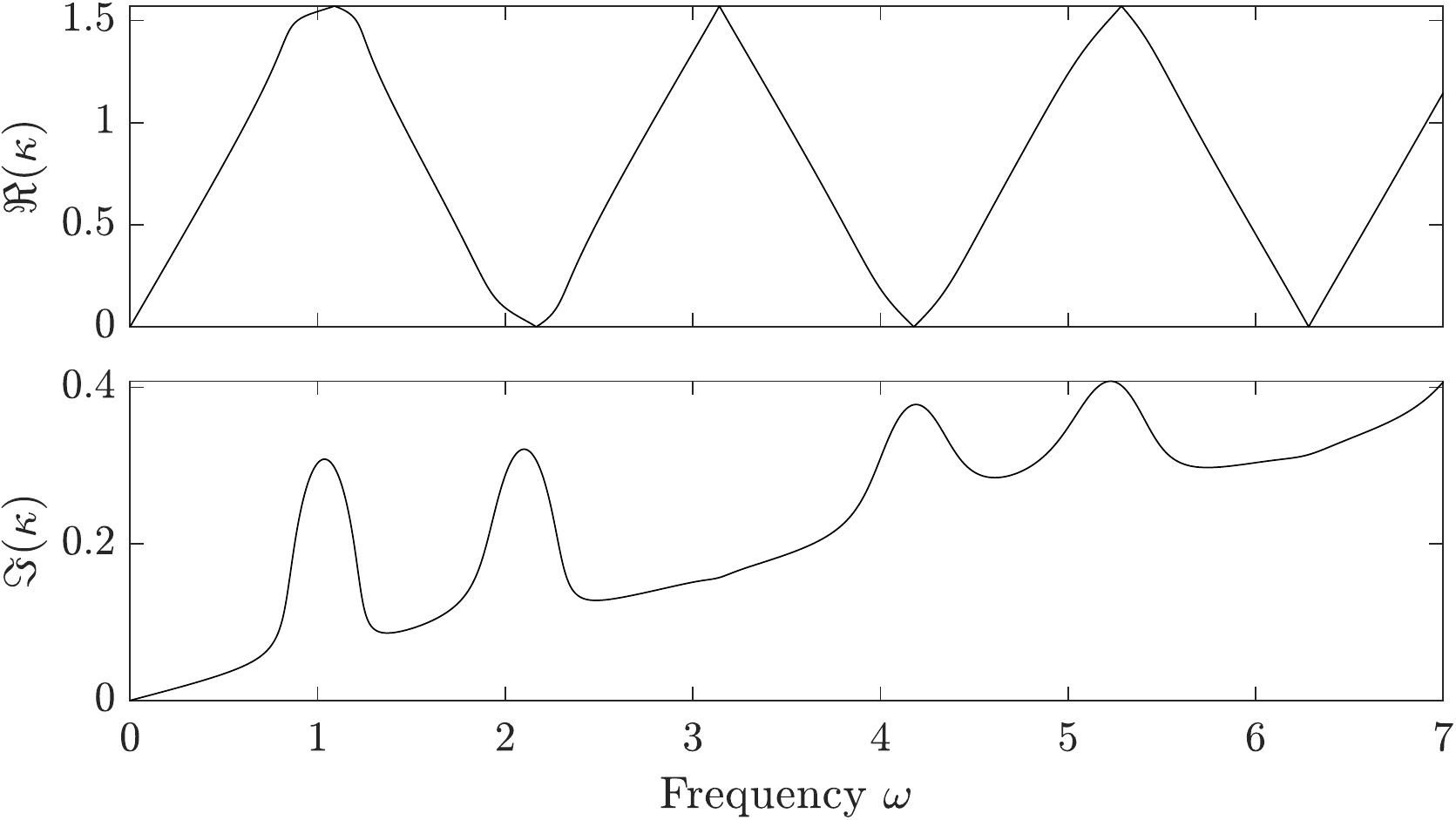}
    \begin{tikzpicture}
    \node[anchor=west] at (-1.75,1.5) {(c) $\alpha = 1+0.1i$};
    \node[white] at (-2,-3) {.};
    \node[white] at (1.3,-3) {.};
    \end{tikzpicture}

    \vspace{0.4cm}

    \includegraphics[width=0.55\linewidth]{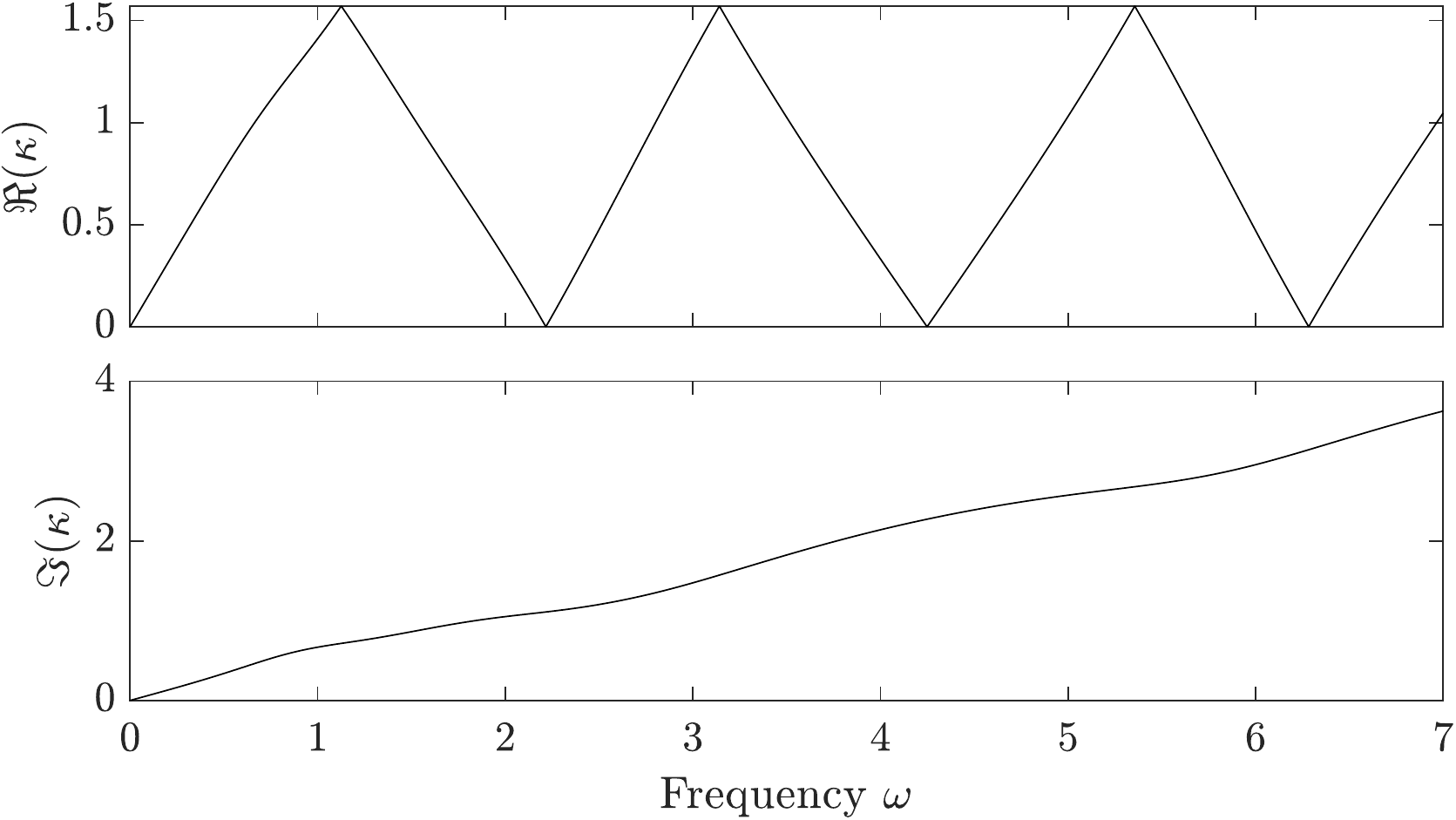}
    \begin{tikzpicture}
    \node[anchor=west] at (-1.75,1.5) {(d) $\alpha = 1+i$};
    \node[white] at (-2,-3) {.};
    \node[white] at (1.3,-3) {.};
    \end{tikzpicture}
    \caption{The dispersion relation of a photonic crystal with frequency-independent complex-valued material parameters. We model a material with permittivity given by \eqref{prmtvt} with $\alpha\in\mathbb{C}$, $\beta=0$ and $\gamma=0$. The frequency $\omega$ is chosen to be real and the Bloch parameter $\kappa$ allowed to take complex values. The permittivity is never singular in this case. }
    \label{fig:plot:e_complex}
\end{figure}

\begin{lemma}
    Let $\k\in\mathbb{C}$ and $\omega\in\mathbb{R}$ be a quasiperiodicity and a frequency, respectively, satisfying the dispersion relation \eqref{1D dispersion relation} with complex-valued, non-dispersive permittivity given by \eqref{e_complex}. Then, 
    \begin{align}
        \lim_{\w\to+\infty} |\Im(\k)| = +\infty.
    \end{align}
\end{lemma}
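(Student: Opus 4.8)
The plan is to show that the right-hand side of \eqref{1D dispersion relation},
\[
  f(\w)=\cos(\s_0)\cos(\r\s_0)-\frac{1+\r^2}{2\r}\sin(\s_0)\sin(\r\s_0),
\]
grows without bound as $\w\to+\infty$, and then to transfer this growth to $|\Im(\k)|$ via the elementary estimate $|\cos(2\k)|\le\cosh\!\big(2|\Im(\k)|\big)$.

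Since $\ve(\w)=\ve_0+\a$ is independent of $\w$, so is $\r=\sqrt{\ve(\w)/\ve_0}$; write $\r=\r_1+i\r_2$. Because the permittivity is genuinely complex, i.e.\ $\Im(\a)\ne0$, we have $\ve(\w)/\ve_0\notin\mathbb{R}$, and hence $\r_1\ne0$ and $\r_2\ne0$. First I would put $f$ into the closed form
\[
  f(\w)=\frac{(1+\r)^2}{4\r}\cos\!\big((1+\r)\s_0\big)-\frac{(\r-1)^2}{4\r}\cos\!\big((\r-1)\s_0\big),
\]
which follows from the product-to-sum identities together with $1+\tfrac{1+\r^2}{2\r}=\tfrac{(1+\r)^2}{2\r}$ and $1-\tfrac{1+\r^2}{2\r}=-\tfrac{(\r-1)^2}{2\r}$. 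Then, using the exact identity $\cos(a+ib)=\tfrac12 e^{b}e^{-ia}+\tfrac12 e^{-b}e^{ia}$ for $a,b\in\mathbb{R}$, with $b=\r_2\s_0$, each cosine splits into a term of size $e^{|\r_2|\s_0}$ and a term of size $e^{-|\r_2|\s_0}$; collecting them gives
\[
  f(\w)=\frac{e^{|\r_2|\s_0}}{8\r}\,\mathcal{B}(\s_0)+\frac{e^{-|\r_2|\s_0}}{8\r}\,\widetilde{\mathcal{B}}(\s_0),
\]
where $\widetilde{\mathcal B}$ is bounded uniformly in $\s_0$ and $\mathcal B(\s_0)$ equals, up to a unimodular factor, $(1+\r)^2 e^{-i\s_0}-(\r-1)^2 e^{i\s_0}$ (with the signs of the exponents dictated by $\mathrm{sgn}(\r_2)$).

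The crux of the argument — and the step I expect to be the main obstacle — is a uniform lower bound $|\mathcal B(\s_0)|\ge c_0>0$ for all $\s_0$, which is exactly what rules out a cancellation between the two exponentially large contributions. It holds because $|z-w|\ge\big||z|-|w|\big|$ and $|(1+\r)^2|-|(\r-1)^2|=|1+\r|^2-|\r-1|^2=4\r_1$, whence $|\mathcal B(\s_0)|\ge4|\r_1|$; this is precisely where $\r_1=\Re(\r)\ne0$ is used. Therefore
\[
  |f(\w)|\ \ge\ \frac{|\r_1|}{2|\r|}\,e^{|\r_2|\s_0}-\frac{|1+\r|^2+|\r-1|^2}{8|\r|}\,e^{-|\r_2|\s_0},
\]
and the right-hand side tends to $+\infty$ as $\w\to\infty$ since $\s_0=\w\sqrt{\ve_0\m_0}\to\infty$ and $\r_2\ne0$. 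Finally, writing $\k=\k_1+i\k_2$ gives $|\cos(2\k)|^2=\cos^2(2\k_1)\cosh^2(2\k_2)+\sin^2(2\k_1)\sinh^2(2\k_2)\le\cosh^2(2\k_2)$, so by \eqref{1D dispersion relation} we get $\cosh\!\big(2|\Im(\k)|\big)\ge|f(\w)|\to\infty$, and hence $|\Im(\k)|\to\infty$. The remaining work is only the bookkeeping in the two closed-form rewritings of $f$.
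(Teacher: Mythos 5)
Your proof is correct, and it takes a genuinely different route from the paper's. The paper reaches the conclusion through the explicit formula \eqref{a_2} for $\Im(\k)$: it writes $\mathscr{L}_1 = C_1\cosh(\s_0\r_2) - C_2\sinh(\s_0\r_2)$ (and similarly for $\mathscr{L}_2$), asserts that the bounded oscillatory coefficients satisfy $C_1 - C_2 \ne 0$ and $\tilde C_1 - \tilde C_2 \ne 0$, and concludes that $|\mathscr{L}_1|$ and $|\mathscr{L}_2|$ each tend to infinity. You instead bound $|f(\w)| = |\cos(2\k)| = \sqrt{\mathscr{L}_1^2 + \mathscr{L}_2^2}$ directly, via the product-to-sum rewriting
\begin{align*}
  f(\w)=\frac{(1+\r)^2}{4\r}\cos\big((1+\r)\s_0\big)-\frac{(\r-1)^2}{4\r}\cos\big((\r-1)\s_0\big),
\end{align*}
and the key uniform lower bound $\big||1+\r|^2-|\r-1|^2\big|=4|\r_1|>0$ on the coefficient of the growing exponential, finishing with $\cosh(2|\Im(\k)|)\ge|\cos(2\k)|$. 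This buys you something real: the paper's coefficients $C_1$, $C_2$, $\tilde C_1$, $\tilde C_2$ depend on $\w$ through $\cos(\s_0)$, $\sin(\s_0)$, $\cos(\s_0\r_1)$, $\sin(\s_0\r_1)$, so $C_1-C_2$ oscillates and does vanish at isolated frequencies; the individual claims $|\mathscr{L}_1|\to\infty$ and $|\mathscr{L}_2|\to\infty$ are therefore not literally true pointwise, even though the combination $\mathscr{L}_1^2+\mathscr{L}_2^2\to\infty$ (which is all that \eqref{a_2} requires) is. Your reverse-triangle-inequality bound establishes exactly that combined statement uniformly in $\s_0$, so your argument is the more airtight of the two. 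Two small points worth making explicit if you write this up: the hypothesis must be $\Im(\a)\ne0$ (you state this, and it is what guarantees $\r_1\ne0$ and $\r_2\ne0$, since $\r^2=\r_1^2-\r_2^2+2i\r_1\r_2\notin\mathbb{R}$); and the case distinction on $\mathrm{sgn}(\r_2)$ changes only which of the two brackets carries the factor $e^{|\r_2|\s_0}$, with the same lower bound $4|\r_1|$ in either case.
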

\begin{proof}
    Let us recall that $\k_2$, denoting the imaginary part of $\k\in\mathbb{C}$, is given by \eqref{a_2}, where $\mathscr{L}_1$ and $\mathscr{L}_2$ are given by \eqref{L1} and \eqref{L2}, respectively. We have that
    \begin{align}\label{limit r s_0}
        \lim_{\w\to\infty}\r(\w)\s_0(\w) = +\infty.
    \end{align}
    since $\s_0$ is linear with respect to $\w$ and $\r$ does not depend on $\w$ in this setting. This implies
    \begin{align*}
    \lim_{\w\to\infty}\sinh\Big(\r(\w)\s_0(\w)\Big) = +\infty \ \ \text{ and } \ \ \lim_{\w\to\infty}\cosh\Big(\r(\w)\s_0(\w)\Big) = +\infty.
    \end{align*}
    We note that it is enough to show that
    \begin{align*}
        \lim_{\w\to\infty} |\mathscr{L}_1| = \lim_{\w\to\infty} |\mathscr{L}_2| = +\infty,
    \end{align*}
    since applying this on \eqref{a_2} gives that $|\k_2|\to+\infty$ as $\w\to+\infty$. Indeed, \eqref{L1} gives that 
    \begin{align*}
        \mathscr{L}_1 = C_1 \cosh(\s_0\r_2) - C_2 \sinh(\s_0\r_2),
    \end{align*}
    where
    \begin{align*}
        C_1 := \cos(\s_0) \cos(\s_0\r_1) - \frac{\r_1(1+\r_1^2+\r_2^2)}{2(\r_1^2+\r_2^2)} \sin(\s_0) \sin(\s_0\r_1)
    \end{align*}
    and
    \begin{align*}
        C_2 := \frac{\r_2(\r_1^2-1+\r_2^2)}{2(\r_1^2+\r_2^2)} \sin(\s_0) \cos(\s_0\r_1).
    \end{align*}
    Using the exponential formulation of the hyperbolic trigonometric functions, we get 
    \begin{align*}
        \mathscr{L}_1 = \frac{C_1-C_2}{2}e^{\s_0 \r_2} + \frac{C_1+C_2}{2}e^{-\s_0\r_2}.
    \end{align*}
    Now, we observe that, as $\w\to+\infty$, $C_1$ and $C_2$ are both bounded and $C_1-C_2\ne0$. Then, from \eqref{limit r s_0}, we get that
    \begin{align*}
        \lim_{\w\to\infty} |\mathscr{L}_1| = + \infty.
    \end{align*}
    Similarly, \eqref{L2} gives that 
    \begin{align*}
        \mathscr{L}_2 = \tilde{C}_1 \sinh(\s_0\r_2) - \tilde{C}_2 \cosh(\s_0\r_2),
    \end{align*}
    where
    \begin{align*}
        \tilde{C}_1 := \cos(\s_0) \sin(\s_0\r_1) + \frac{\r_1(1+\r_1^2+\r_2^2)}{2(\r_1^2+\r_2^2)} \sin(\s_0) \cos(\s_0 \r_1)
    \end{align*}
    and
    \begin{align*}
        \tilde{C}_2 := \frac{\r_2(\r_2^2-1+\r_1^2)}{2(\r_1^2+\r_2^2)} \sin(\s_0) \sin(\s_0 \r_1)
    \end{align*}
    Then, we can write
    \begin{align*}
        \mathscr{L}_2 = \frac{\tilde{C}_1 + \tilde{C}_2}{2} e^{\s_0\r_2} + \frac{\tilde{C}_2 - \tilde{C}_1}{2} e^{-\s_0\r_2}
    \end{align*}
    As before, we observe that, as $\w\to+\infty$, $\tilde{C}_1$ and $\tilde{C}_2$ are both bounded and $\tilde{C}_1-\tilde{C}_2\ne0$. Then, from \eqref{limit r s_0}, we get that
    \begin{align*}
        \lim_{\w\to\infty} |\mathscr{L}_2| = + \infty.
    \end{align*}
    This concludes the proof
\end{proof}

\subsubsection{Discussion}

The analysis in this section can be used to understand the dispersion diagram for the halide perovskite photonic crystal that was presented in Figure~\ref{fig:HPdispersion}. There are two crucial observations. First, we saw in Section~\ref{sec:sing} that the introduction of singularities in the permittivity led to the creation of countably infinitely many band gaps in a neigbourhood of the pole, when the pole falls on the real axis. However, this exotic behaviour is not seen in Figure~\ref{fig:HPdispersion}, due in part to the introduction of damping causing the poles to fall below the real axis. This effect can also be exaplined in terms of the results in Section~\ref{sec:complex}, where we saw that the introduction of damping to a simple non-dispersive model smoothed out the band gaps. The behaviour shown in Figure~\ref{fig:HPdispersion} is a combination of these phenomena.




\section{Multiple dimensions} \label{sec:multid}

Let us now treat the periodic structures in two- and three-dimensions. In this case, to be able to handle the problem concisely using asymptotic methods, we are interested in the case of small resonators. We will assume that there exists some fixed domain $D$, which the the union of the $N$ disjoint subsets $D = D_1 \cup D_2 \cup \dots\cup D_N$, such that $\W$ is given by
\begin{equation}\label{Omega}
    \W = \d D + z,
\end{equation}
for some position $z\in\mathbb{R}^d$, $d=2,3$, and characteristic size $0<\d\ll1$. Then, making a change of variables, the quasiperiodic Helmholtz problem (\ref{Quasiperiodic Helmholtz problem}) becomes
\begin{align}\label{Quasiperiodic Helmholtz problem 2 }
    \begin{cases}
    \D u^{\k} + \d^2 \w^2 \ve(\w)\m_0 u^{\k} = 0 \ \ \ &\text{ in } \mathcal{D}, \\
    \D u^{\k} + \d^2 k_0^2 u^{\k} = 0 &\text{ in } \mathbb{R}^d\setminus\overline{D}, \\
    u^{\k}|_+ - u^{\k}|_-=0 &\text{ on } \partial\mathcal{D}, \\
    \frac{\partial u^{\k}}{\partial \n}|_+ - \frac{\partial u^{\k}}{\partial \n}|_- = 0 &\text{ on } \partial\mathcal{D}, \\
    u^{\k}(x_d,x_0) &\text{ is $\k$-quasiperiodic in $x_d$}, \\
    u^{\k}(x_d,x_0) &\text{ satisfies the $\k$-quasiperiodic radiation condition as } |x_0|\to\infty.\\
    \end{cases}
\end{align}
We will also make an additional assumption on the dimensions of the nano-particles. This will allow us to prove an approximation for the values of the modes $u|_{D_i}$, $i=1,\dots,N,$ on each particle. The assumption is one of \emph{diluteness}, in the sense that the particles are small relative to the separation distances between them. To capture this, we introduce the parameter $\rho_i$ to capture the size of the reference particles $D_1,\dots,D_N$. We define $\rho_i:=\tfrac{1}{2}(\text{diam}(D_i))$ where $\text{diam}(D_i)$ is given by
\begin{equation}
    \text{diam}(D_i)=\sup\{  |x-y|:x,y\in D_i  \}.
\end{equation} 
We will assume that each $\rho_i\to0$ independently of $\delta$. This regime means that the system is dilute in the sense that the particles are small relative to the distances between them.

We will first present certain general results for this system. Then we will give a more qualitative description of the two-dimensional setting.

\subsection{Integral formulation}

Let $G^k(x)$ be the outgoing Helmholtz Green's function in $\mathbb{R}^d$, defined as the unique solution to
\begin{align*}
    (\D + k^2) G^k(x) = \d_0(x) \ \ \text{ in } \mathbb{R}^d,
\end{align*}
along with the outgoing radiation condition, where $\delta_0$ is the Dirac delta. It is well known that $G^k$ is given by
\begin{align}\label{G}
    G^k(x)=
    \begin{cases}
    -\frac{i}{4}H^{(1)}_0(k|x|), \ \ \ &d=2,\\
    -\frac{e^{ik|x|}}{4\pi|x|}, &d=3,\\
    \end{cases}
\end{align}
where $H_0^{(1)}$ is the Hankel function of first kind and order zero. We define the quasiperiodic Green's function $G^{\k,k}(x)$ as the Floquet transform of $G^k(x)$ in the first $d_l$ coordinate dimensions, i.e.
\begin{align}\label{quasi G}
    G^{\k,k}(x)=
    \begin{cases}
    -\frac{i}{4}\sum\limits_{m\in\L}H^{(1)}_0(k|x-m|)e^{im\cdot\k}, \ \ \ &d=2,\\
    - \frac{1}{4\pi} \sum\limits_{m\in\L} \frac{e^{ik|x-m|}}{|x-m|}e^{i\k\cdot m}, &d=3.\\
    \end{cases}
\end{align}
Then, from \cite{AD,alexopoulos2022mathematical}, we know that \eqref{Quasiperiodic Helmholtz problem} has the following integral representation expression.

\begin{theorem}[Lippmann-Schwinger integral representation formula] \label{Integral representation}
A function $u^{\k}$ satisfies the differential system \eqref{Quasiperiodic Helmholtz problem} if and only if it satisfies the following equation
\begin{align}\label{Lipmann-Schwinger}
    u^{\k}(x)-u^{\k}_{in}(x) = -\d^2 \w^2 \xi(\w) \int_{\mathcal{D}} G^{\k,\d k_0}(x-y) u(y) \upd y, \ \ x\in\mathbb{R}^d,
\end{align}
where the function $\xi:\mathbb{C}\to\mathbb{C}$ describes the permittivity contrast between $\mathcal{D}$ and the background and is given by
$$
\xi(\w) = \m_0(\ve(\w) - \ve_0).
$$
\end{theorem}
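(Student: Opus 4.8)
The plan is to recast the transmission problem \eqref{Quasiperiodic Helmholtz problem 2 } as a single Helmholtz equation on all of $\mathbb{R}^d$ with a contrast source supported on $\mathcal{D}$, and then to use that $G^{\k,\d k_0}$ is the fundamental solution of the $\k$-quasiperiodic radiating problem to invert that equation; the ``only if'' and ``if'' directions then amount to running this argument in the two directions.

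\textbf{From the PDE to the integral equation.} Assume $u^\k$ solves \eqref{Quasiperiodic Helmholtz problem 2 }. Using that the exterior wavenumber satisfies $k_0^2=\w^2\ve_0\m_0$, subtracting $\d^2 k_0^2 u^\k$ from the interior equation gives, inside $\mathcal{D}$, $(\D+\d^2 k_0^2)u^\k=-\d^2\w^2\m_0(\ve(\w)-\ve_0)u^\k=-\d^2\w^2\xi(\w)u^\k$, while outside $\overline{\mathcal{D}}$ we have $(\D+\d^2 k_0^2)u^\k=0$. The transmission conditions $u^\k|_+=u^\k|_-$ and $\partial_\nu u^\k|_+=\partial_\nu u^\k|_-$ on $\partial\mathcal{D}$ ensure that, when these are glued together, the distributional Laplacian of $u^\k$ on $\mathbb{R}^d$ carries no single-layer contribution on the interface, so that
\begin{equation*}
    (\D+\d^2 k_0^2)\,u^\k=-\d^2\w^2\,\xi(\w)\,u^\k\,\mathbbm{1}_{\mathcal{D}}\qquad\text{in }\mathbb{R}^d.
\end{equation*}
The difference $w:=u^\k-u^\k_{in}$ then satisfies the same equation (since $(\D+\d^2 k_0^2)u^\k_{in}=0$), is $\k$-quasiperiodic, and obeys the $\k$-quasiperiodic outgoing radiation condition. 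By uniqueness of the radiating $\k$-quasiperiodic solution, $w$ must coincide with the convolution of the right-hand side against $G^{\k,\d k_0}$; a \emph{folding} computation — using the quasiperiodicity of $G^{\k,\d k_0}$ to collapse the sum over the lattice $\L$ — rewrites this convolution as $-\d^2\w^2\xi(\w)\int_{\mathcal{D}}G^{\k,\d k_0}(x-y)u(y)\,\upd y$, which is \eqref{Lipmann-Schwinger}.

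\textbf{From the integral equation to the PDE.} Conversely, assume \eqref{Lipmann-Schwinger} holds. Its right-hand side is $u^\k_{in}$ plus a Helmholtz volume potential with density in $L^2(\mathcal{D})$; such potentials lie in $H^2_{\mathrm{loc}}(\mathbb{R}^d)$ by interior elliptic regularity, so $u^\k\in H^2_{\mathrm{loc}}$ and hence the traces $u^\k|_\pm$ and $\partial_\nu u^\k|_\pm$ on $\partial\mathcal{D}$ exist and agree — the transmission conditions are automatic. Applying $(\D+\d^2 k_0^2)$ to both sides of \eqref{Lipmann-Schwinger} and using $(\D+\d^2 k_0^2)G^{\k,\d k_0}=\sum_{m\in\L}\d_0(\cdot-m)e^{i\k\cdot m}$ together with $(\D+\d^2 k_0^2)u^\k_{in}=0$ recovers precisely the interior equation on $\mathcal{D}$ and the exterior equation on $\mathbb{R}^d\setminus\overline{\mathcal{D}}$, while $\k$-quasiperiodicity and the quasiperiodic radiation condition are inherited directly from those of $G^{\k,\d k_0}$ and $u^\k_{in}$.

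\textbf{Main obstacle.} The delicate points are analytic rather than algebraic. First, one must justify that the quasiperiodic volume potential is in $H^2_{\mathrm{loc}}$ (equivalently $C^1$ up to $\partial\mathcal{D}$), so that the jump relations are genuinely encoded in \eqref{Lipmann-Schwinger}, and, in the forward direction, that gluing the interior and exterior Helmholtz equations produces no spurious surface term — this is exactly where $[u^\k]=[\partial_\nu u^\k]=0$ enters. Second, the argument presupposes that $G^{\k,\d k_0}$ is well defined and that the radiating $\k$-quasiperiodic problem is uniquely solvable, which requires $\w$ (equivalently $\d k_0$) to avoid the discrete set of frequencies at which the defining lattice sum in \eqref{quasi G} diverges; under this standing assumption the statement follows as in \cite{AD, alexopoulos2022mathematical}.
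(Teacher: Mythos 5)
The paper does not prove this theorem: it simply cites \cite{AD,alexopoulos2022mathematical}, where the non-periodic analogue is established by exactly the argument you give. Your sketch is the standard and correct derivation: rewriting the transmission problem as $(\D+\d^2k_0^2)u^{\k}=-\d^2\w^2\xi(\w)u^{\k}\mathbbm{1}_{\mathcal{D}}$ in the distributional sense (the transmission conditions being precisely what kills the surface terms), inverting with the quasiperiodic Green's function via uniqueness of the radiating quasiperiodic solution, and running the converse by applying $(\D+\d^2k_0^2)$ to the volume potential and invoking $H^2_{\mathrm{loc}}$ regularity for the transmission conditions. Two points you flag in passing deserve emphasis, since the paper glosses over both. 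First, the notational mismatch between the theorem (which references \eqref{Quasiperiodic Helmholtz problem} but carries the $\d^2$ scaling of \eqref{Quasiperiodic Helmholtz problem 2 }) and the meaning of $\int_{\mathcal{D}}$: your folding step, which uses the quasiperiodicity of $u$ to trade an integral over the full periodic structure against the free-space kernel for an integral over the unit-cell particles against $G^{\k,\d k_0}$, is genuinely needed to make the stated formula consistent with the operators $K^{\k,r}_{D_i}$, $R^{\k,r}_{D_iD_j}$ used later, which integrate only over the reference particles. Second, the standing assumption that $\d k_0$ avoids the exceptional set where the lattice sum defining $G^{\k,\d k_0}$ diverges (equivalently, where uniqueness of the quasiperiodic radiating problem fails) is necessary for the "only if" direction and is nowhere stated in the paper; for $d_l=d$ there is no radiation condition at all and uniqueness must instead be phrased as $\d^2k_0^2\notin\{|\k+q|^2:q\in\L^*\}$. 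With those caveats made explicit, your proof is complete in outline and consistent with the cited references.
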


\subsection{Dispersion relation}

We will now retrieve an expression which relates the subwavelength resonances of the system and the quasiperiodicities. The method used is similar to the one developed in \cite{alexopoulos2022mathematical} for systems of finitely many particles.

\subsubsection{Matrix representaion}

Let us define the following $\k$-quasiperiodic integral operators
\begin{align}\label{K_Di}
    K^{\k,r}_{D_i}: u|_{D_i} \in L^2(D_i) \rightarrow - \int_{D_i} G^{\k,r}(x-y)u(y)\upd y \Big|_{D_i} \in L^2(D_i)
\end{align}
and
\begin{align}\label{R_DiDj}
    R^{\k,r}_{D_i D_j}: u|_{D_i} \in L^2(D_i) \rightarrow - \int_{D_i} G^{\k,r}(x-y)u(y)\upd y \Big|_{D_j} \in L^2(D_j).
\end{align}
Then, as in \cite{alexopoulos2022mathematical}, the scattering problem has the following matrix representation:
\begin{align*}
    \begin{pmatrix}
    1-\d^2\w^2\xi(\w)K^{\k,\d k_0}_{D_1} & -\d^2\w^2\xi(\w) R^{\k,\d k_0}_{D_2 D_1} & \dots & -\d^2\w^2\xi(\w) R^{\k,\d k_0}_{D_N D_1} \\
    -\d^2\w^2\xi(\w) R^{\k,\d k_0}_{D_1 D_2} & 1-\d^2\w^2\xi(\w)K^{\k,\d k_0}_{D_2} & \dots & -\d^2\w^2\xi(\w) R^{\k,\d k_0}_{D_N D_2}\\
    \vdots & \vdots & \ddots & \vdots \\
    -\d^2\w^2\xi(\w) R^{\k,\d k_0}_{D_1 D_N} & -\d^2\w^2\xi(\w) R^{\k,\d k_0}_{D_2 D_N} & \dots & 1-\d^2\w^2\xi(\w)K^{\k,\d k_0}_{D_N}
    \end{pmatrix}
    \begin{pmatrix}
    u^{\k}|_{D_1} \\ u^{\k}|_{D_2} \\ \vdots \\ u^{\k}|_{D_N}
    \end{pmatrix}
    =
    \begin{pmatrix}
    u^{\k}_{in}|_{D_1} \\ u^{\k}_{in}|_{D_2} \\ \vdots \\ u^{\k}_{in}|_{D_N}
    \end{pmatrix}
\end{align*}
Since the scattered field is fully determined by the value within each resonator, we will introduce the notation
\begin{align}
    u_i^{\k} := u^{\k}|_{D_i}, \ \ i=1,\dots,N.
\end{align}
Then, the resonance problem is to find $\w\in\mathbb{C}$, such that there exists $(u^{\k}_1,u^{\k}_2,\dots,u^{\k}_N) \in L^2(D_1) \times L^2(D_2) \times \dots \times L^2(D_N)$, $u^{\k}_i \ne 0$, for $i=1,\dots,N,$ such that
\begin{align}\label{Resonance pb}
    \begin{pmatrix}
    1-\d^2\w^2\xi(\w)K^{\k,\d k_0}_{D_1} & -\d^2\w^2\xi(\w) R^{\k,\d k_0}_{D_2 D_1} & \dots & -\d^2\w^2\xi(\w) R^{\k,\d k_0}_{D_N D_1} \\
    -\d^2\w^2\xi(\w) R^{\k,\d k_0}_{D_1 D_2} & 1-\d^2\w^2\xi(\w)K^{\k,\d k_0}_{D_2} & \dots & -\d^2\w^2\xi(\w) R^{\k,\d k_0}_{D_N D_2}\\
    \vdots & \vdots & \ddots & \vdots \\
    -\d^2\w^2\xi(\w) R^{\k,\d k_0}_{D_1 D_N} & -\d^2\w^2\xi(\w) R^{\k,\d k_0}_{D_2 D_N} & \dots & 1-\d^2\w^2\xi(\w)K^{\k,\d k_0}_{D_N}
    \end{pmatrix}
    \begin{pmatrix}
    u^{\k}_1 \\ u^{\k}_2 \\ \vdots \\ u^{\k}_N
    \end{pmatrix}
    =
    \begin{pmatrix}
    0 \\ 0 \\ \vdots \\ 0
    \end{pmatrix}
\end{align}

\subsubsection{Resonances}

Let us now retrieve the relation between the subwavelength resonant frequencies and the quasiperiodicities, obtained by studying the solutions to \eqref{Resonance pb}. We will first recall a definition and a lemma which will help in the analysis of the problem.

\begin{definition}
Given $N\in\mathbb{N}$, we denote by $\lfloor N \rfloor:\mathbb{N}\to\{1,2,\dots,N\}$ a modified version of the modulo function, \emph{i.e.} the remainder of euclidean division by $N$. In particular, for all $M \in \mathbb{N}$, there exists unique $\t\in\mathbb{Z}^{\geq0}$ and $r\in\mathbb{N}$ with $0<r\leq N$, such that
$$
M = \t \cdot N + r.
$$
Then, we define $M \lfloor N \rfloor$ to be
$$
M \lfloor N \rfloor := r.
$$
\end{definition}

We recall the diluteness assumption that we have made on our system, which is captured by considering small particle size $\r$. We define $\rho:=\tfrac{1}{2}\max_{i} (\text{diam}(D_i))$ where $\text{diam}(D_i)$ is given by
\begin{equation}
    \text{diam}(D_i)=\sup\{  |x-y|:x,y\in D_i  \}.
\end{equation}
The next lemma is a variation of Lemma 2.6 in \cite{alexopoulos2022mathematical}.

\begin{lemma}\label{approx}
For all $i=1,\dots,N$, we denote $u^{\k}_i=u^{\k}|_{D_i}$, where $u^{\k}$ is a resonant mode, in the sense that it is a solution to \eqref{Lipmann-Schwinger} with no incoming wave. Then, for characteristic size $\d$ of the same order as $\rho$, we can write that
\begin{align}\label{approx formula}
    u^{\k}_i = \langle u^{\k},\phi^{(i)}_{\k} \rangle \phi^{(i)}_{\k} + O(\r^2), \ \ i=1,\dots,N,
\end{align}
as $\r\to0$, where $\phi^{(i)}_{\k}$ denotes the eigenvector associated to the particle $D_i$ of the potential $K_{D_i}^{\k,\d k_0}$ and $\r>0$ denotes the particle size parameter of $D_1,\dots,D_N$. Here, $\d$ and $\r$ are of the same order in the sense that $\d=O(\r)$ and $\r=O(\d)$. In this case, the error term holds uniformly for any small $\d$ and $\r$ in a neighbourhood of 0. 
\end{lemma}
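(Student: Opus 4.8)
The plan is to analyze the matrix representation \eqref{Resonance pb} in the dilute regime, treating the off-diagonal blocks $R^{\k,\d k_0}_{D_j D_i}$ as small perturbations relative to the diagonal blocks. First I would recall that a resonant mode $u^{\k}$ with no incoming wave satisfies, componentwise,
\begin{align*}
    u^{\k}_i = \d^2\w^2\xi(\w)\, K^{\k,\d k_0}_{D_i}[u^{\k}_i] + \d^2\w^2\xi(\w)\sum_{j\ne i} R^{\k,\d k_0}_{D_j D_i}[u^{\k}_j], \quad i=1,\dots,N,
\end{align*}
so that to leading order $u^{\k}_i$ is (approximately) an eigenfunction of the single-particle operator $K^{\k,\d k_0}_{D_i}$. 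The next step is to understand the scaling of all operators in $\r$ (with $\d=O(\r)$, $\r=O(\d)$): by the change of variables $x = \r\, \hat x + z_i$ mapping $D_i$ to a fixed reference domain, the kernel $G^{\k,\d k_0}(x-y)$ splits, as in \cite{alexopoulos2022mathematical}, into a singular part that scales like the free-space Green's function (contributing an $O(\r^2)$ operator on $L^2(D_i)$ in $d=3$, with the analogous logarithmic-corrected scaling in $d=2$) plus a smooth quasiperiodic remainder that is $O(\r^d)$ and hence lower order. Thus $K^{\k,\d k_0}_{D_i}$ has a well-defined leading-order term $K_{D_i}$ (the free-space Newtonian-type potential on $D_i$), which is self-adjoint and compact, with a simple principal eigenvalue and normalized eigenvector $\phi^{(i)}_{\k}$; the $\k$- and $\d$-dependence enters only at higher order.

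The core of the argument is then a standard eigenprojection/perturbation estimate. Writing $P_i$ for the rank-one orthogonal projection onto $\phi^{(i)}_{\k}$ and $Q_i = I - P_i$, I would decompose $u^{\k}_i = \langle u^{\k}_i, \phi^{(i)}_{\k}\rangle \phi^{(i)}_{\k} + Q_i u^{\k}_i$ and show $\|Q_i u^{\k}_i\|_{L^2(D_i)} = O(\r^2)\|u^{\k}\|$. This follows by applying $Q_i$ to the fixed-point equation above: on the range of $Q_i$ the operator $I - \d^2\w^2\xi(\w) K^{\k,\d k_0}_{D_i}$ is boundedly invertible (since the relevant eigenvalue is simple and separated from the rest of the spectrum, and $\d^2\w^2\xi(\w)$ multiplied against the subleading eigenvalues stays away from $1$ for $\w$ in the subwavelength regime), while the right-hand side $Q_i$ applied to the remainder terms — the smooth part of $K^{\k,\d k_0}_{D_i}$ and all the coupling terms $\d^2\w^2\xi(\w) R^{\k,\d k_0}_{D_j D_i}$ — is $O(\r^2)$, because each $R^{\k,\d k_0}_{D_j D_i}$ has a smooth kernel (the particles are disjoint and separated by distances bounded below, so $G^{\k,\d k_0}(x-y)$ is smooth for $x\in D_j$, $y\in D_i$) and therefore acts as an operator of norm $O(\r^{2d})$ or better between the small domains, while $\d^2\w^2\xi(\w) = O(\r^2)$. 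Collecting these contributions yields $u^{\k}_i = \langle u^{\k}_i, \phi^{(i)}_{\k}\rangle \phi^{(i)}_{\k} + O(\r^2)$, and since $\langle Q_i u^{\k}_i, \phi^{(i)}_{\k}\rangle = 0$ one may replace $\langle u^{\k}_i,\phi^{(i)}_{\k}\rangle$ by $\langle u^{\k},\phi^{(i)}_{\k}\rangle$ (interpreting $\phi^{(i)}_{\k}$ as extended by zero outside $D_i$) to obtain exactly \eqref{approx formula}. Finally I would note the uniformity claim: all implied constants depend only on the fixed reference domains $D_1,\dots,D_N$, the lattice, the separation distances and the spectral gap, none of which degenerate as $\r,\d\to 0$, so the estimate holds uniformly in a neighbourhood of $0$.

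The main obstacle I anticipate is making the invertibility of $I - \d^2\w^2\xi(\w) K^{\k,\d k_0}_{D_i}$ on the complementary subspace $\operatorname{ran}(Q_i)$ precise and uniform: one must control the full spectrum of the single-particle operator (not just its leading eigenvalue), verify that no other eigenvalue collides with $1/(\d^2\w^2\xi(\w))$ in the relevant frequency window, and track how the $\k$-dependent and $\d$-dependent corrections to $K^{\k,\d k_0}_{D_i}$ perturb the spectral projection $P_i$ itself — since $\phi^{(i)}_{\k}$ as written already carries $\k$-dependence, one should be careful whether the $O(\r^2)$ error absorbs the difference between the true perturbed eigenvector and the reference one. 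A secondary technical point is the $d=2$ case, where the Hankel-function Green's function has a logarithmic singularity, so the naive power counting in $\r$ must be adjusted; I would handle this by citing the corresponding estimates in \cite{alexopoulos2022mathematical} and noting that the logarithmic factors do not affect the stated $O(\r^2)$ order once $\d$ and $\r$ are comparable.
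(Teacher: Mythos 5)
First, a point of comparison: the paper does not actually prove Lemma~\ref{approx} --- it is imported as ``a variation of Lemma~2.6 in \cite{alexopoulos2022mathematical}'' --- so there is no in-text proof to measure you against. Your overall strategy (project each $u^{\kappa}_i$ onto the leading eigenvector $\phi^{(i)}_{\kappa}$, then bound the complementary component $Q_i u^{\kappa}_i$ by inverting $I-\delta^2\omega^2\xi(\omega)K^{\kappa,\delta k_0}_{D_i}$ on $\operatorname{ran}(Q_i)$ and estimating the coupling operators in the dilute regime) is the natural one and is indeed the strategy of the cited lemma.

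However, your quantitative power counting has a genuine gap. Since $u^{\kappa}$ is a \emph{resonant} mode, the frequency satisfies, to leading order, $\delta^2\omega^2\xi(\omega)\,\lambda^{(i)}_{\kappa}\approx 1$; and because the particles shrink with $\rho$, the leading eigenvalue of $K^{\kappa,\delta k_0}_{D_i}$ scales like $|D_i|$ (up to logarithmic factors in $d=2$; compare Lemma~\ref{Psi lemma*}, where $\nu^{(\kappa)}_{-1,i}=-\tfrac{|D_i|}{2\pi}\sum_m e^{im\cdot\kappa}$), i.e.\ $\lambda^{(i)}_{\kappa}=O(\rho^2)$. Hence at resonance $\delta^2\omega^2\xi(\omega)=\Theta(\rho^{-2})$, which is \emph{large}, not $O(\rho^2)$ as you assert; the mode only exists because $\omega$ approaches the pole of $\varepsilon$ so that $\xi(\omega)$ blows up at the right rate. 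Likewise $\|R^{\kappa,\delta k_0}_{D_jD_i}\|_{L^2(D_j)\to L^2(D_i)}$ is $O(\rho^{d})$ (the Hilbert--Schmidt bound is $\sup|G|\cdot\sqrt{|D_i||D_j|}$), not $O(\rho^{2d})$. With the corrected scalings, your estimate for $\delta^2\omega^2\xi(\omega)\,Q_iR^{\kappa,\delta k_0}_{D_jD_i}u^{\kappa}_j$ gives only $O(\rho^{d-2})$ --- that is $O(\rho)$ in three dimensions and $O(1)$ up to logarithms in two --- nowhere near the claimed $O(\rho^2)$. Closing the argument requires an additional cancellation that your sketch does not invoke: since $D_i$ and $D_j$ are separated, $R^{\kappa,\delta k_0}_{D_jD_i}u^{\kappa}_j$ is constant on $D_i$ up to a relative error $O(\rho)$ (Taylor expansion of the kernel about the particle centres), and $\phi^{(i)}_{\kappa}$ is itself constant to leading order (again Lemma~\ref{Psi lemma*} in $d=2$), so that $Q_i$ annihilates the dominant part of the coupling; it is this gain, not the raw operator norms, that drives the improved remainder. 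Two further points need care: the uniform invertibility of $I-\delta^2\omega^2\xi(\omega)K^{\kappa,\delta k_0}_{D_i}$ on $\operatorname{ran}(Q_i)$ must be phrased in terms of the ratio of subleading to leading eigenvalues (the whole spectrum collapses to $0$ as $\rho\to0$, so ``separated from the rest of the spectrum'' is not by itself enough), and the operators here are not self-adjoint (the quasiperiodic Helmholtz kernel is complex), so the orthogonal-projection language should be replaced by the pole-pencil decomposition \eqref{pole pencil} that the paper actually uses. As written, the proposal would not yield the stated $O(\rho^2)$ bound.
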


Let us now state the main result of this section.

\begin{theorem}\label{determinant}
    The resonance problem, as $\d\to0$ and $\r\to0$, with $\d=O(\r)$ and $\r=O(\d)$, \eqref{Resonance pb} in dimensions $d=2,3$, becomes finding $\w\in\mathbb{C}$ such that
    \begin{align*}
        \det\Big(\mathcal{K}^{\k}(\w)\Big) = 0,
    \end{align*}
    where
    \begin{align}\label{K}
        \mathcal{K}^{\k}(\w)_{ij} := 
        \begin{cases}
            \langle R^{\k,\d k_0}_{D_i D_{i+1 \lfloor N \rfloor }} \phi^{(i)}_{\k}, \phi_{\k}^{(i+1\lfloor N \rfloor)} \rangle , &\text{ if $i=j$,} \\
            - \mathscr{A}^{\k}_i(\w,\d) \langle R^{\k,\d k_0}_{D_j D_i} \phi_{\k}^{(j)}, \phi^{(i)}_{\k} \rangle \langle R^{\k,\d k_0}_{D_i D_{i+1 \lfloor N \rfloor }} \phi_{\k}^{(i)}, \phi_{\k}^{(i+1\lfloor N \rfloor)} \rangle, &\text{ if $i \ne j$,}
        \end{cases}
    \end{align}
    Here, $k_0=\omega\sqrt{\mu_0\varepsilon_0}$ and
    \begin{align}\label{A_i}
        \mathscr{A}^{\k}_i(\w,\d) := \frac{\d^2 \w^2 \xi(\w)}{1 - \d^2 \w^2 \xi(\w) \l^{(i)}_{\k}}, \quad i=1,...,N.
    \end{align}
    with $\l_{\k}^{(i)}$ and $\phi^{(i)}_{\k}$ being the eigenvalues and the respective eigenvectors associated to the particle $D_i$ of the potential $K^{\k,\d k_0}_{D_i}$, for $i=1,2,\dots,N$.
\end{theorem}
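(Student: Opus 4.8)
The plan is to reduce the $N\times N$ block-operator system \eqref{Resonance pb} to a scalar $N\times N$ matrix equation by exploiting the diluteness approximation of Lemma~\ref{approx}. First I would substitute the approximation \eqref{approx formula}, $u^{\k}_i = \langle u^{\k},\phi^{(i)}_{\k}\rangle\phi^{(i)}_{\k} + O(\r^2)$, into each row of \eqref{Resonance pb}. Writing $x_i := \langle u^{\k},\phi^{(i)}_{\k}\rangle$ for the scalar unknowns, the $i$-th row becomes, to leading order,
\begin{align*}
    \big(1-\d^2\w^2\xi(\w) K^{\k,\d k_0}_{D_i}\big)\big(x_i\phi^{(i)}_{\k}\big) - \d^2\w^2\xi(\w)\sum_{j\ne i} R^{\k,\d k_0}_{D_j D_i}\big(x_j\phi^{(j)}_{\k}\big) = O(\r^2).
\end{align*}
Since $\phi^{(i)}_{\k}$ is the eigenvector of $K^{\k,\d k_0}_{D_i}$ with eigenvalue $\l^{(i)}_{\k}$, the first term simplifies to $(1-\d^2\w^2\xi(\w)\l^{(i)}_{\k})\,x_i\,\phi^{(i)}_{\k}$. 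Dividing through by $(1-\d^2\w^2\xi(\w)\l^{(i)}_{\k})$ and recalling the definition \eqref{A_i} of $\mathscr{A}^{\k}_i(\w,\d)$, the $i$-th equation reads
\begin{align*}
    x_i\,\phi^{(i)}_{\k} = \mathscr{A}^{\k}_i(\w,\d)\sum_{j\ne i} x_j\, R^{\k,\d k_0}_{D_j D_i}\phi^{(j)}_{\k} + O(\r^2).
\end{align*}

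Next I would project each equation onto a suitable test vector to turn it into a genuine scalar equation. The natural choice, given the cyclic structure appearing in \eqref{K}, is to pair the $i$-th equation with $\phi^{(i+1\lfloor N\rfloor)}_{\k}$ — but note that $R^{\k,\d k_0}_{D_i D_{i+1\lfloor N\rfloor}}\phi^{(i)}_{\k}$ lives in $L^2(D_{i+1\lfloor N\rfloor})$, so one should first transport $x_i\phi^{(i)}_{\k}$ from $D_i$ to $D_{i+1\lfloor N\rfloor}$ via $R^{\k,\d k_0}_{D_i D_{i+1\lfloor N\rfloor}}$, then take the inner product with $\phi^{(i+1\lfloor N\rfloor)}_{\k}$. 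Applying $R^{\k,\d k_0}_{D_i D_{i+1\lfloor N\rfloor}}$ to both sides and pairing with $\phi^{(i+1\lfloor N\rfloor)}_{\k}$ gives
\begin{align*}
    x_i\,\langle R^{\k,\d k_0}_{D_i D_{i+1\lfloor N\rfloor}}\phi^{(i)}_{\k},\phi^{(i+1\lfloor N\rfloor)}_{\k}\rangle = \mathscr{A}^{\k}_i(\w,\d)\sum_{j\ne i} x_j\,\langle R^{\k,\d k_0}_{D_j D_i}\phi^{(j)}_{\k},\phi^{(i)}_{\k}\rangle\,\langle R^{\k,\d k_0}_{D_i D_{i+1\lfloor N\rfloor}}\phi^{(i)}_{\k},\phi^{(i+1\lfloor N\rfloor)}_{\k}\rangle + O(\r^2),
\end{align*}
where I have used that $R^{\k,\d k_0}_{D_i D_{i+1\lfloor N\rfloor}}$ is linear and acts only on the $D_i$-component, so it passes through the inner products defining the $x_j$-coefficients as a scalar multiplier. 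Moving everything to one side, the coefficient of $x_i$ is exactly $\mathcal{K}^{\k}(\w)_{ii}$ and the coefficient of $x_j$ ($j\ne i$) is exactly $\mathcal{K}^{\k}(\w)_{ij}$ as defined in \eqref{K}. Hence the system is $\mathcal{K}^{\k}(\w)\,\mathbf{x} = O(\r^2)$, and a nontrivial solution $\mathbf{x}\ne\mathbf{0}$ exists (in the limit) precisely when $\det\big(\mathcal{K}^{\k}(\w)\big)=0$.

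Finally I would address the limiting argument: one must justify that the $O(\r^2)$ (and $O(\d)$) errors, once divided by the relevant quantities, remain controlled as $\r,\d\to0$ with $\d=O(\r)$ and $\r=O(\d)$, so that the exact resonance condition converges to the vanishing of $\det(\mathcal{K}^{\k}(\w))$. This relies on Lemma~\ref{approx}, which already guarantees the error term is uniform for small $\d,\r$ near $0$, together with the non-degeneracy of the leading-order inner products $\langle R^{\k,\d k_0}_{D_i D_{i+1\lfloor N\rfloor}}\phi^{(i)}_{\k},\phi^{(i+1\lfloor N\rfloor)}_{\k}\rangle$ (which are generically nonzero for distinct particles). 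I expect the main obstacle to be precisely this last bookkeeping step — tracking the orders in $\d$ and $\r$ of the operators $K^{\k,\d k_0}_{D_i}$, $R^{\k,\d k_0}_{D_j D_i}$, the eigenvalues $\l^{(i)}_{\k}$ and the factor $\mathscr{A}^{\k}_i(\w,\d)$, and checking that the neglected cross terms (second-order scattering between non-adjacent particles, and the $O(\r^2)$ correction to $u^{\k}_i$) are genuinely lower order after the projection — rather than the algebraic manipulation, which is routine once the right test vectors are identified. I would also note in passing that the cyclic indexing $i+1\lfloor N\rfloor$ is a choice of projection scheme: any fixed permutation without fixed points would work, but the cyclic one keeps the diagonal entries uniform.
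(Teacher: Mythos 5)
Your overall strategy coincides with the paper's --- substitute the rank-one approximation of Lemma~\ref{approx}, reduce \eqref{Resonance pb} to an $N\times N$ scalar system in the unknowns $x_i=\langle u^{\k},\phi^{(i)}_{\k}\rangle$, and read off the determinant condition --- but one step does not go through as written. After dividing the $i$-th row by $1-\d^2\w^2\xi(\w)\l^{(i)}_{\k}$, your coupling term $\mathscr{A}^{\k}_i\sum_{j\ne i}x_j\,R^{\k,\d k_0}_{D_jD_i}\phi^{(j)}_{\k}$ is a general element of $L^2(D_i)$, \emph{not} a multiple of $\phi^{(i)}_{\k}$. When you then apply $R^{\k,\d k_0}_{D_iD_{i+1\lfloor N\rfloor}}$ and pair with $\phi^{(i+1\lfloor N\rfloor)}_{\k}$, the factorisation you invoke,
\begin{align*}
\big\langle R^{\k,\d k_0}_{D_iD_{i+1\lfloor N\rfloor}}\big[R^{\k,\d k_0}_{D_jD_i}\phi^{(j)}_{\k}\big],\phi^{(i+1\lfloor N\rfloor)}_{\k}\big\rangle
=\big\langle R^{\k,\d k_0}_{D_jD_i}\phi^{(j)}_{\k},\phi^{(i)}_{\k}\big\rangle\,\big\langle R^{\k,\d k_0}_{D_iD_{i+1\lfloor N\rfloor}}\phi^{(i)}_{\k},\phi^{(i+1\lfloor N\rfloor)}_{\k}\big\rangle,
\end{align*}
is false in general: linearity lets the scalars $x_j$ pass through, but it does not turn the composition of two integral operators into a product of two inner products; that identity would require $R^{\k,\d k_0}_{D_jD_i}\phi^{(j)}_{\k}$ to lie in the span of $\phi^{(i)}_{\k}$. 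This is exactly the point where the paper's proof uses the pole-pencil decomposition of $(1-\d^2\w^2\xi(\w)K^{\k,\d k_0}_{D_i})^{-1}$: retaining only the singular rank-one part $\langle\cdot,\phi^{(i)}_{\k}\rangle\phi^{(i)}_{\k}/(1-\d^2\w^2\xi(\w)\l^{(i)}_{\k})$ forces the coupling term to appear already projected, as $\mathscr{A}^{\k}_i\sum_{j\ne i}\langle R^{\k,\d k_0}_{D_jD_i}u^{\k}_j,\phi^{(i)}_{\k}\rangle\phi^{(i)}_{\k}$, after which applying $\langle R^{\k,\d k_0}_{D_iD_{i+1\lfloor N\rfloor}}\cdot\,,\phi^{(i+1\lfloor N\rfloor)}_{\k}\rangle$ factorises trivially.

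The repair within your own framework is small: instead of dividing the vector equation and then transporting it to $D_{i+1\lfloor N\rfloor}$, pair the $i$-th row with $\phi^{(i)}_{\k}$ first. Using $K^{\k,\d k_0}_{D_i}\phi^{(i)}_{\k}=\l^{(i)}_{\k}\phi^{(i)}_{\k}$ this gives the scalar system $x_i-\mathscr{A}^{\k}_i\sum_{j\ne i}x_j\langle R^{\k,\d k_0}_{D_jD_i}\phi^{(j)}_{\k},\phi^{(i)}_{\k}\rangle=0$, whose matrix differs from $\mathcal{K}^{\k}(\w)$ only by multiplication of row $i$ by the scalar $\langle R^{\k,\d k_0}_{D_iD_{i+1\lfloor N\rfloor}}\phi^{(i)}_{\k},\phi^{(i+1\lfloor N\rfloor)}_{\k}\rangle$; provided these are nonzero (the nondegeneracy you already flag), the two determinant conditions are equivalent, and your closing remark that the cyclic shift is merely a choice of normalisation then becomes correct. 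Two smaller points: dividing the $O(\r^2)$ remainder by $1-\d^2\w^2\xi(\w)\l^{(i)}_{\k}$ needs care precisely near the resonances where that factor degenerates, and you should either justify or cite the neglect of the holomorphic remainder in the resolvent (the term $R_i[\w]$ in the paper's decomposition), which your direct substitution silently discards.
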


\begin{proof}
    We will provide an outline of the proof of this result, since it follows the exact same reasoning as, for example, the proof of Theorem 2.8 in \cite{alexopoulos2022mathematical}. Using the following pole pencil decomposition,
    \begin{align}\label{pole pencil}
        \Big(1 - \d^2 \w^2 \xi(\w) K^{\k,\d k_0}_{D_i}\Big)^{-1}(\cdot) = \frac{ \langle \cdot, \phi_{\k}^{(i)} \rangle \phi_{\k}^{(i)} }{1 - \d^2 \w^2 \xi(\w) \l^{(i)}_{\k}} + R_i[\w](\cdot), \quad i=1,\dots,N,
    \end{align}
    we get that \eqref{Resonance pb} is equivalent to the system of equations
    \begin{align*}
        u^{\k}_i - \frac{\d^2 \w^2 \xi(\w)}{1 - \d^2 \w^2 \xi(\w) \l^{(i)}_{\k}} \displaystyle\sum\limits_{j=1,j \ne 1}^N \langle R^{\k,\d k_0}_{D_j D_i} u^{\k}_j, \phi_{\k}^{(i)}  \rangle \phi_{\k}^{(i)}  &= 0, \quad \text{ for each } i=1,\dots,N.
    \end{align*}
    The above system is equivalent to
    \begin{align*}
        \langle R^{\k,\d k_0}_{D_i D_{i+1 \lfloor N \rfloor}} u^{\k}_i, \phi^{(i+1 \lfloor N \rfloor)}_{\k}  \rangle - \frac{\d^2 \w^2 \xi(\w)}{1 - \d^2 \w^2 \xi(\w) \l^{(i)}_{\k}} \sum\limits_{j=1,j \ne i}^N \langle R^{\k,\d k_0}_{D_j D_i} u^{\k}_j, \phi^{(i)}_{\k}  \rangle \langle R^{\k,\d k_0}_{D_i D_{i+1 \lfloor N \rfloor}} \phi^{(i)}_{\k}, \phi^{(i+1 \lfloor N \rfloor)}_{\k}  \rangle  = 0.
    \end{align*}
    From Lemma \ref{approx}, we have
    \begin{align*}
        u^{\k}_i \simeq \langle u^{\k}, \phi_{\k}^{(i)} \rangle \phi_{\k}^{(i)}, 
    \end{align*}
    which gives
    \begin{align} 
    \mathcal{K}^{\k}(\w)
    \begin{pmatrix}
        \langle u^{\k}, \phi^{(1)}_{\k} \rangle \\
        \langle u^{\k}, \phi^{(2)}_{\k} \rangle \\
        \vdots \\
        \langle u^{\k}, \phi^{(N)}_{\k} \rangle
    \end{pmatrix}
    =
    \begin{pmatrix}
        0 \\
        0 \\
        \vdots \\
        0
    \end{pmatrix},
    \end{align}
    where
    \begin{align}\label{K2}
        \mathcal{K}^{\k}(\w)_{ij} := 
        \begin{cases}
            \langle R^{\k,\d k_0}_{D_i D_{i+1 \lfloor N \rfloor }} \phi^{(i)}_{\k}, \phi_{\k}^{(i+1\lfloor N \rfloor)} \rangle , &\text{ if $i=j$,} \\
            - \mathcal{A}^{\k}_i(\w,\d) \langle R^{\k,\d k_0}_{D_j D_i} \phi_{\k}^{(j)}, \phi^{(i)}_{\k} \rangle \langle R^{\k,\d k_0}_{D_i D_{i+1 \lfloor N \rfloor }} \phi_{\k}^{(i)}, \phi_{\k}^{(i+1\lfloor N \rfloor)} \rangle, &\text{ if $i \ne j$.}
        \end{cases}
    \end{align}
    Then, for the system to have a non-trivial solution, we require
    \begin{align*}
        \det\Big( \mathcal{K}^{\k}(\w) \Big) = 0,
    \end{align*}
    which gives the desired result.
\end{proof}

\subsection{The two-dimensional case}

In the particular case of a two-dimensional system, it is possible to provide a more detailed and simplified version of the result in Theorem \ref{determinant}. In particular, we will consider the setting where the periodic structure $\mathcal{D}$ is composed of $N\in\mathbb{N}$ resonators, denoted by $\mathcal{D}_i$, $i=1,\dots,N$, which are repeated periodically in the lattice $\L$. Let us recall that in the dimension $d=2$, the $\k$-quasiperiodic Green's function $G^{\k,r}(x)$ is given by
\begin{align}\label{quasi G 2D*}
    G^{\k,k}(x) = -\frac{i}{4} \sum_{m\in\L} H^{(1)}_0\Big( k|x-m| \Big) e^{i m \cdot \k},
\end{align}
where $H^{(1)}_0$ denotes the Hankel function of the first kind of order zero and has the following asymptotic expansion in terms of a small argument:
\begin{align}\label{H*}
    H^{(1)}_0(s) = \frac{2i}{\pi} \sum_{m=0}^{\infty} (-1)^m \frac{s^{2m}}{2^{2m}(m!)^2} \left( \log(\hat{\g}s) - \sum_{j=1}^m \frac{1}{j} \right).
\end{align}

\subsubsection{Integral operators}

We define the integral operators $K^{\k,(-1)}_{D_i}: L^2(D_i) \to L^2(D_i)$ and $K^{\k,(0)}_{D_i}: L^2(D_i) \to 
 L^2(D_i)$ by
\begin{align*}
    &K^{\k,(-1)}_{D_i}[u](x) := -\frac{1}{2\pi} \log(\hat{\g} \d k_0) \int_{D_i} \sum_{m\in\L} e^{im\cdot\k} u(y) \upd y \Big|_{D_i}, \\
    &K^{\k,(0)}_{D_i}[u](x) := -\frac{1}{2\pi} \int_{D_i} \sum_{m\in\L} \log\Big(|x-y-m|\Big) e^{im\cdot\k} u(y) \upd y \Big|_{D_i},
\end{align*}
and the integral operators $R^{\k,(-1)}_{D_i D_j}: L^2(D_i) \to L^2(D_j)$ and $R^{\k,(0)}_{D_i D_j}: L^2(D_i) \to L^2(D_j)$ by
\begin{align*}
    &R^{\k,(-1)}_{D_i D_j}[u](x) := -\frac{1}{2\pi} \log(\hat{\g} \d k_0) \int_{D_i} \sum_{m\in\L} e^{im\cdot\k} u(y) \upd y \Big|_{D_j}, \\
    &R^{\k,(0)}_{D_i D_j}[u](x) := -\frac{1}{2\pi} \int_{D_i} \sum_{m\in\L} \log\Big(|x-y-m|\Big) e^{im\cdot\k} u(y) \upd y \Big|_{D_j},
\end{align*}
for $i=1,\dots,N$. We will provide some results which will help us in the analysis of the problem. 
\begin{definition}
We define the integral operators $M^{\d k_0}_{D_i}$ and $N^{\d k_0}_{D_i D_j}$ for $i,j=1,2$ by
\begin{align}\label{def M and N*}
    M^{\d k_0}_{D_i} := K^{\k,(-1)}_{D_i} + K^{\k,(0)}_{D_i} \ \ \ \text{ and } \ \ \ 
    N^{\d k_0}_{D_i D_j} := R^{\k,(-1)}_{D_i D_j} + R^{\k,(0)}_{D_i D_j}.
\end{align}
\end{definition}
From the asymptotic expansion of the Hankel function in \eqref{H*}, the following holds.
\begin{proposition}
For the integral operators $K^{\k,\d k_0}_{D_i}$ and $R^{\k,\d k_0}_{D_i D_j}$, defined in \eqref{K_Di} and \eqref{R_DiDj} respectively, we can write
\begin{align} \label{M and N*}
    K^{\k,\d k_0}_{D_i} = M^{\k,\d k_0}_{D_i} + O\Big(\d^2 \log(\d)\Big) \ \ \
    \text{ and } \ \ \ 
    R^{\k,\d k_0}_{D_i D_j} = N^{\k,\d k_0}_{D_i D_j}+ O\Big(\d^2 \log(\d)\Big),
\end{align}
as $\delta\to0$ and with $k_0$ fixed.
\end{proposition}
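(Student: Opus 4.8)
The plan is to insert the small-argument expansion \eqref{H*} of $H^{(1)}_0$ directly into the kernels of $K^{\k,\d k_0}_{D_i}$ and $R^{\k,\d k_0}_{D_i D_j}$ and sort the resulting terms by their order in $\d$. By \eqref{K_Di} together with \eqref{quasi G 2D*}, the operator $K^{\k,\d k_0}_{D_i}$ acts on $L^2(D_i)$ with integral kernel $-G^{\k,\d k_0}(x-y)=\tfrac{i}{4}\sum_{m\in\L}H^{(1)}_0(\d k_0|x-y-m|)\,e^{im\cdot\k}$. Relabelling the summation index in \eqref{H*} as $\ell$ to avoid the clash with the lattice index, the $\ell=0$ term contributes $\tfrac{i}{4}\cdot\tfrac{2i}{\pi}\log(\hat{\g}\,\d k_0|x-y-m|)=-\tfrac{1}{2\pi}\bigl[\log(\hat{\g}\,\d k_0)+\log|x-y-m|\bigr]$; summing over $m\in\L$ against the quasiperiodic phase reproduces exactly $K^{\k,(-1)}_{D_i}+K^{\k,(0)}_{D_i}=M^{\k,\d k_0}_{D_i}$. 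The identical split applied to $R^{\k,\d k_0}_{D_i D_j}$ via \eqref{R_DiDj} identifies $N^{\k,\d k_0}_{D_i D_j}$ as its $\ell=0$ part.

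First I would isolate this leading contribution and estimate the remainder, namely the sum of all terms in \eqref{H*} with $\ell\ge1$. Each such term carries a prefactor $(\d k_0)^{2\ell}|x-y-m|^{2\ell}/(2^{2\ell}(\ell!)^2)$ and a logarithmic factor $\log(\hat{\g}\,\d k_0|x-y-m|)-\sum_{j=1}^\ell 1/j$; since $k_0=\w\sqrt{\m_0\ve_0}$ is held fixed, the dominant contribution is $\ell=1$ and is $O(\d^2\log\d)$. For $m=0$ and $x,y\in D_i$ the kernels involved are at worst weakly singular (logarithmic), hence bounded on $L^2(D_i)$, while for $m\neq0$ one has $\operatorname{dist}(D_i,D_i+m)>0$, so those kernels are smooth on $D_i\times D_i$; in both cases the $O(\d^2\log\d)$ bound transfers to the $L^2(D_i)\to L^2(D_i)$ operator norm. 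For $R^{\k,\d k_0}_{D_i D_j}$ with $i\neq j$ the argument is the same and slightly simpler, since disjointness of the resonators gives $\operatorname{dist}(D_i,D_j+m)>0$ for every $m\in\L$ and no singular self-term arises.

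The hard part is not the term-by-term bookkeeping but the convergence of the lattice sum over $m\in\L$: the series $\sum_{m\in\L}H^{(1)}_0(\d k_0|x-y-m|)e^{im\cdot\k}$ is only conditionally convergent — termwise its small-$\d$ limit grows like $\log|m|$, and the $\ell=1$ remainder terms grow like $|m|^2$ — so the manipulations above must be carried out on the quasiperiodic Green's function as properly defined, via its spectral (Fourier-series) representation or an Ewald-type regularisation. I would therefore follow \cite{AD}: re-derive the expansion of the regularised kernel into the $\log(\hat{\g}\,\d k_0)$-singular part plus a smooth $O(\d^2\log\d)$ correction, and invoke the operator-norm remainder estimate established there rather than redeveloping the lattice-summation theory. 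Combining this with the identification of the $\ell=0$ part as $M^{\k,\d k_0}_{D_i}$ (respectively $N^{\k,\d k_0}_{D_i D_j}$) gives \eqref{M and N*}.
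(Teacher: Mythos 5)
Your proposal is correct and follows exactly the route the paper intends: the paper offers no written proof beyond the remark that the result follows ``from the asymptotic expansion of the Hankel function in \eqref{H*}'', and your argument is precisely that expansion carried out explicitly --- the $\ell=0$ term of \eqref{H*} reproduces $K^{\k,(-1)}_{D_i}+K^{\k,(0)}_{D_i}=M^{\k,\d k_0}_{D_i}$ (respectively $N^{\k,\d k_0}_{D_iD_j}$), and the $\ell\ge1$ tail is $O(\d^2\log\d)$ in operator norm for fixed $k_0$. Your additional observation that the lattice sum is only conditionally convergent, so the term-by-term rearrangement must be justified through the regularised (spectral or Ewald) representation of $G^{\k,\d k_0}$ as in \cite{AD}, is a genuine point of care that the paper silently elides; it strengthens rather than departs from the paper's argument.
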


\subsubsection{Spectral results}

We have the following spectral results for the operators $K^{\k,(-1)}_{D_i}$ and $K^{\k,\d k_0}_{D_i}$, for $i=1,\dots,N$.

\begin{lemma}\label{Psi lemma*}
    Let $\n^{(\k)}_{-1,i}$ and $\Psi_{-1,i}^{(\k)}$ denote a non-zero eigenvalue and the associated eigenvector of the operator $K^{\k,(-1)}_{D_i}$, for $i=1,\dots,N$. Then, 
    \begin{align}\label{Psi_-1*}
        \n^{(\k)}_{-1,i} = - \frac{|D_i|}{2\pi}\sum_{m\in\L} e^{im\cdot\k} \ \ \ \text{ and } \ \ \ \Psi_{-1,i}^{(\k)} = \hat{\mathbbm{1}}_{D_i},
    \end{align}
    for $i=1,\dots,N$, where $\hat{\mathbbm{1}}_{D_i} := \frac{\mathbbm{1}_{D_i}}{\sqrt{|D_i|}}$ and $|D_i|$ denotes the volume of $D_i$.
\end{lemma}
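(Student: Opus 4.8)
The plan is to exploit the fact that $K^{\k,(-1)}_{D_i}$ is a rank-one operator. Inspecting its definition, the value $K^{\k,(-1)}_{D_i}[u](x)$ does not depend on $x\in D_i$, so $K^{\k,(-1)}_{D_i}$ maps every $u$ to a scalar multiple of the indicator $\mathbbm{1}_{D_i}$; concretely it acts as $u\mapsto c\,(\sum_{m\in\L}e^{im\cdot\k})\,(\int_{D_i}u(y)\,\upd y)\,\mathbbm{1}_{D_i}$ for the appropriate constant $c$ read off from the definition. Consequently its range is the one-dimensional span of $\mathbbm{1}_{D_i}$, and any non-zero eigenfunction must be proportional to $\mathbbm{1}_{D_i}$. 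Normalising in $L^2(D_i)$ pins down the eigenvector as $\Psi^{(\k)}_{-1,i}=\hat{\mathbbm{1}}_{D_i}=\mathbbm{1}_{D_i}/\sqrt{|D_i|}$ up to a scalar, which is the second identity in \eqref{Psi_-1*}; moreover, a rank-one operator has exactly one non-zero eigenvalue, so ``the'' non-zero eigenvalue referred to in the statement is unambiguous.

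To compute that eigenvalue I would simply apply $K^{\k,(-1)}_{D_i}$ to $\hat{\mathbbm{1}}_{D_i}$: pulling the constant and the lattice sum out of the integral and using $\int_{D_i}\hat{\mathbbm{1}}_{D_i}(y)\,\upd y=\sqrt{|D_i|}$, the output is $\nu^{(\k)}_{-1,i}\,\hat{\mathbbm{1}}_{D_i}$ with $\nu^{(\k)}_{-1,i}=-\tfrac{|D_i|}{2\pi}\sum_{m\in\L}e^{im\cdot\k}$, one factor $\sqrt{|D_i|}$ coming from the integral and a second from re-expressing the constant output against the normalised $\hat{\mathbbm{1}}_{D_i}$ rather than $\mathbbm{1}_{D_i}$. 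The same reasoning applies verbatim for each $i=1,\dots,N$, since the $D_i$ are decoupled in $K^{\k,(-1)}_{D_i}$.

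This computation is entirely elementary, so there is no real obstacle. The only point deserving care is the interpretation of the quasiperiodic lattice sum $\sum_{m\in\L}e^{im\cdot\k}$, which is not absolutely convergent and must be understood in the same distributional (Floquet) sense in which the quasiperiodic Green's function \eqref{quasi G 2D*} is defined; once that convention is fixed, both identities in \eqref{Psi_-1*} follow immediately.
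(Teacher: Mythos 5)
Your proof is correct and follows essentially the same route as the paper's: observe that the output of $K^{\k,(-1)}_{D_i}$ is constant in $x$, so the operator is rank one with eigenfunction $\hat{\mathbbm{1}}_{D_i}$, then apply it to $\hat{\mathbbm{1}}_{D_i}$ to read off the eigenvalue; your extra remarks on the rank-one structure and the conditional convergence of $\sum_{m\in\L}e^{im\cdot\k}$ only make explicit what the paper leaves implicit. One small caveat affecting both your computation and the paper's: the displayed definition of $K^{\k,(-1)}_{D_i}$ carries a prefactor $\log(\hat{\g}\d k_0)$ which, if taken literally, would appear in the eigenvalue as well; the stated formula \eqref{Psi_-1*} (and the expansion in Lemma~\ref{l expansion*}, where the logarithm is factored out separately) shows this prefactor is a typo and should be dropped, exactly as you have done.
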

\begin{proof}
    From the definition of $K^{\k,(-1)}_{D_i}$, for $i=1,\dots,N$, we observe that $K^{\k,(-1)}_{D_i}$ is independent of $x\in D_i$, and so, normalizing on $L^2(D_i)$, we get
    \begin{align*}
        \Psi_{-1,i}^{(\k)} = \hat{\mathbbm{1}}_{D_i}.
    \end{align*}
    Then, the following must hold:
    \begin{align*}
        \begin{aligned}
            \n_{-1,i}^{(\k)} \hat{\mathbbm{1}}_{D_i} = K^{\k,(-1)}_{D_i} [\hat{\mathbbm{1}}_{D_i}] &\Rightarrow \n_{-1,i}^{(\k)} \hat{\mathbbm{1}}_{D_i} = - \frac{|D_i|}{2\pi}\hat{\mathbbm{1}}_{D_i} \sum_{m\in\L} e^{im\cdot\k} \\
            &\Rightarrow \n_{-1,i}^{(\k)} = - \frac{|D_i|}{2\pi}\sum_{m\in\L} e^{im\cdot\k}.
        \end{aligned}
    \end{align*}
    This concludes the proof.
\end{proof}

\begin{lemma}\label{l expansion*}
    Let $\n^{(\k)}_{i}$ denote a non-zero eigenvalue of the operator $M^{\k,\d k_0}_{D_i}$, for $i=1,\dots,N$, in dimension 2. Then, for small $\delta$, it is approximately given by:
    \begin{align}\label{l_e approx d=2*}
        \n^{(\k)}_{i} = \log(\d k_0 \hat{\g})\n^{(\k)}_{-1,i} + \langle K^{\k,(0)}_{D_i} \Psi^{(\k)}_{-1,i}, \Psi^{(\k)}_{-1,i} \rangle + O(\d^2\log(\d)),
    \end{align}
    where $\n^{(\k)}_{-1,i}$ and $\Psi^{(\k)}_{-1,i}$ denote an eigenvalue and the associated eigenvector of the potential $K^{\k,(-1)}_{D_i}$, for $i=1,\dots,N$, respectively.
\end{lemma}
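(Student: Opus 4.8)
The plan is to perturb the eigenvalue problem for $K^{\k,\d k_0}_{D_i}$ around its leading-order operator, using the asymptotic expansion established just above. By the Proposition preceding this lemma, we have the operator-norm expansion $K^{\k,\d k_0}_{D_i} = M^{\k,\d k_0}_{D_i} + O(\d^2\log\d)$ as $\d\to0$, and moreover $M^{\k,\d k_0}_{D_i} = K^{\k,(-1)}_{D_i} + K^{\k,(0)}_{D_i}$ by definition \eqref{def M and N*}. The key observation is that $K^{\k,(-1)}_{D_i}$ carries the $\log(\hat\g\d k_0)$ prefactor and is a rank-one operator (its range is spanned by the constant function $\hat{\mathbbm{1}}_{D_i}$, by the proof of Lemma \ref{Psi lemma*}), while $K^{\k,(0)}_{D_i}$ is an $O(1)$ correction. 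So one treats $K^{\k,(0)}_{D_i}$ as a perturbation of the dominant rank-one term $K^{\k,(-1)}_{D_i}$.

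First I would write the eigenpair of $M^{\k,\d k_0}_{D_i}$ as a perturbation of the eigenpair $(\n^{(\k)}_{-1,i},\Psi^{(\k)}_{-1,i})$ of $K^{\k,(-1)}_{D_i}$ given explicitly by \eqref{Psi_-1*}: set $\n^{(\k)}_i = \n^{(\k)}_{-1,i} + \n'$ and $\Psi^{(\k)}_i = \Psi^{(\k)}_{-1,i} + \Psi'$, where $\Psi'$ is taken orthogonal to $\Psi^{(\k)}_{-1,i}$. Substituting into $M^{\k,\d k_0}_{D_i}\Psi^{(\k)}_i = \n^{(\k)}_i \Psi^{(\k)}_i$ and pairing with $\Psi^{(\k)}_{-1,i}$, the leading correction to the eigenvalue is the diagonal matrix element $\langle K^{\k,(0)}_{D_i}\Psi^{(\k)}_{-1,i},\Psi^{(\k)}_{-1,i}\rangle$, which is exactly the second term in \eqref{l_e approx d=2*}. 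Then I would check that the corrections dropped are genuinely of order $\d^2\log\d$: the error from replacing $K^{\k,\d k_0}_{D_i}$ by $M^{\k,\d k_0}_{D_i}$ is $O(\d^2\log\d)$ by the cited Proposition, and the higher-order terms in standard rank-one (Rayleigh--Schrödinger) perturbation theory — namely $\langle K^{\k,(0)}_{D_i}\Psi',\Psi^{(\k)}_{-1,i}\rangle$ — are $O(1/\log\d)$ relative to the leading logarithmic term, hence subsumed into the error once one is careful about what ``$O(\d^2\log\d)$'' means in context. Finally, inserting $\n^{(\k)}_{-1,i} = \log(\hat\g\d k_0)\cdot\big(-\tfrac{|D_i|}{2\pi}\sum_{m\in\L}e^{im\cdot\k}\big)$ from \eqref{Psi_-1*} reproduces the stated prefactored form, and I would note that $\Psi^{(\k)}_{-1,i}$ may be used in place of $\Psi^{(\k)}_i$ in the inner product at this order since they differ by $O(1/\log\d)$ and $K^{\k,(0)}_{D_i}$ is bounded.

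Since the lemma is explicitly stated to follow the reasoning of Lemma \ref{l expansion} (Proposition in the commented-out section, which is Proposition 2.10-type from the cited works), I would keep the write-up short: cite the earlier expansion, perform the rank-one perturbation computation, and track the orders. The main obstacle — really the only subtle point — is bookkeeping of the error terms: one must reconcile the two competing scales, the algebraic $\d^2$ from the Hankel expansion and the logarithmic $\log\d$ from the leading eigenvalue, and argue that the perturbation-theoretic corrections to the eigenvector contribute at a level absorbed by $O(\d^2\log\d)$. This is standard but deserves a sentence of justification rather than being swept under the rug. Everything else — the explicit form of $\n^{(\k)}_{-1,i}$ and $\Psi^{(\k)}_{-1,i}$, the self-adjointness structure needed to isolate the diagonal matrix element — is already in place from Lemma \ref{Psi lemma*} and the preceding Proposition.
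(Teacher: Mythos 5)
The paper does not actually prove this lemma: its ``proof'' is the single line that the result ``was proved in Lemma~2.6 of \cite{AD}''. Your rank-one perturbation argument is therefore a genuinely different, self-contained route, and its skeleton is the right mechanism: $K^{\k,(-1)}_{D_i}$ is rank one with range spanned by $\hat{\mathbbm{1}}_{D_i}$ and carries the large factor $\log(\hat{\g}\d k_0)$, $K^{\k,(0)}_{D_i}$ is a bounded perturbation, and pairing the eigenvalue equation with $\Psi^{(\k)}_{-1,i}$ isolates the diagonal matrix element $\langle K^{\k,(0)}_{D_i}\Psi^{(\k)}_{-1,i},\Psi^{(\k)}_{-1,i}\rangle$ as the first correction. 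Where the paper buys brevity by outsourcing to \cite{AD}, your version actually exhibits why the formula holds.

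The gap is precisely the point you identify and then dismiss. Writing $\Psi^{(\k)}_i=\Psi^{(\k)}_{-1,i}+\Psi'$ with $\Psi'\perp\Psi^{(\k)}_{-1,i}$ and solving the off-diagonal block gives $\|\Psi'\|=O(1/\log\d)$, so the neglected cross term $\langle K^{\k,(0)}_{D_i}\Psi',\Psi^{(\k)}_{-1,i}\rangle$ is of absolute size $O(1/\log\d)$, and it is generically nonzero (to leading order it is $\n^{-1}\|(I-P)K^{\k,(0)}_{D_i}\Psi^{(\k)}_{-1,i}\|^2$, which vanishes only if $\Psi^{(\k)}_{-1,i}$ happens to be an eigenvector of $K^{\k,(0)}_{D_i}$). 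Since $1/\log\d$ tends to zero far more slowly than $\d^2\log\d$, this term is \emph{not} subsumed by the stated remainder; saying it is small ``relative to the leading logarithmic term'' does not help, because the second term of \eqref{l_e approx d=2*} is itself only $O(1)$, and the claimed remainder is $o(1/\log\d)$. As written, your argument establishes the expansion with remainder $O(1/\log\d)$, not $O(\d^2\log\d)$; closing that distance would require either showing the cross term vanishes to higher order or resumming the Neumann series in $(I-P)K^{\k,(0)}_{D_i}(I-P)/\n$, which yields a full series in powers of $1/\log\d$ rather than a two-term formula. (This imprecision is arguably inherited from the statement itself, via \cite{AD}, but a self-contained proof must confront it rather than fold it into the error symbol.) A minor separate point: in the Section~4.3 definitions the factor $\log(\hat{\g}\d k_0)$ is already built into $K^{\k,(-1)}_{D_i}$, whereas Lemma~\ref{Psi lemma*} records $\n^{(\k)}_{-1,i}$ without it; be explicit about which convention you are using before multiplying by $\log(\d k_0\hat{\g})$ again in the final formula.
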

\begin{proof}
    This was proved in Lemma~2.6 of \cite{AD}.
\end{proof}

Since we have considered identical resonators, the symmetry of the system leads to the following simple result.

\begin{lemma}\label{eqaul n}
     Let $\n^{(\k)}_{-1,i}$ denote a non-zero eigenvalue of the operator $K^{\k,(-1))}_{D_i}$, for $i=1,\dots,N$. Then, it holds that
     \begin{align*}
         \n^{(\k)}_{-1,1} = \n^{(\k)}_{-1,2} = \dots = \n^{(\k)}_{-1,N} =: \n^{(\k)}_{-1}.
     \end{align*}
\end{lemma}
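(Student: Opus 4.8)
The plan is to read off the result directly from the explicit eigenvalue formula established in Lemma~\ref{Psi lemma*}. There it was shown that any non-zero eigenvalue of $K^{\k,(-1)}_{D_i}$ has the closed form
\begin{align*}
    \n^{(\k)}_{-1,i} = - \frac{|D_i|}{2\pi}\sum_{m\in\L} e^{im\cdot\k},
\end{align*}
with associated eigenvector $\hat{\mathbbm{1}}_{D_i}$. The point to make is that the right-hand side depends on the index $i$ only through the volume $|D_i|$: the lattice sum $\sum_{m\in\L} e^{im\cdot\k}$ is a geometric quantity of $\L$ and $\k$ alone, and in particular is the same for every resonator.

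First I would invoke Lemma~\ref{Psi lemma*} for each $i=1,\dots,N$ to obtain the displayed formula. Next I would recall the standing hypothesis of this subsection, namely that the periodic cell $\mathcal{D}$ is composed of $N$ \emph{identical} resonators $D_1,\dots,D_N$; identical here means they coincide up to rigid motions (translations), which leaves Lebesgue measure invariant, so $|D_1| = |D_2| = \dots = |D_N|$. Substituting this common value into the eigenvalue formula yields $\n^{(\k)}_{-1,1} = \n^{(\k)}_{-1,2} = \dots = \n^{(\k)}_{-1,N}$, and one defines this common value to be $\n^{(\k)}_{-1}$. Since $K^{\k,(-1)}_{D_i}$ is a rank-one operator (its integrand is independent of $x$), there is, up to scalar multiples, a single non-zero eigenvalue, so there is no ambiguity in the statement.

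There is essentially no obstacle here: the lemma is a one-line corollary of the explicit spectral computation in Lemma~\ref{Psi lemma*} together with the identical-resonator assumption. The only point worth stating carefully — and the place where a careless reader might stumble — is that ``identical'' must be interpreted as equality up to translation (consistent with how the $D_i$ are placed within the fundamental cell), and that this is exactly the information needed to conclude $|D_i|$ is independent of $i$. Everything else is immediate.
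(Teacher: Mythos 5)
Your proposal is correct and matches the paper's reasoning: the paper offers no formal proof, merely remarking that the result follows from the resonators being identical, and your argument simply makes this precise by reading the eigenvalue $\n^{(\k)}_{-1,i} = -\tfrac{|D_i|}{2\pi}\sum_{m\in\L}e^{im\cdot\k}$ off Lemma~\ref{Psi lemma*} and noting that its only $i$-dependence is through $|D_i|$, which is constant for congruent resonators. Your added observation that $K^{\k,(-1)}_{D_i}$ is rank one, so the non-zero eigenvalue is unambiguous, is a worthwhile clarification the paper omits.
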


\subsubsection{Resonant frequencies}

We will now state a more explicit version of Theorem \ref{determinant}. This is the main result of our analysis of the two-dimensional system, which fully characterises the resonant frequencies of the periodic system.

\begin{proposition}
    The resonance problem, as $\d\to0$ and $\r\to0$, with $\d=O(\r)$ and $\r=O(\d)$, \eqref{Resonance pb} in dimensions $d=2,3$, becomes finding $\w\in\mathbb{C}$ such that
    \begin{align*}
        \det\Big(\mathcal{K}^{\k}(\w)\Big) = 0,
    \end{align*}
    where
    \begin{align}\label{K*}
        \mathcal{K}^{\k}(\w)_{ij} = 
            \begin{cases}
                \langle N^{\k,\d k_0}_{D_i D_{i+1 \lfloor N \rfloor }} \hat{1}_{D_i}, \hat{1}_{D_{(i+1\lfloor N \rfloor)}} \rangle , &\text{ if $i=j$,} \\
                - \mathscr{A}^{\k}_{i}(\w,\d) \langle N^{\k,\d k_0}_{D_j D_i} \hat{1}_{D_j}, \hat{1}_{D_i} \rangle \langle N^{\k,\d k_0}_{D_i D_{i+1 \lfloor N \rfloor }} \hat{1}_{D_i}, \hat{1}_{D_{(i+1\lfloor N \rfloor)}} \rangle, &\text{ if $i \ne j$.}
            \end{cases}
    \end{align}
    Here, $k_0=\omega\sqrt{\mu_0\varepsilon_0}$ and
    \begin{align}\label{A_i*}
        \mathscr{A}^{\k}_{i}(\w,\d) := \frac{\d^2 \w^2 \xi(\w)}{1 - \d^2 \w^2 \xi(\w) \n^{(\k)}}, \quad i=1,...,N.
    \end{align}
    with $\n^{(\k)}$ denoting a non-zero eigenvalue of the potential $M^{\k,\d k_0}_{D_i}$, for $i=1,2,\dots,N$.
\end{proposition}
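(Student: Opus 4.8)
The plan is to specialise the abstract resonance condition of Theorem~\ref{determinant} to the two-dimensional setting by feeding into it the asymptotic expansions of the integral operators together with the explicit spectral data computed in Lemmas~\ref{Psi lemma*}, \ref{l expansion*} and~\ref{eqaul n}. First I would invoke Theorem~\ref{determinant}, which already reduces the resonance problem \eqref{Resonance pb} to $\det(\mathcal{K}^{\k}(\w)) = 0$ with matrix entries built from $K^{\k,\d k_0}_{D_i}$, $R^{\k,\d k_0}_{D_i D_j}$, the eigenpairs $(\l^{(i)}_{\k},\phi^{(i)}_{\k})$ of $K^{\k,\d k_0}_{D_i}$, and the scalars $\mathscr{A}^{\k}_i(\w,\d)$ of \eqref{A_i}. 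All that remains is to substitute the two-dimensional asymptotics into these ingredients and to check that the replacements are consistent to the order at which the determinant condition is meaningful.

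Next I would apply \eqref{M and N*}: up to an $O(\d^2\log\d)$ error in operator norm one may replace each $K^{\k,\d k_0}_{D_i}$ by $M^{\k,\d k_0}_{D_i}$ and each $R^{\k,\d k_0}_{D_i D_j}$ by $N^{\k,\d k_0}_{D_i D_j}$. Consequently the eigenvalue $\l^{(i)}_{\k}$ of $K^{\k,\d k_0}_{D_i}$ is replaced by a non-zero eigenvalue $\n^{(\k)}_i$ of $M^{\k,\d k_0}_{D_i}$, whose expansion is supplied by Lemma~\ref{l expansion*}; Lemma~\ref{eqaul n} together with the assumption of identical resonators then forces $\n^{(\k)}_1 = \dots = \n^{(\k)}_N =: \n^{(\k)}$, which is exactly the common value appearing in \eqref{A_i*}. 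For the eigenvectors, Lemma~\ref{Psi lemma*} gives $\Psi^{(\k)}_{-1,i} = \hat{\mathbbm{1}}_{D_i}$, and the perturbative argument behind Lemma~\ref{l expansion*} shows that the eigenvector of $M^{\k,\d k_0}_{D_i}$ is $\hat{\mathbbm{1}}_{D_i} + O(1/\log\d)$. Hence in every inner product appearing in \eqref{K} (those pairing the eigenvectors $\phi^{(i)}_{\k}$ through the operators $R^{\k,\d k_0}$) I would replace each $\phi^{(i)}_{\k}$ by $\hat{\mathbbm{1}}_{D_i}$ and each $R^{\k,\d k_0}$ by the corresponding $N^{\k,\d k_0}$, obtaining precisely the entries of \eqref{K*}.

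Putting the pieces together: $\mathscr{A}^{\k}_i(\w,\d)$ becomes \eqref{A_i*} once $\l^{(i)}_{\k}$ is replaced by $\n^{(\k)}$, and the matrix $\mathcal{K}^{\k}(\w)$ of Theorem~\ref{determinant} becomes the matrix \eqref{K*}; since the determinant is a polynomial in the entries, the vanishing-determinant condition is preserved to leading order and the claim follows. The same scheme applies in dimension $d=3$ upon using the three-dimensional small-argument expansion of $G^{\k,\d k_0}$ in place of \eqref{H*} and the corresponding operators $M$ and $N$.

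The main obstacle will be the bookkeeping of the error terms rather than any conceptual difficulty: one must verify that the $O(1/\log\d)$ corrections to the eigenvectors and the $O(\d^2\log\d)$ corrections to the operators, once propagated through the products of two inner products in \eqref{K} and then through the $N\times N$ determinant, remain subdominant relative to the terms retained in \eqref{K*} and \eqref{A_i*}, and in particular that $\mathscr{A}^{\k}_i$ stays bounded away from the resonant values of $\w$ where it is singular. Everything else is a direct substitution of the lemmas already established, exactly as in the proof of Theorem~\ref{determinant}.
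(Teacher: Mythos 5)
Your proposal is correct and follows the same route as the paper: the paper's proof is exactly a direct application of Theorem~\ref{determinant} followed by the substitutions \eqref{M and N*} and \eqref{Psi_-1*} (with Lemmas~\ref{l expansion*} and~\ref{eqaul n} supplying the common eigenvalue $\n^{(\k)}$) into the entries of \eqref{K}. Your additional remarks on propagating the $O(\d^2\log\d)$ and $O(1/\log\d)$ errors through the determinant are a sensible elaboration of bookkeeping the paper leaves implicit, but they do not change the argument.
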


\begin{proof}
    This is a direct consequence of Theorem \ref{determinant}. We just have to apply \eqref{M and N*} and \eqref{Psi_-1*} to  \eqref{K} and get
    \begin{align*}
        \mathcal{K}^{\k}(\w)_{ij} = 
            \begin{cases}
                \langle N^{\k,\d k_0}_{D_i D_{i+1 \lfloor N \rfloor }} \hat{1}_{D_i}, \hat{1}_{D_{(i+1\lfloor N \rfloor)}} \rangle , &\text{ if $i=j$,} \\
                - \mathscr{A}^{\k}_{i}(\w,\d) \langle N^{\k,\d k_0}_{D_j D_i} \hat{1}_{D_j}, \hat{1}_{D_i} \rangle \langle N^{\k,\d k_0}_{D_i D_{i+1 \lfloor N \rfloor }} \hat{1}_{D_i}, \hat{1}_{D_{(i+1\lfloor N \rfloor)}} \rangle, &\text{ if $i \ne j$,}
            \end{cases}
    \end{align*}
    which gives the desired result.
\end{proof}

\section{Conclusion}

We have used analytic methods to understand the dispersive nature of photonic crystals fabricated from metals with singular permittivities. In particular, we considered a Drude--Lorentz model inspired by halide perovskites that has poles in the lower complex plane. For a one-dimensional system, we characterised the effect that each feature of this model has on the dispersion relation. In particular, we showed that the introduction of singularities leads to the creation of countably many band gaps near the poles, whereas the introduction of damping smooths out the band gap structure. Finally, we showed how the integral methods developed in \cite{AD,alexopoulos2022mathematical} can be used to extend this theory to multi-dimensional systems.


\section*{Conflicts of interest}

The authors have no conflicts of interest to disclose.

\section*{Data availability}

The code used to perform the numerical simulations presented in this work can be found at \url{https://doi.org/10.5281/zenodo.8055547}. No other data were generated in this project.

\section*{Acknowledgements}

The work of KA was supported by ETH Z\"urich under the project ETH-34 20-2. The work of BD was funded by the Engineering and Physical Sciences Research Council through a fellowship with grant number EP/X027422/1.

\bibliographystyle{abbrv}
\bibliography{references}{}

\end{document}